\documentclass[10 pt, reqno]{amsart}
\usepackage[utf8]{inputenc}

\usepackage{new-style}
\setstretch{1.10} 

\newcommand{\bbone}[1]{\mathbbm{1}\{#1\}}
\newcommand{\set}[1]{\{#1\}}
\newcommand{\defeq}{\coloneqq}
\newcommand{\N}{\mathbb{N}}

\newcommand{\E}{\mathbb{E}}

\newcommand{\eq}{\mathsf{eq}}

\newcommand{\0}{\varnothing}

\newcommand{\Bad}{\mathsf{Bad}}
\newcommand{\Good}{\mathsf{Good}}

\usepackage[backend=biber, sorting=nyt, maxnames=100,backref=true, giveninits=true, sortcites=true]{biblatex}
\addbibresource{references.bib}
\setlength\bibitemsep{.1\baselineskip}

\title{Independent sets and colorings of $K_{t,t,t}$-free graphs}

\author{Abhishek Dhawan$^{\star}$}
\address{$^{\star}$Department of Mathematics, University of Illinois Urbana--Champaign}
\email{adhawan2@illinois.edu}
\thanks{Abhishek Dhawan received funding from NSF RTG grant DMS-1937241.}

\author{Oliver Janzer$^{\dagger}$}
\address{$^{\dagger}$Institute of Mathematics, \'Ecole Polytechnique F\'ed\'erale de Lausanne}
\email{oliver.janzer@epfl.ch}

\author{Abhishek Methuku$^{\star}$}
\email{methuku@illinois.edu}
\thanks{Abhishek Methuku is supported by the UIUC Campus Research Board Award RB25050.}

\date{}

\begin{document}

\begin{abstract}
Alon, Krivelevich, and Sudakov conjectured in 1999 that every $F$-free graph of maximum degree at most $\Delta$ has chromatic number $O(\Delta / \log \Delta)$. This was previously known only for almost bipartite graphs, that is, for subgraphs of $K_{1,t,t}$ (verified by Alon, Krivelevich, and Sudakov themselves), while most recent results were concerned with improving the leading constant factor in the case where $F$ is almost bipartite. We prove this conjecture for all $3$-colorable graphs $F$, i.e. subgraphs of $K_{t,t,t}$, representing the first progress toward the conjecture since it was posed.

A closely related conjecture of Ajtai, Erd\H{o}s, Koml\'os, and Szemer\'edi from 1981 asserts that for every graph $F$, every $n$-vertex $F$-free graph of average degree $d$ contains an independent set of size $\Omega(n \log d / d)$. 
We prove this conjecture in a strong form for all 3-colorable graphs $F$. More precisely, we show that every $n$-vertex $K_{t,t,t}$-free graph of average degree $d$ contains an independent set of size at least $(1 - o(1)) n \log d / d$, matching Shearer's celebrated bound for triangle-free graphs (the case $t = 1$) and thereby yielding a substantial strengthening of it. Our proof combines a new variant of the R\"odl nibble method for constructing independent sets with a Tur\'an-type result on $K_{t,t,t}$-free graphs.

\end{abstract}

\maketitle

\sloppy

\section{Introduction}\label{sec: intro}

\subsection{Background}

The \emph{independence number} $\alpha(G)$ of a graph $G$ is the size of its largest independent set, while the \emph{chromatic number} $\chi(G)$ is the minimum number of colors required to color its vertices so that adjacent vertices receive distinct colors.

It is well known that if a graph $G$ has maximum degree $\Delta(G)$ and average degree $d(G)$, then $\alpha(G) \ge \frac{n}{d(G) + 1}$ and $\chi(G) \le \Delta(G) + 1$. These are commonly referred to as the \emph{greedy bounds}. A central area of research in graph theory is to understand under what structural constraints these bounds can be improved. In this paper, we focus on the case of $F$-free graphs—graphs that contain no subgraph (not necessarily induced) isomorphic to a fixed graph $F$.

Ajtai, Koml\'os, and Szemer\'edi first studied this problem for $F = K_3$. They showed \cite{ajtai1980note} that an $n$-vertex $K_3$-free graph $G$ with average degree $d$ satisfies $\alpha(G) = \Omega\!\left(\frac{n \log d}{d}\right)$. For $r \ge 3$, the same set of authors, together with Erd\H{o}s, proved \cite{ajtai1981turan} that $\alpha(G) = \Omega\!\left(\frac{n \log(\log d / r)}{r d}\right)$ for $K_r$-free graphs $G$. In light of these results, they posed the following conjecture:

\begin{conjecture}[{Ajtai--Erd\H{o}s--Koml\'os--Szemer\'edi \cite{ajtai1981turan}}]\label{conj: AEKS}
   For every fixed graph $F$, there exists a constant $c_1(F) > 0$ such that every $n$-vertex $F$-free graph $G$ with average degree at most $d$ satisfies\footnote{Throughout this paper, we use the asymptotic notation $o_x(\cdot)$ as $x \to \infty$.
We ignore the subscript when $x$ is clear from context.}
\[
\alpha(G) \ge (c_1(F) - o_d(1)) \frac{n \log d}{d}.
\]
\end{conjecture}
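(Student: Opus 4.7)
The plan is to prove the conjecture in the $3$-colorable case, that is, when $F$ is a subgraph of $K_{t,t,t}$, by adapting the semi-random (R\"odl nibble) method to this setting, as hinted at in the abstract. After a standard reduction (pass to the subgraph induced by vertices of degree at most $2d$, losing only a constant factor in $n$, and then regularize), it suffices to prove that an $n$-vertex almost-$d$-regular $K_{t,t,t}$-free graph contains an independent set of size $(1-o(1))\frac{n\log d}{d}$, matching Shearer's bound for $t=1$.

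The core of the argument is an iterative nibble. At each round one samples a small fraction of the remaining vertices, keeps only those whose closed neighborhood contains no other sampled survivor (under a suitable selection rule), adds the survivors to the independent set $I$, and deletes $I$ together with its neighbors. Following the entropy/potential method of Shearer, one tracks the ratio $n_i/d_i$ after round $i$ via a differential-equation heuristic; if this ratio is preserved up to a $1+o_d(1)$ factor per round, then iterating until the average degree drops below a constant yields $|I|\geq(1-o(1))\frac{n\log d}{d}$.

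The $K_{t,t,t}$-freeness would enter through a Tur\'an-type input. The key local observation is that the common neighborhood of any $t$ vertices is $K_{t,t}$-free, hence by K\H{o}v\'ari--S\'os--Tur\'an contains $O(n^{2-1/t})$ edges; more refined is a bound on the number of copies of $K_{t,t}$ inside any single neighborhood $N(v)$. This should translate into a bound on the number of pairs of neighbors of $v$ that jointly ``block'' $v$ during a nibble step, so that the survival probability of $v$ in one round matches the triangle-free case up to lower-order terms, thereby preserving Shearer's constant $1$.

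The main obstacle is precisely this matching of the leading constant. A vertex's neighborhood in a $K_{t,t,t}$-free graph can contain up to $\Theta(d^{2-1/t})$ edges, far more than in the triangle-free case, so a naive analysis loses a constant factor at each round and destroys the bound after $\sim \log d$ iterations. Overcoming this likely requires a refined nibble whose selection rule is ``edge-based'' (e.g.\ sampling a vertex together with a random neighbor and retaining only under a joint condition on their link), with the Tur\'an-type estimate used to control the codegree-style quantities that arise; the delicate concentration analysis needed to keep the per-round loss at $1+o_d(1)$ over many rounds is where the bulk of the technical work will lie.
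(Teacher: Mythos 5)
Your high-level intuition (R\"odl nibble plus a Tur\'an-type input, aiming for the constant $1$) is in the right direction, but two of your concrete ingredients are wrong or would break the argument, and the paper's key new idea is absent.

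First, your opening ``standard reduction'' --- passing to the subgraph of vertices of degree at most $2d$ --- already loses a constant factor in $n$ and caps the achievable constant at $1/2$ (in fact the paper's discussion around equation~\eqref{lossoffactor4} explains that a naive cleaning step in the nibble costs a multiplicative factor of $4$ overall). The central novelty of the paper is precisely how to avoid this: instead of discarding high-degree vertices wholesale up front, the procedure interleaves single-vertex \emph{cleaning} steps (remove one vertex of degree $>(1+\eps/10)D_i$ at a time) with nibble steps, and carefully tracks the cumulative multiplicative gain $R_i$ in the ratio $N_i/D_i$ contributed by cleaning (Corollary~\ref{cor:one cleaning step ratio}, Lemma~\ref{lemma: lb on n/d and n}). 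The termination conditions are then engineered so that if many cleaning steps occurred the greedy bound on the remaining graph already suffices, while if few did the nibble iterations produce the set. Without something like this you cannot hope to reach the constant $1$.

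Second, your local-sparsity premise is false. The neighborhood $N(v)$ of a single vertex in a $K_{t,t,t}$-free graph need \emph{not} be $K_{t,t}$-free --- $v$ together with a $K_{t,t}$ in $N(v)$ only makes a $K_{1,t,t}$ --- so $G[N(v)]$ can be a complete bipartite graph with $\Theta(d^2)$ edges, not $O(d^{2-1/t})$. (Your statement that the common neighborhood of $t$ independent vertices is $K_{t,t}$-free is true but does not give local sparsity for single neighborhoods.) The correct and usable fact, which is the paper's Lemma~\ref{lemma:fewtriangles}, is a \emph{global} bound: every $K_{t,t,t}$-free graph on $n$ vertices with maximum degree $\Delta$ has at most $O(n\Delta^{2-1/t^2})$ triangles in total. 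This is exactly what is needed to control the codegree-type quantities in the concentration of $e(\tilde H)$ (Lemma~\ref{lemma: expectation of edges tilde H}), via a good/bad edge decomposition, and it means no ``edge-based'' selection rule is needed --- the usual vertex nibble with equalizing coin flips suffices once degrees are controlled by cleaning.

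Finally, a minor point: the statement you are proving is the Ajtai--Erd\H{o}s--Koml\'os--Szemer\'edi conjecture itself, which remains open in general; you correctly restrict to $3$-colorable $F$, which is the paper's Theorem~\ref{theorem: iset}, but this restriction should be stated explicitly at the outset rather than only implicitly.
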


If true, the above conjecture is optimal up to the constant $c_1(\cdot)$, as there exist $n$-vertex, $d$-regular graphs of arbitrarily high girth whose independence number is at most $(2 + o_d(1)) \frac{n \log d}{d}$~\cite{BollobasIndependence}. A breakthrough of Shearer~\cite{shearer1995independence} established in 1995 that every $K_r$-free graph $G$ of average degree $d$ satisfies $\alpha(G) = \Omega\!\left(\frac{n \log d}{r d \log \log d}\right)$, which remains the best general bound known to date  (see also~\cite{bernshteyn2019johansson, Molloy, DKPS, dhawan2025bounds, bonamy2022bounding} and the survey~\cite[Section~3]{KangKelly2023nibble}). For the special case of triangle-free graphs $G$, Shearer proved the substantially stronger estimate $\alpha(G) \ge (1 - o_d(1)) \frac{n \log d}{d}$ in his celebrated 1983 paper~\cite{Shearer1983}, which in particular implies that $c_1(F) \ge 1$ when $F$ is a triangle. The largest class of graphs for which Conjecture~\ref{conj: AEKS} is known to hold consists of the so-called \emph{almost bipartite graphs}, that is, subgraphs of $K_{1,t,t}$ for $t \in \mathbb{N}$. 
In 1999, Alon, Krivelevich, and Sudakov deduced the bound $c_1(K_{1,t,t}) = \Omega(1/t)$ from a stronger result concerning the chromatic number.

\begin{theorem}[{Alon--Krivelevich--Sudakov \cite[Corollary~2.4]{AKSConjecture}}]\label{theorem: aks}
    Let $G$ be a $K_{1, t, t}$-free graph of maximum degree at most $\Delta$.
    Then, $\chi(G) = O\left(\frac{t\Delta}{\log \Delta}\right).$
\end{theorem}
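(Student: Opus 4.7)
The plan is to reduce the theorem to a general chromatic-number bound for locally sparse graphs. First, observe that the $K_{1,t,t}$-freeness of $G$ is equivalent to requiring that for every vertex $v \in V(G)$ the induced subgraph $G[N(v)]$ is $K_{t,t}$-free, since a copy of $K_{t,t}$ inside $N(v)$, together with $v$, would span a $K_{1,t,t}$. By the K\H{o}v\'ari--S\'os--Tur\'an theorem applied to $G[N(v)]$,
\[
    e(G[N(v)]) \le C_t \, \Delta^{2 - 1/t}
\]
for some constant $C_t$ depending only on $t$. Equivalently, setting $f \defeq \Delta^{1/t} / C_t$, every neighborhood spans at most $\Delta^2 / f$ edges, and $\log f = \tfrac{1}{t}\log \Delta - O_t(1)$.

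The second ingredient is the \emph{locally sparse coloring lemma}: any graph of maximum degree at most $\Delta$ whose neighborhoods each span at most $\Delta^2 / f$ edges admits a proper coloring using $O(\Delta / \log f)$ colors, provided $f$ is at least a sufficiently large power of $\log \Delta$. Plugging in $f = \Omega(\Delta^{1/t})$ yields
\[
    \chi(G) = O\!\left(\frac{\Delta}{\log f}\right) = O\!\left(\frac{t\Delta}{\log \Delta}\right),
\]
which is exactly the bound claimed.

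The main difficulty lies in the locally sparse coloring lemma itself. The standard route is the semi-random (nibble) method: initialise every vertex with the palette $\{1, \dots, k\}$ for $k = \Theta(\Delta / \log f)$, and run $O(\log f)$ rounds in which each uncolored vertex activates a color chosen uniformly at random from its current list, keeping it only if no neighbor activates the same color; after each round, the list of every uncolored vertex is pruned by deleting colors already successfully assigned to a neighbor. The crucial gain comes from local sparsity: because $G[N(v)]$ has few edges, many pairs of vertices in $N(v)$ can safely retain the same color, so the uncolored degree at $v$ shrinks by a considerably larger factor than its list size. Tracking the ratio of list size to uncolored degree via Talagrand's inequality (as in the arguments of Johansson and of Alon--Krivelevich--Sudakov) or via the entropy-compression and Lov\'asz Local Lemma framework popularised by Molloy then shows that this ratio is preserved round by round. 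A short greedy finish on the residual graph, whose maximum degree has been driven below $k$, completes the coloring within the promised $O(\Delta / \log f)$ colors.
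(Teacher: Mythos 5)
Your reduction is exactly the route the paper describes: $K_{1,t,t}$-freeness forces each neighborhood $G[N(v)]$ to be $K_{t,t}$-free, K\H{o}v\'ari--S\'os--Tur\'an then gives $\Delta^{\Omega(1/t)}$-local sparsity, and the Alon--Krivelevich--Sudakov bound $\chi(G) = O(\Delta/\log f)$ for $f$-locally sparse graphs finishes the job. This matches the paper's (cited) argument, so the proposal is correct and takes essentially the same approach.
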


They obtained the above result as a corollary of a more general statement concerning \emph{locally sparse} graphs. More precisely, a graph $G$ with maximum degree $\Delta$ is said to be \emph{$f$-locally sparse} if, for every vertex $v$, the subgraph $G[N(v)]$ spans at most $\Delta^2 / f$ edges. They proved that such graphs satisfy $\chi(G) = O(\Delta / \log f)$ \cite[Theorem~1.1]{AKSConjecture}. This immediately yields Theorem~\ref{theorem: aks} since every $K_{1,t,t}$-free graph with maximum degree $\Delta$ is $\Delta^{\Omega(1/t)}$-locally sparse by the K\H{o}v\'ari--S\'os--Tur\'an theorem \cite{KovariSosTuran}. 

Motivated by Theorem~\ref{theorem: aks}, Alon, Krivelevich, and Sudakov proposed the following strengthening of Conjecture~\ref{conj: AEKS}:

\begin{conjecture}[{Alon--Krivelevich--Sudakov \cite{AKSConjecture}}]\label{conj: AKS}
    For every fixed graph $F$, there exists a constant $c_2(F) > 0$ such that every $F$-free graph $G$ with maximum degree at most $\Delta$ satisfies 
\[
\chi(G) \le (c_2(F) + o_{\Delta}(1)) \frac{\Delta}{\log \Delta}.
\]
\end{conjecture}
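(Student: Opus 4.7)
The plan is to focus on the case addressed by this paper: $F$ a $3$-colorable graph, i.e., $F \subseteq K_{t,t,t}$ for some $t \in \N$, which is the first open case beyond the almost-bipartite regime of Theorem~\ref{theorem: aks}. I would split the proof into an independent-set statement and a standard reduction from independent sets to coloring. Specifically, if every $n$-vertex $K_{t,t,t}$-free graph of average degree $d$ has an independent set of size $\Omega_t(n\log d / d)$, then iteratively extracting such sets (noting that any induced subgraph of a $K_{t,t,t}$-free graph is $K_{t,t,t}$-free, and controlling the degree as we remove vertices) yields a proper coloring using $O_t(\Delta/\log\Delta)$ colors, as required by the conjecture.

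The core task is therefore the independent set lower bound. I would attack it by a Rödl-nibble/random greedy procedure: in each round, activate each remaining vertex independently with a small probability $p$, declare a vertex \emph{good} if it is activated and none of its neighbors is activated, add good vertices to $I$, and delete them and their neighbors from $G$. The analysis tracks the expected drop in $|V|$ and the expected "effective degree" $\sum_v (1-p)^{\deg(v)}$. The key to Shearer's $(1-o_d(1))n\log d / d$ bound in the triangle-free case ($t=1$) is that $G[N(v)]$ is empty, so the events "$u\in I$" for $u\in N(v)$ are essentially independent given $v\notin I$, which cleanly relates the survival probability of $v$ to $e^{-pd}$ with the sharp leading constant.

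For general $t$, the immediate obstacle is that $G[N(v)]$ can be very dense: if $G$ is $K_{t,t,t}$-free then $G[N(v)]$ is only guaranteed to be $K_{t-1,t,t}$-free, which for $t\ge 2$ allows $\Theta(\Delta^2)$ edges (e.g.\ a balanced bipartite graph in $N(v)$). Hence one cannot reuse the triangle-free argument verbatim. My proposal is to pair the nibble with a Turán/supersaturation-type input about $K_{t,t,t}$-free graphs: for every vertex $v$ and any subset $T\subseteq N(v)$ of size linear in $\Delta$, the induced bipartite graph between a small blow-up of $T$ and the rest of $N(v)$ cannot be too dense without forcing a copy of $K_{t,t,t}$ in $G$ via Kővári--Sós--Turán applied three times. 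This would translate into an upper bound on the "codegree-type" quantity that controls the correlation between neighbors of $v$ being activated, allowing the analysis to mimic the triangle-free calculation up to a multiplicative loss depending only on $t$.

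The main difficulty I expect is calibrating this so that the loss is absorbed into the $o_d(1)$ term, matching Shearer's leading constant $1$ claimed in the abstract, rather than merely yielding $c_2(K_{t,t,t})=\Omega(1/t)$ as one would get from a crude Alon--Krivelevich--Sudakov-style local-sparsity argument. The naive nibble loses a constant each round from the extra correlations induced by the dense neighborhoods, and these losses compound. I anticipate needing a \emph{new} nibble variant (as flagged by the abstract) that couples the random activation with a Turán-type pruning step—perhaps activating vertices together with a preferred local ordering, or weighting the activation probabilities by a substructure-count—so that the inductive control passes only through parameters that are genuinely bounded by the Kővári--Sós--Turán estimate for $K_{t,t,t}$ and not by the trivial maximum degree in $N(v)$.
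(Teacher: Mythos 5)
Your proposed reduction from the independent set bound to the chromatic number bound does not work, and this is the central gap. The standard ``iteratively extract independent sets'' argument shows only that if every induced subgraph $H \subseteq G$ satisfies $\alpha(H) \ge c\,|V(H)|\log\Delta/\Delta$, then $\chi(G) = O\big(\tfrac{\Delta}{\log\Delta}\cdot\log n\big)$: after $k$ rounds the vertex count has shrunk by a factor $(1-c\log\Delta/\Delta)^k$, so one needs on the order of $\tfrac{\Delta\log n}{\log\Delta}$ rounds when $n$ is much larger than $\Delta$. This matches $O(\Delta/\log\Delta)$ only when $n = \mathrm{poly}(\Delta)$, which we cannot assume. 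Indeed, this is exactly why Johansson's coloring theorem was a separate achievement beyond Shearer's independence result, and why the paper does \emph{not} deduce Theorem~\ref{theorem: coloring} from Theorem~\ref{theorem: iset}. Instead, the paper proves the coloring bound by a self-contained argument: it establishes a \emph{global} Tur\'an-type bound (Lemma~\ref{lemma:fewtriangles}) that every $n$-vertex $K_{t,t,t}$-free graph of maximum degree $\Delta$ has at most $O_t(n\Delta^{2-1/t^2})$ triangles, deduces the existence of a vertex ordering in which each \emph{left-neighborhood} is sparse (Lemma~\ref{lemma:find degeneracy order}), adapts the Alon--Krivelevich--Sudakov local lemma argument to this one-sided sparsity to partition $V(G)$ into $O(t^4\Delta^{1-1/(10t^2)})$ parts each inducing a triangle-free graph of small maximum degree (Lemma~\ref{lemma: partition triangle free}), and finally applies Johansson's theorem to each part.

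There is a second problem with your plan for the independence number. You propose a \emph{local} Tur\'an/K\H{o}v\'ari--S\'os--Tur\'an input asserting that within any $N(v)$ the bipartite density must be small. But a $K_{t,t,t}$-free graph can have $G[N(v)]$ equal to a complete balanced bipartite graph on $\Delta$ vertices for every $v$ (take the ``blown-up'' tripartite structure avoiding $K_{t,t,t}$ across parts). So no pointwise bound on $e(G[N(v)])$ is available; the correct statement is only about the \emph{total} triangle count across all of $G$, which is much weaker than local sparsity but turns out to suffice. Finally, your instinct that a new nibble variant is needed is right, but the ingredient the paper actually introduces is not a ``Tur\'an-type pruning'' coupled to the random activation; it is an alternation between a \emph{cleaning} step (delete one high-degree vertex and track the multiplicative gain in the ratio $N_i/D_i$) and a \emph{nibble} step with equalizing coin flips (applied only when $\Delta(H_i)\le(1+\eps/10)d(H_i)$), combined with the global triangle bound to control codegrees in expectation. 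Without that alternation and the $R_i$ bookkeeping, the cleaning losses do compound as you fear, and one only recovers $c_1(K_{t,t,t}) \geq 1/4$ rather than the sharp constant $1$.
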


Johansson extended Shearer's result on the independence number of $K_{r}$-free graphs to colorings, showing that such graphs satisfy $\chi(G) = O\left(\frac{r\Delta\log\log\Delta}{\log\Delta}\right)$ \cite{J96-Kr} (see \cite{Molloy, bernshteyn2019johansson, dhawan2025bounds} for more recent proofs with improvements in terms of the leading constant factor; see also \cite{bonamy2022bounding, DKPS} for an asymptotic improvement in the regime $r = \omega\left(\frac{\log \Delta}{\log \log \Delta}\right)$).
While almost-bipartite graphs remain the only class of graphs for which Conjectures~\ref{conj: AEKS}~and~\ref{conj: AKS} are known to hold, there has been progress in terms of the value of the constant factors involved for special graph classes.
We highlight some of these bounds in Table~\ref{table:bounds}.

{
    \renewcommand{\arraystretch}{1.3}
    \begin{table}[htb!]
        \caption{Known bounds on $c_2(F)$.}\label{table:bounds}
        \begin{tabular}{| l || l || l |}
            \hline
            $F$ & $c_2(F)$ & References\\\hline\hline
            forest & $0$ & \\\hline
            not forest & $\geq 1/2$ & Bollob\'as \cite{BollobasIndependence}
            \\\hline
            \multirow{3}{*}{$K_3$} & finite & Johansson \cite{Joh_triangle} \\\cline{2-3}
            & $\leq 4$ & Pettie--Su \cite{PS15} \\\cline{2-3}
            & $\leq 1$ & Molloy \cite{Molloy}\\\hline
            cycle & $\leq 1$ & Davies--Kang--Pirot--Sereni \cite{DKPS}\\\hline
            fan & $\leq 1$ & Davies--Kang--Pirot--Sereni \cite{DKPS}\\\hline
            bipartite & $\leq 1$ & Anderson--Bernshteyn--Dhawan \cite{AndersonBernshteynDhawan}\\\hline
            \multirow{3}{*}{$K_{1,t,t}$} & $O(t)$ & Alon--Krivelevich--Sudakov \cite{AKSConjecture} \\\cline{2-3}
            & $\leq t$ & Davies--Kang--Pirot--Sereni \cite{DKPS}\\\cline{2-3}
            & $\leq 4$ & Anderson--Bernshteyn--Dhawan \cite{anderson2025coloring}\\\hline
        \end{tabular}
    \end{table}	
}

As every $n$-vertex graph of average degree $d$ contains an $(n/2)$-vertex induced subgraph of maximum degree at most $2d$, it follows that $c_1(F) \geq 1/(4c_2(F))$.
In particular, all of the results in Table~\ref{table:bounds} immediately imply lower bounds on $c_1(F)$ as well.
Our main results in this paper prove Conjectures~\ref{conj: AEKS}~and~\ref{conj: AKS} for all $3$-colorable graphs $F$ (we defer the formal statements to \S\ref{subsection: main results}).

\begin{theorem}
\label{theorem: general}
    Let $F$ be a $3$-colorable graph. Then, for any $F$-free graph $G$ of maximum degree $\Delta$ and average degree $d$, we have
    \[\alpha(G) \geq (1-o(1))\frac{n\log d}{d}, \qquad \text{and} \qquad \chi(G) = O\left(|V(F)|^6\frac{\Delta}{\log \Delta}\right).\]
    In particular, $c_1(F) \geq 1$ and $c_2(F) = O(|V(F)|^6)$.
\end{theorem}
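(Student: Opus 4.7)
Since $F$ is $3$-colorable, $F$ is a subgraph of $K_{t,t,t}$ with $t = |V(F)|$, so every $F$-free graph is $K_{t,t,t}$-free and I may assume $F = K_{t,t,t}$. The strategy combines a Tur\'an-type structural result for $K_{t,t,t}$-free graphs with two separate arguments: a locally-sparse coloring argument for the chromatic number bound, and a new variant of the R\"odl nibble for the sharp independence number bound.

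The key structural ingredient I would prove is that in a $K_{t,t,t}$-free graph $G$ of maximum degree $\Delta$, the total count $\sum_{v} \#\{K_{t,t} \subseteq G[N(v)]\}$ is at most $n \Delta^{2t - 1 - \delta(t)}$ for some $\delta(t) > 0$. The proof is a double-counting pigeonhole: if this sum is too large, then some fixed $K_{t,t}$ lies in the common neighborhood of $t$ different vertices, producing a forbidden $K_{t,t,t}$; a K\H{o}v\'ari--S\'os--Tur\'an estimate for the number of $K_{t,t}$'s in $G$ makes this quantitative. An immediate consequence is that the set $\Bad$ of vertices whose neighborhood contains more than $\Delta^{2 - 1/t - \varepsilon}$ edges satisfies $|\Bad| = o(n)$, so $G[V(G) \setminus \Bad]$ is locally sparse in the sense of Alon--Krivelevich--Sudakov.

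For the chromatic number bound, I would apply the AKS locally sparse coloring theorem to $G[V(G) \setminus \Bad]$, obtaining $O(t^c \Delta / \log \Delta)$ colors, and handle $\Bad$ either by iterating the decomposition (observing that the induced subgraph on $\Bad$ is itself $K_{t,t,t}$-free with much smaller effective order) or by a greedy argument exploiting $|\Bad| \ll n$. Tracking the exponents of $t$ through the Tur\'an bound, the AKS constant, and the recursion gives the claimed $|V(F)|^6$ factor.

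For the independence number bound with the sharp leading constant $1$, I would run a R\"odl-type nibble tailored to $K_{t,t,t}$-free graphs: iteratively sample a random independent set in the surviving subgraph and remove its closed neighborhood, for $\Theta(\log d)$ rounds, aiming to match Shearer's triangle-free $(1-o(1)) n \log d / d$ bound. In Shearer's argument, triangle-freeness forces each neighborhood to be an independent set, which makes the relevant entropy/variance estimates simplify cleanly; in our setting, the structural lemma must be used at every stage to show that the ``non-triangle-free-like'' part of each neighborhood contributes only an $o(1)$ loss in the nibble step. The main obstacle is ensuring this control holds uniformly across all $\Theta(\log d)$ scales: after each stage one must verify that the surviving graph still satisfies a $K_{t,t,t}$-free-with-appropriately-controlled-maximum-degree invariant, so that the Tur\'an input can be reapplied, and that the cumulative loss telescopes to $o(1)$ rather than to a constant. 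This is where the new variant of the R\"odl nibble must do real work.
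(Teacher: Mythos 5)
Your reduction to $K_{t,t,t}$-free graphs with $t = |V(F)|$ is exactly the paper's first move, and the three-part strategy---a Tur\'an-type structural lemma, an AKS-style argument for coloring, and a nibble for independence---is in the right spirit. But there are genuine gaps. First, the structural lemma you state is false: disjoint copies of $K_{1,\Delta/2,\Delta/2}$ are $K_{t,t,t}$-free for every $t\ge 2$, have maximum degree $\Delta$, and contain $\Theta(n\Delta^{2t-1})$ labeled copies of $K_{1,t,t}$, so no power saving $n\Delta^{2t-1-\delta(t)}$ with fixed $\delta(t)>0$ is available. The correct pigeonhole (every $K_{t,t}$ has at most $t-1$ common neighbors, else a $K_{t,t,t}$ appears) only gives $O(t\,n\Delta^{2t-1})$, from which $|\Bad|=o(n)$ does not follow. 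Moreover your threshold $\Delta^{2-1/t-\varepsilon}$ sits \emph{below} the K\H{o}v\'ari--S\'os--Tur\'an threshold for a $\Delta$-vertex graph, so a neighborhood with that many edges need not contain any $K_{t,t}$ at all---the $K_{t,t}$-count says nothing about such vertices. The structural input the paper actually proves, and the one that works, is a \emph{triangle} bound (Lemma~\ref{lemma:fewtriangles}): a $K_{t,t,t}$-free graph of maximum degree $\Delta$ has at most $Cn\Delta^{2-1/t^2}$ triangles, whence $\sum_v e(G[N(v)])=3T(G)$ and Markov directly give that all but $o(n)$ vertices have sparse neighborhoods. Second, even granting local sparsity off $\Bad$, handling $\Bad$ by greedy or by recursion fails: greedy gives $\chi(G[\Bad])\le\Delta+1$, a $\log\Delta$ factor too large (small $|\Bad|$ does not tame $\Delta(G[\Bad])$), and recursion uses a fresh palette per level so you would need the number of levels to be $O(1)$, which is not automatic. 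The paper sidesteps this by producing a vertex \emph{ordering} in which every left-neighborhood is sparse (delete a low-triangle vertex, recurse, append it last; Lemma~\ref{lemma:find degeneracy order}) and then running an AKS-style partition argument relative to left-neighborhoods, so there is no leftover bad set.

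For the independence number, your sketch is too vague to carry the load. The hard part is matching Shearer's $(1-o(1))n\log d/d$ in terms of \emph{average} degree, and a standard nibble with the usual cleaning step (discard all vertices above twice the average degree) loses a factor of $4$ in the leading constant. The paper's new mechanism is to remove one high-degree vertex at a time, track the multiplicative gain $R_i$ in the ratio $N_i/D_i$ that each such removal creates (Corollary~\ref{cor:one cleaning step ratio}), and use a trichotomy in the termination analysis: if $R_T$ is large or $D_T$ is small then a greedy independent set in the remaining graph already wins, and otherwise enough nibble rounds have run for the collected independent sets to reach the target. Nothing in your sketch engages with the interaction between vertex count and average degree during cleaning, which is precisely where the leading constant $1$ comes from; ``the cumulative loss telescopes to $o(1)$'' is the conclusion to be proved, not an argument.
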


This represents the first progress on both Conjecture~\ref{conj: AEKS} and Conjecture~\ref{conj: AKS} in more than 25 years. Furthermore, we recover—and substantially strengthen—the celebrated result of Shearer from 1983 for triangle-free graphs ($t = 1$) \cite{shearer1983note}, providing an alternative proof of his bound. For $F \subseteq K_{1, t, t}$, our result improves the lower bound on $c_1(F)$ established by Anderson, Bernshteyn, and
the first-named author of this manuscript 
\cite[Corollary~1.7]{anderson2025coloring}, increasing it from $1/16$ to $1$.\footnote{The referenced result is stated in terms of the maximum degree; however, the bound $1/16$ follows immediately as a corollary.} 
Indeed, our result not only establishes that $c_1(F) \ge 1$ for all graphs $F$ for which Conjecture~\ref{conj: AEKS} was previously known to hold, but also extends this conclusion to every $3$-colorable graph~$F$. Prior to our work, this was known only in the special case $F = K_3$, as proved in Shearer’s seminal 1983 paper~\cite{shearer1983note}. 

We note that the case $F = K_{2,2,2}$ has attracted particular recent interest: Ross~Kang posed this specific case as an open problem during the 2024 session of the Sparse Graphs Coalition~\cite{sparsegraphs2024}. Our result (Theorem~\ref{theorem: general}) therefore resolves this question as well.

\subsection{Main results}\label{subsection: main results}

In this section, we include the formal statements of our main results.
We begin with our result on the chromatic number.

\begin{theorem}\label{theorem: coloring}
    For all $t \in \N$, there exists $\Delta_0 \in \N$ such that the following holds for all $\Delta \geq \Delta_0$.
    Let $G$ be a $K_{t, t, t}$-free graph of maximum degree $\Delta$.
    Then
    \[\chi(G) = O\left(\frac{t^6\Delta}{\log \Delta}\right).\]
\end{theorem}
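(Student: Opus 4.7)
The approach is to adapt the iterative random list-coloring (R\"odl nibble / Johansson--Molloy) framework, with a new structural input in the form of a Tur\'an-type property of $K_{t,t,t}$-free graphs. The plan is to prove the stronger list-coloring statement: every $K_{t,t,t}$-free graph of maximum degree $\Delta$ is $L$-choosable for $L \defeq C t^6 \Delta / \log \Delta$ with $C$ an absolute constant, from which the chromatic bound follows.

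In the nibble framework, each vertex $v$ begins with a list of $L$ colors; at each round $i$, every uncolored vertex samples a uniformly random color from its current list $L_i(v)$, retaining the color (and becoming permanently colored) if and only if no neighbor chose the same color. Colors used permanently by neighbors are then deleted from $L_i(\cdot)$. I would maintain the invariant $|L_i(v)| / d_i(v) \geq L/\Delta$, where $d_i(v)$ is the number of uncolored neighbors of $v$, and reduce the theorem to establishing a per-round multiplicative gain of $1 + \Omega(\log \Delta / \Delta)$ in the ratio $|L_i(v)|/d_i(v)$. This would suffice to drive $d_i(v)$ below a constant within $\Theta(\Delta/\log\Delta)$ rounds, after which the coloring finishes greedily.

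The source of the gain is the following Tur\'an-type property of $K_{t,t,t}$-free graphs: for every copy of $K_{t,t}$ with vertex set $W$ in $G$, the common neighborhood $\bigcap_{w \in W} N(w)$ has independence number at most $t-1$, since a $t$-element independent set in this common neighborhood together with $W$ would produce a copy of $K_{t,t,t}$. Combined with the greedy independence bound, this gives $|\bigcap_{w \in W} N(w)| \leq t\Delta$. Translating this into a bound on the number of ``jointly blocked'' color-pairs in $N(v)$ (that is, pairs of colors in $L_i(v)$ that are likely to be simultaneously removed from the list, due to shared structure in $G[N(v)]$) yields a list-shrinkage rate strictly slower than the degree-shrinkage rate, providing the required gain; this is where the new variant of the nibble method is needed, since standard local-sparsity arguments alone do not suffice when $N(v)$ may contain a $K_{t,t}$.

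The main obstacle will be turning the above property into a sharp quantitative bound with the correct polynomial $t^6$ dependence. A naive iterative use of K\H{o}v\'ari--S\'os--Tur\'an on nested common neighborhoods seems to lose a factor of $t$ per level and would produce a much weaker $t^{\Theta(t)}$ dependence; avoiding this requires a more global argument that exploits the $\alpha$-bound on common neighborhoods of $K_{t,t}$ subgraphs directly, likely via a weighted extremal estimate counting bad pairs in $G[N(v)]$. Once the correct per-round gain is in hand, concentration of $|L_i(v)|$ and $d_i(v)$ around their expectations is obtained via Talagrand's inequality, and the Lov\'asz Local Lemma handles the dependencies between per-vertex bad events, completing the iteration.
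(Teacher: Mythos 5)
Your proposal takes a genuinely different route from the paper, and it has a real gap. The paper does not run a nibble on $G$ directly; it goes global. First it shows (Lemma~\ref{lemma:fewtriangles}) that a $K_{t,t,t}$-free graph on $n$ vertices with maximum degree $\Delta$ has only $O(n\Delta^{2-1/t^2})$ triangles, via an iterated convexity argument ($K_3 \to K_{1,1,t} \to K_{1,t,t} \to K_{t,t,t}$). From this it deduces (Lemma~\ref{lemma:find degeneracy order}) a vertex ordering in which every vertex lies in only $O(\Delta^{2-1/t^2})$ triangles with both other vertices to its \emph{left}. An LLL argument in the spirit of Alon--Krivelevich--Sudakov, applied to the sparse left-neighborhoods rather than the (possibly dense) full neighborhoods, then partitions $V(G)$ into $O(t^4\Delta^{1-1/(10t^2)})$ parts each inducing a triangle-free graph of maximum degree $O(\Delta^{1/(10t^2)})$ (Lemma~\ref{lemma: partition triangle free}), and Johansson's theorem colors each part with $O(t^2\Delta^{1/(10t^2)}/\log\Delta)$ colors, giving the final bound. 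No new nibble analysis is needed; Johansson is invoked as a black box.

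The gap in your proposal is exactly where you write ``this is where the new variant of the nibble method is needed.'' The gain in the ratio $|L_i(v)|/d_i(v)$ in a Johansson--Molloy-type nibble comes from collisions: two uncolored neighbors of $v$ retaining the same color, so that $|L(v)|$ shrinks strictly slower than $d(v)$. Collisions require non-adjacency inside $N(v)$, since adjacent neighbors can never both retain the same color. But a $K_{t,t,t}$-free graph can have $G[N(v)]$ equal to a complete bipartite graph (which is precisely why the paper cannot invoke the AKS locally-sparse theorem and must prove local sparsity only of left-neighborhoods). Your structural observation --- that the common neighborhood of any $K_{t,t}$ has independence number at most $t-1$, hence size at most $t\Delta$ --- is correct, but it constrains vertices \emph{outside} $N(v)$ that would complete $K_{t,t}$'s in $N(v)$ to a $K_{t,t,t}$; it says nothing about the internal density of $G[N(v)]$ itself, which is the quantity the nibble analysis actually needs to control. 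You have not supplied the argument that converts this codegree bound into a guaranteed per-round gain in the presence of a dense neighborhood, and it is far from clear that this bound alone is strong enough to do so. The paper's global triangle count and subsequent partition is precisely the device that circumvents this obstacle, and something playing that role is missing from your outline.
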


Note that $K_{t,t,t}$-free graphs are not necessarily locally sparse; indeed, the neighborhood of a vertex may even induce a complete bipartite graph. This prevents us from applying the aforementioned result of Alon, Krivelevich, and Sudakov on locally sparse graphs. To establish Theorem~\ref{theorem: coloring}, we instead show that every $K_{t,t,t}$-free graph with maximum degree $\Delta$ admits a vertex ordering in which the \emph{left-neighborhood} (that is, the set of neighbors preceding a vertex in the ordering) is sparse. We discuss this in more detail in the proof overview \S\ref{subsection: proof overview}. We remark that our method in fact yields a more general statement: if a graph~$F$ satisfies Conjecture~\ref{conj: AKS}, then any blow-up of~$F$ also satisfies the conjecture.

For independent sets, we obtain the following result:

\begin{theorem}\label{theorem: iset}
    For all $\epsilon > 0$ and $t \in \N$, there exists $d_0 \in \N$ such that the following holds for all $d\geq d_0$ and $n \in \N$.
    Any $n$-vertex $K_{t, t, t}$-free graph $G$ of average degree $d$ satisfies
    \[\alpha(G) \geq \left(1 - \epsilon\right)\dfrac{n\log d}{d}.\]
\end{theorem}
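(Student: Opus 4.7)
I would prove Theorem~\ref{theorem: iset} by an iterative R\"odl-nibble construction of the independent set, with $K_{t,t,t}$-freeness entering through a structural Tur\'an-type bound that compensates for the failure of local sparsity (which the authors explicitly note). First, I'd reduce to the case where $G$ is nearly $d$-regular by passing to a linear-sized induced subgraph on which every vertex has degree $\Theta(d)$; this standard step costs only $1 + o(1)$ in the leading constant, which is affordable because Theorem~\ref{theorem: iset} allows a factor $(1-\epsilon)$.

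The iteration runs for $T = \Theta(\log d)$ rounds. In round $i$, we maintain a subgraph $G_i$ (with $G_0$ the reduced $G$) of average degree $d_i$ on $n_i$ vertices. We sample each vertex of $G_i$ independently with probability $p_i = \epsilon/d_i$ to obtain $S_i$, form the independent set $I_i \defeq \{v \in S_i : N_{G_i}(v) \cap S_i = \varnothing\}$, add $I_i$ to the running independent set, and set $G_{i+1} \defeq G_i - (I_i \cup N_{G_i}(I_i))$. Under invariants $|I_i| \approx p_i e^{-p_i d_i} n_i$ and $d_{i+1} \approx d_i(1 - O(p_i d_i))$, summing $|I_i|$ over all rounds telescopes---exactly as in Shearer's analysis of triangle-free graphs---to $(1-\epsilon)\,n\log d / d$.

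The core difficulty is the single-round analysis: showing that $|I_i|$ and the degree statistics of $G_{i+1}$ concentrate near their expected values. In the triangle-free case this is essentially free because $N(v)$ is an independent set, so the events ``$v \in I_i$'' are almost pairwise uncorrelated. Here neighborhoods can be dense (even complete bipartite), so the decoupling fails and $K_{t,t,t}$-freeness must do the structural work. As the Tur\'an-type input I would aim to show that the ``high-correlation'' mass is negligible: for instance, the number of edges $uv$ whose codegree $|N_{G_i}(u)\cap N_{G_i}(v)|$ exceeds any mildly growing threshold is $o(|E(G_i)|)$. Such a bound can be obtained via a counting argument using K\H{o}v\'ari--S\'os--Tur\'an inside common neighborhoods: any $K_{t,t}$ living inside a common neighborhood of an edge $uv$ is within one step of producing a forbidden $K_{t,t,t}$, so summing over edges saturates quickly and forces the codegree profile to be thin. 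This codegree control bounds the relevant second moments of $|I_i|$ and of degrees in $G_{i+1}$, after which Bernstein- or Talagrand-type inequalities deliver the required concentration.

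\textbf{Main obstacle.} The sharp leading constant $1$ demands per-round error $o(1/\log d)$, so the Tur\'an-type bound must be quantitatively sharp \emph{and} continue to hold in $G_i$ for every $i \le T$. Because $K_{t,t,t}$-freeness is hereditary the structural hypothesis is preserved, but the degree sequence drifts across rounds, and I expect one must re-regularize $G_i$ at each step (or work with a weighted/fractional version of the nibble in the spirit of the ordering used for Theorem~\ref{theorem: coloring}) to keep the codegree statistics on target. Interleaving this structural upkeep with the concentration analysis of the nibble will be the delicate piece; once the single-round lemma is in place, the Shearer-style telescoping that recovers the optimal constant is routine bookkeeping.
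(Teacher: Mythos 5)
Your high-level architecture — a R\"odl nibble driven by a Tur\'an-type bound on codegrees/triangles in $K_{t,t,t}$-free graphs, with concentration via Talagrand — matches the paper's, and your instinct that the codegree profile is the right quantity to control is correct (the paper's Lemma~\ref{lemma:fewtriangles} bounds the triangle count, hence $\sum_{uv\in E} |N(u)\cap N(v)|$, and this is exactly what feeds the concentration in Lemma~\ref{lemma: expectation of edges tilde H}). But your opening step contains a genuine gap, and it is precisely the difficulty this paper is built to overcome. You assert that passing to a linear-sized induced subgraph in which every vertex has degree $\Theta(d)$ is a ``standard step'' costing only $1+o(1)$. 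That is false: the standard reduction passes to a subgraph on $n/2$ vertices of maximum degree $\le 2d$ and loses a factor of $4$, and no $(1+o(1))$-lossless regularization to near-regular exists in general. Worse, the loss recurs inside the nibble: after each round the maximum degree of $G_{i+1}$ is \emph{not} concentrated (many $4$-cycles preclude Azuma-type bounds), so any scheme depending on uniform degree control must re-regularize every round, and a crude re-regularization again costs a constant factor. This is exactly why prior nibble arguments top out at $c_1 \ge 1/4$ (cf.\ \eqref{lossoffactor4}) rather than $1$, and the paper's main novelty is the escape from this trap. You acknowledge in your ``main obstacle'' paragraph that re-regularization is needed, but you mischaracterize it as routine bookkeeping once the single-round lemma is in hand; it is the crux.

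What the paper actually does in this place is structurally different and is worth internalizing. Instead of making $G_i$ near-regular up front or batch-discarding high-degree vertices, Algorithm~\ref{algorithm: nibble} alternates at every iteration between a \emph{one-vertex cleaning step} (delete a single vertex of degree $> (1+\eps/10) d(G_i)$) and a \emph{nibble step} (run only when $\Delta(G_i) \le (1+\eps/10) d(G_i)$), while tracking the cumulative multiplicative gain $R_i = \prod_{j<i,\,j\notin S} N_j/(N_j-1)$. The observation making this work (Lemma~\ref{lem:one cleaning step}, Corollary~\ref{cor:one cleaning step ratio}) is that deleting one above-average-degree vertex strictly \emph{increases} $N_i/D_i$: cleaning is a credit, not a loss. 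The three-way termination (small $D_T$, large $R_T$, or enough nibble rounds) then dichotomizes: if cleaning dominates, the accumulated credit makes the greedy bound on the residual graph already strong enough; if nibbling dominates, $\sum |I_i|$ is large. A second point you omit entirely is the \emph{equalizing coin flip} (step~\ref{step: eq} of Algorithm~\ref{algorithm: isetstep}), which ensures every vertex survives into $\tilde H$ with probability exactly $\gamma(1-p)$ regardless of degree; without it, low-degree vertices preferentially survive and the average degree of $\tilde H$ is not controlled — another place where the tight degree bound $\Delta \le (1+\kappa)d$ is indispensable. Smaller quibbles: invoking ``Shearer-style telescoping'' is a misnomer (Shearer's 1983 argument is an inductive differential-equation argument, not a nibble; the $(1-\eps)n\log d/d$ bound via nibble in the average-degree regime is new here), and your remark that a $K_{t,t}$ in $N(u)\cap N(v)$ is ``one step'' from a $K_{t,t,t}$ is off — the pair $\{u,v\}$ gives only a $K_{2,t,t}$, and you need $t$ common neighbors of the whole $K_{t,t}$ (the paper instead iterates Jensen through labelled $K_{1,1,t}$'s and $K_{1,t,t}$'s).
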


Note that Theorem~\ref{theorem: coloring} implies $c_1(K_{t, t, t}) = \Omega(1/t^6)$, which is much weaker than the above result.
Additionally, we remark that local sparsity of the left-neighborhood yields the bound $c_1(K_{t, t, t}) = \Omega(1/t^2)$ by employing a recent result of Martinsson and Steiner on the fractional chromatic number of such graphs~\cite{martinsson2025random}.\footnote{See the discussion following the statement of Theorem 1.7 in \cite{martinsson2025random}.}
Again, this bound is much weaker than that in Theorem~\ref{theorem: iset}.
Much of our effort in this paper is in showing $c_1(K_{t, t, t}) \geq 1$, which we believe is optimal with respect to current approaches (see \S\ref{subsection: prior work}). 

Our proof of Theorem~\ref{theorem: iset} combines a new variant of the R\"odl nibble method with a Tur\'an-type bound on the number of triangles in $K_{t,t,t}$-free graphs of bounded maximum degree. The procedure alternates between two complementary phases: a \emph{cleaning} step and a \emph{nibble} step. In each cleaning step, we remove a vertex of high degree, carefully tracking how this removal influences the eventual size of the independent set. Once the degrees in the remaining graph are sufficiently well controlled—that is, when the maximum degree is extremely close to the average degree—we perform a nibble step, extracting a small independent set. We prove that if many cleaning steps are taken during this procedure, even a simple greedy algorithm on the remaining graph produces an independent set of the desired size, whereas if many nibble steps are taken, the union of the extracted sets attains the desired bound.

This tight control on the maximum degree in the remaining graph is crucial for our nibble step. Roughly speaking, it ensures that after each nibble step, the average degree decreases at approximately the same rate as the number of vertices. Without such control, the vertex set could shrink too rapidly, preventing us from performing enough nibble steps to extract an independent set of the desired size. A more detailed discussion of this intuition is provided in the proof overview in~\S\ref{subsection: proof overview}.

As every $3$-colorable graph $F$ is a subgraph of $K_{t, t, t}$ for $t = |V(F)|$, Theorem~\ref{theorem: general} follows as a corollary to Theorems~\ref{theorem: coloring}~and~\ref{theorem: iset}.
We remark that while we treat $t$ as a constant in this paper, our arguments in the proof of Theorem~\ref{theorem: coloring} hold for all $t \leq \alpha_1 \sqrt{\log \Delta/\log \log \Delta}$, and those in the proof of Theorem~\ref{theorem: iset} hold for all $t \leq \alpha_2 \sqrt{\eps\log d/\log \log d}$ for some absolute constants $\alpha_1, \alpha_2 > 0$.
This falls into a line of work concerning $F$-free graphs where $|V(F)| = \omega(1)$ (see, e.g., \cite{anderson2025coloring, AndersonBernshteynDhawan, bonamy2022bounding, anderson2024coloring, dhawan2025bounds, DKPS}).

\subsection{Notation}
Throughout the rest of the paper we use the following basic notation. For $n \in \N$, we let $[n] \defeq \set{1, \ldots, n}$.
For a graph $G$, its vertex and edge sets are denoted $V(G)$ and $E(G)$, respectively.

For a vertex $v \in V(G)$, $N_G(v)$ denotes the neighbors of $v$ and $\deg_G(v) \defeq |N_G(v)|$ denotes the degree of $v$. We let $\Delta(G)$ and $d(G)$ denote the maximum and average degree of $G$, respectively.
Let $N_G[v] \defeq N_G(v) \cup \set{v}$ denote the closed neighborhood of $v$.
For a subset $U \subseteq V(G)$, the subgraph induced by $U$ is denoted by $G[U]$, and $N_G(U)$ is the set of all vertices adjacent to a vertex in $U$, i.e., $N_G(U) \defeq \bigcup_{u \in U}N_G(u)$.
We also let $N_G[U] \defeq U \cup N_G(U)$ and $G - U \defeq G[V(G) \setminus U]$.
We drop the subscript $G$ when the context is clear. For convenience, we write $x=(1\pm\beta)y$ if $(1-\beta)y\le x\le(1+\beta)y$.

\subsection{Comparison with prior work}\label{subsection: prior work}

In this section, we compare our proof method with earlier approaches in related work. 

\subsubsection*{Shearer-style induction}

In his seminal 1983 paper~\cite{shearer1983note}, Shearer proved that every triangle-free graph $G$ of average degree $d$ satisfies $\alpha(G) \ge nf(d)$, where $f(d) = (1-o(1))\frac{\log d}{d}$ is a carefully chosen function. 
The proof proceeds by removing the closed neighborhood of a uniformly random vertex $v \in V(G)$ and considering the resulting graph $G'$; by induction one obtains $\alpha(G) \ge 1 + |V(G')| f(d(G'))$. A key aspect of the argument is to show that, for triangle-free graphs, the expectation of the right-hand side is at least $nf(d)$; the function $f$ is selected as the solution to a differential equation that naturally arises in this computation.

Extending this method even to the $K_{1,t,t}$-free setting encounters several substantial obstacles and has not been achieved in previous work. In particular, it is unclear what is the right choice for $f$ in this case.

A related Shearer-style inductive approach was recently used by Martinsson and Steiner in their work on local fractional colorings of triangle-free graphs, where instead of choosing a uniformly random vertex, they select a vertex $v$ with probability proportional to a weight assigned to each vertex~\cite{martinsson2025random}.
Buys, van den Heuvel, and Kang refer to this method as ``induction \'a la Shearer'' in their study of counting colorings of triangle-free graphs~\cite{buys2025triangle}. Their work provides a strict generalization of this technique, yielding a lower bound on the partition function of triangle-free graphs arising from the hard-core model; see~\cite{davies2025hard} for an introduction to this model and its applications to graph coloring.
We remark that the difficulty of extending these approaches to $K_{1,t,t}$-free graphs persists across all such applications.

\subsubsection*{Random independent sets}

In Shearer's follow-up work on $K_r$-free graphs of maximum degree $\Delta$ \cite{shearer1995independence}, he proves the bound $\alpha(G) = \Omega\left(\frac{n\log \Delta}{r\Delta\log\log \Delta}\right)$ by showing that a uniformly random independent set $I$ has the desired size in expectation.
His proof proceeds as follows: define the random variables
\[X_v = \Delta\,\bbone{v\in I} + |N(v) \cap I|, \qquad \text{and} \qquad X = \sum_v X_v.\] 
For any independent set $J$ disjoint from $N[v]$, he shows that
\[\E[X_v \mid I \setminus N[v] = J] = \Omega\left(\frac{\log \Delta}{r\log\log \Delta}\right).\]

As the right hand side above is independent of $v$ and $J$, one combines it with the fact that $X \leq 2\Delta|I|$ to complete the proof.
This approach is now referred to as a `Shearer-type argument' and has been applied to a number of graph classes satisfying various local structural constraints (see, e.g., \cite{DKPS, dhawan2025bounds, alon1996independence, davies2018average}). Unfortunately, this approach does not directly yield the bound $\alpha(G)=\Omega\left(\frac{n\log \Delta}{\Delta}\right)$ for $K_{t,t,t}$-free graphs $G$ with $t \ge 2$, as we encounter the same difficulty as in the proof for $K_r$-free graphs; therefore, we must employ a different method to establish our main results.

Furthermore, the approach does not work in the average degree setting.
As mentioned earlier, a key lemma in our proof shows that $G$ contains ``few'' triangles globally.
This implies that $G$ contains a locally sparse induced subgraph on $(1 - o_\Delta(1))n$ vertices.\footnote{To see this, let $G$ be a graph with maximum degree $\Delta$ and at most $n\Delta^{2-\eps}$ triangles. Then the subgraph induced by vertices contained in at most $\Delta^{2-\eps/2}$ triangles still has maximum degree at most $\Delta$, is $\Delta^{\eps/2}$-locally sparse, and contains at least $(1 - \Delta^{-\eps/2})n$ vertices.}
With this in hand, one can prove the bound $c_1(K_{t, t, t}) \geq 1/(4t^2)$ by employing a Shearer-type argument, which is weaker than Theorem~\ref{theorem: iset}.
The proof follows, for instance, by applying \cite[Theorem~23~(ii)]{DKPS} to the locally sparse induced subgraph.

\subsubsection*{The nibble method}

Another technique that is heavily employed in results on independent sets and colorings of (hyper)graphs is the `R\"odl nibble method,' an iterative procedure to construct the desired structure.
With regards to independent sets, the procedure proceeds as follows: randomly construct a ``small'' independent set $I$ in $G$ and repeat the same procedure on $G' \defeq G- N[I]$.
When considering graphs of girth at least five, one can show that $\Delta(G')$ is concentrated around its expected value, leading to a proof of the bound\footnote{This was first observed by Kim in the coloring setting \cite{Kim95}; see \cite[Ch. 12]{MolloyReed} for a textbook treatment of the argument.} $\alpha(G) \geq (1-o(1))\frac{n\log\Delta}{\Delta}$.
Recently, Campos, Jenssen, Michelen, and Sahasrabudhe developed a variant of this method for graphs with bounded codegree and applied it to problems on sphere packings~\cite{campos2023new}. The chromatic number of such graphs was later studied in~\cite{bradshaw2025toward} using a different variant of the nibble method, developed by Bradshaw, 
Wigal, 
and the first and third named authors of the present manuscript,
in connection with Vu's conjecture.

When considering triangle-free graphs, however, $\Delta(G')$ is no longer concentrated.
Indeed, the possible presence of many $4$-cycles precludes the use of Azuma-type concentration inequalities entirely.
To overcome this obstacle, one instead concentrates the average degree. 
This idea has been employed extensively in the graph coloring literature (cf.~\cite{Joh_triangle, PS15, anderson2024coloring, anderson2025coloring, dhawan2024palette, Jamall}). 
However, the argument still requires control over $\Delta(G')$, necessitating a cleaning step in which high-degree vertices are removed; see, e.g., the proof overview in~\S\ref{subsection: proof overview}. 
The key component of our analysis is a novel cleaning procedure that carefully tracks how the removal of high-degree vertices affects the eventual size of the independent set. 
This allows us to obtain an effective bound in terms of the average degree (see~\S\ref{subsection: proof overview} for further discussion).

\subsubsection*{Shattering threshold}

We conclude this section with a discussion of the optimality of $c_1(K_{t,t,t})$ with respect to our proof technique.
It is known that there exist $n$-vertex $d$-regular graphs of arbitrarily high girth whose independence number satisfies $\alpha(G) \le (2+o(1))\frac{n\log d}{d}$~\cite{BollobasIndependence}.
However, proving that a pseudorandom graph class $\mathcal{G}$ satisfies $\alpha(G) > \frac{n\log d(G)}{d(G)}$ for all $G \in \mathcal{G}$ remains a tantalizing open problem.
The quantity $\frac{n\log d(G)}{d(G)}$ appears to be a natural threshold for lower bounds on $\alpha(G)$, as it coincides with a well-known algorithmic barrier—often referred to as the shattering threshold~\cite{Achlioptas, coja2015independent}—for independent sets in random graphs of average degree $d$.
A celebrated conjecture of Karp~\cite{karp1976probabilistic} asserts that no polynomial-time algorithm can, with high probability, construct an independent set of size at least $(1+\eps)\frac{n\log d}{d}$ for any $\eps>0$ in the sparse random graph $G(n, d/n)$.
Proving this conjecture would imply a statement even stronger than $P \ne NP$, and hence researchers have instead focused on providing evidence in its favor: for several restricted classes of algorithms, this bound is indeed optimal~\cite{RV, wein2020optimal, gamarnik2025optimal}.
Since $G(n, d/n)$ is $K_{t,t,t}$-free with high probability (as long as $n$ is sufficiently large compared to $d$), any efficient algorithmic proof of the bound $\alpha(G) \ge (1+\eps)\frac{n\log d}{d}$ for $n$-vertex $K_{t,t,t}$-free graphs of average degree $d$ would in fact disprove Karp’s conjecture.
As we will see, our proof of Theorem~\ref{theorem: iset} yields a polynomial-time algorithm for constructing such independent sets.
In fact, both Shearer-style induction and the nibble method lead to polynomial-time constructions as well.
Moreover, in the hard-core model refinement of the Shearer-type random independent set approach, efficient sampling algorithms are known to exist below a certain critical regime closely related to the shattering threshold~\cite{bhatnagar2016decay, weitz2006counting, galvin2006slow}.
In light of Karp’s conjecture, we therefore believe that proving $c_1(K_{t,t,t}) > 1$ would require a substantially new idea—one that departs significantly from all existing approaches in the literature, even for the case $t=1$.

\subsection{Proof overview}\label{subsection: proof overview} In this section, we provide a detailed sketch of the main ideas behind our proofs of Theorems~\ref{theorem: coloring} and~\ref{theorem: iset}.

\subsubsection*{Left local sparsity and the proof of Theorem~\ref{theorem: coloring}}

While a $K_{t,t,t}$-free graph with bounded maximum degree need not be locally sparse—that is, individual vertices may still lie in many triangles—we show that such graphs have a small global triangle count. Specifically, we show that every $K_{t,t,t}$-free graph with maximum degree $\Delta$ contains at most $O(n\Delta^{2 - 1/t^2})$ triangles.
This global bound plays a key role in the proofs of both Theorem~\ref{theorem: coloring} and Theorem~\ref{theorem: iset}. In the proof of Theorem~\ref{theorem: coloring}, we convert this global sparsity into a form of local sparsity by identifying a suitable vertex ordering: we show that there exists an ordering of the vertex set in which each vertex $u$ appears in few triangles where $u$ is the rightmost vertex. In other words, while a vertex’s full neighbourhood may not be sparse, its left-neighbourhood in the ordering is.

Recall that the result of Alon, Krivelevich, and Sudakov~\cite{AKSConjecture} bounds the chromatic number of any graph with given maximum degree and sparse neighbourhoods. By adapting their argument, we extend this bound to graphs that admit a vertex ordering in which each vertex has a sparse left-neighbourhood. Using our bound on the number of triangles, we show that all $K_{t,t,t}$-free graphs with bounded maximum degree admit such a vertex ordering, thereby yielding the desired bound on the chromatic number.

\subsubsection*{The R\"odl nibble for the proof of Theorem~\ref{theorem: iset}}

As mentioned earlier, we develop a new variant of the R\"odl nibble method to prove Theorem~\ref{theorem: iset}. Classically, the nibble method for constructing large independent sets proceeds as follows:
\begin{enumerate} [label=\textup{(G\arabic*)}, leftmargin=50pt]
   \item\label{G1} Initialize $G_0 = G$ and set $i = 0$.

\item\label{G2} Let $A_i$ be a $p_i$-random subset of $V(G_i)$, where $p_i \coloneqq \frac{\eps}{\Delta(G_i)}$.

\item\label{G3} Let $V_i' \coloneqq V(G_i) \setminus N[A_i]$, and let $I_i \subseteq A_i$ be a maximum independent set in $G_i[A_i]$.

\item\label{G4} Set $G_{i+1} \coloneqq G_i[V_i']$ and repeat steps~\ref{G2}--\ref{G4}.
\end{enumerate}
We continue iterating steps~\ref{G2}--\ref{G4} as long as
\begin{align}\label{eq: max deg termination cond}
    |V(G_i)| \geq \frac{|V(G)|}{\Delta(G)^{1 - \Theta(\eps)}}, \qquad \text{and} \qquad \Delta(G_i) \geq \Delta(G)^{\Theta(\eps)}.
\end{align}

For graphs of girth at least $5$, after each iteration of the above procedure both the maximum degree $\Delta(G_i)$ and the number of vertices $|V(G_i)|$ decrease by a factor of roughly $e^{-\eps}$. Consequently, condition~\eqref{eq: max deg termination cond} holds for all $i \le (1 - \eps)\frac{\log \Delta(G)}{\eps}$. Moreover, the following estimates hold for every $i \le (1 - \eps)\frac{\log \Delta(G)}{\eps}$: 
\[
|I_i| = \alpha(G_i[A_i]) \gtrsim \frac{\eps |V(G_i)|}{\Delta(G_i)} 
\qquad \text{and} \qquad 
\frac{|V(G_i)|}{\Delta(G_i)} \ge (1 - \eps)\frac{|V(G)|}{\Delta(G)}.
\]
Therefore, the union $\bigcup_i I_i$ forms an independent set in $G$ of size at least $(1 - 2\eps)\frac{|V(G)| \log \Delta(G)}{\Delta(G)}$.


When dealing with triangle-free graphs, the maximum degree $\Delta(G_i)$ is no longer necessarily concentrated throughout the iterations, as the possible presence of numerous $4$-cycles prevents the application of any Azuma-type concentration inequality. However, the average degree $d(G_i)$ does remain concentrated; this was first observed by Ajtai, Koml\'os, and Szemer\'edi in~\cite{ajtai1980note} (see also~\cite{PS15, alon2020palette, anderson2025coloring, Jamall} for coloring variants of the argument). Unfortunately, some control over the maximum degree is still required to ensure that condition~\eqref{eq: max deg termination cond} holds for sufficiently many iterations. This necessitates the introduction of a cleaning step (see step~\ref{step: cleaning triangle} below).
\begin{enumerate}[label=\textup{(AKS\arabic*)}, leftmargin=50pt]
    \item\label{AKS1} Initialize $G_0 = G$ and set $i = 0$.
    
    \item\label{AKS2} Let $A_i$ be a $p_i$-random subset of $V(G_i)$, where $p_i \coloneqq \frac{\eps}{\Delta(G_i)}$.
    
    \item\label{AKS3} Let $V_i' \coloneqq V(G_i) \setminus N[A_i]$, and let $I_i \subseteq A_i$ be a maximum independent set in $G_i[A_i]$.
    
    \item\label{step: cleaning triangle} Let $V_i'' \subseteq V_i'$ be the set of vertices of degree at most $2d(G_i')$ in the subgraph $G_i' \coloneqq G_i[V_i']$.
    
    \item\label{AKS5} Set $G_{i+1} \coloneqq G_i[V_i'']$ and repeat steps~\ref{AKS2}--\ref{AKS5}.
\end{enumerate}
The cleaning step results in a constant-factor loss of $4$. Roughly speaking, we lose a factor of $2$ due to the imposed upper bound on $\Delta(G_i)$, and potentially another factor of $2$ because at most half of the vertices may be discarded when passing from $V_i'$ to $V_i''$. More formally, one can show that condition~\eqref{eq: max deg termination cond} holds only for all $i \le (1 - \eps)\frac{\log \Delta(G)}{2\eps}$ (rather than $i \le (1 - \eps)\frac{\log \Delta(G)}{\eps}$). Moreover, we have
\[
|I_i| = \alpha(G_i[A_i]) \gtrsim \frac{\eps |V(G_i)|}{\Delta(G_i)}
\qquad \text{and} \qquad
\frac{|V(G_i)|}{\Delta(G_i)} \ge (1 - \eps)\frac{|V(G)|}{2\Delta(G)}.
\]
Therefore, the union $\bigcup_i I_i$ forms an independent set in $G$ of size at least $(1 - 2\eps)\frac{|V(G)| \log \Delta(G)}{4\Delta(G)}$. 

The triangle-freeness of $G$ is used to ensure the concentration of $d(G_i)$ during each iteration of the nibble. In fact, for this concentration to hold, it suffices that $G_i$ contain only ``few'' triangles.\footnote{More precisely, we require that $G_i$ contain at most $|V(G_i)|\Delta(G_i)^{2  - \delta}$ triangles, where $\delta = \omega\left(\frac{\log\log\Delta(G_i)}{\log\Delta(G_i)}\right)$.}
By combining the above nibble approach with our Tur\'an-type bound on the number of triangles in $K_{t,t,t}$-free graphs of a given maximum degree, we can easily obtain

\begin{equation}
\label{lossoffactor4}
\alpha(G) \ge (1 - \eps)\frac{|V(G)| \log \Delta(G)}{4\Delta(G)}.
\end{equation}

The main novelty of our result (Theorem~\ref{theorem: iset}) is that it removes the constant-factor loss of $4$ in~\eqref{lossoffactor4} for $K_{t,t,t}$-free graphs, thereby matching the shattering threshold and Shearer's celebrated bound for triangle-free graphs. Furthermore, our proof is notably simpler, avoiding many of the technical computations required in previous approaches.

Key to our proof is a novel cleaning procedure that not only improves the constant factor from $1/4$ to $1$, but also extends the result to a formulation in terms of the \emph{average degree} $d(G)$ rather than the maximum degree $\Delta(G)$, for all $K_{t,t,t}$-free graphs $G$. As noted earlier, such a bound in terms of the average degree was previously known only for $K_3$-free graphs by Shearer’s seminal work~\cite{shearer1983note}. 

An outline of our nibble method is as follows: 
\begin{enumerate}[label=\textup{(DJM\arabic*)}, leftmargin=50pt]
    \item \label{DJM1} Initialize $G_0 = G$ and set $i = 0$.
    \item \label{DJM2} If there exists a vertex of degree greater than $(1+\eps)d(G_i)$, choose such a vertex $v$. Set $G_{i+1} \coloneqq G_i - v$, keep track of how this removal affects the eventual size of the independent set, and repeat steps~\ref{DJM2}--\ref{DJM3}.
    \item \label{DJM3} If $\Delta(G_i) \le (1+\eps)d(G_i)$, then:
    \begin{enumerate}
        \item Let $A_i$ be a $p_i$-random subset of $V(G_i)$, where $p_i \coloneqq \frac{\eps}{d(G_i)}$.
        \item Let $V_i' \coloneqq V(G_i) \setminus N[A_i]$, and let $I_i \subseteq A_i$ be a maximum independent set in $G_i[A_i]$.
        \item Set $G_{i+1} \coloneqq G_i[V_i']$ and repeat steps~\ref{DJM2}--\ref{DJM3}.\footnote{We note that in the actual algorithm, $G_{i+1} \subseteq G_i[V_i']$ is the induced subgraph on the set of vertices in $V_i'$ that survive their so-called \textit{equalizing coin flips}. While we omit this technical detail in the current overview, we emphasize that the coin flip mechanism is also one of the reasons we require control over $\Delta(G_i)$.}
    \end{enumerate}
\end{enumerate}

In particular, our nibble procedure alternates between \emph{cleaning} and \emph{nibble} steps.
During a cleaning step, we remove a single vertex of high degree, while during a nibble step we extract an independent set of size roughly $\eps \frac{|V(G_i)|}{d(G_i)}$.

To streamline the remainder of this proof overview, we introduce the parameters
\[
N_i \coloneqq |V(G_i)|, \qquad \Delta_i \coloneqq \Delta(G_i), \qquad D_i \coloneqq d(G_i).
\]
Assume that the procedure runs for $T$ iterations, and let $S \subseteq [T]$ denote the set of iterations in which a nibble step is performed.

A key observation in our proof is that although both $N_i$ and $D_i$ decrease during a cleaning step, their ratio $N_i / D_i$ increases by a factor of approximately $\left(\frac{N_i}{N_i - 1}\right)^{\eps}$ (see Corollary~\ref{cor:one cleaning step ratio}). 
To quantify this effect, we define
\[
R_i \coloneqq \prod_{j < i,\, j \notin S} \frac{N_j}{N_j - 1}.
\]
The term $R_i^{\eps}$ thus represents the cumulative gain in the ratio $N_i / D_i$ contributed by all cleaning steps up to iteration~$i$.

Intuitively, our argument proceeds as follows: if $|S|$ is small and $D_T$ remains sufficiently large (that is, most steps are cleaning steps), then the gain in $N_i / D_i$ is large enough to guarantee that a greedy independent set in $G_T$ already has the desired size; 
conversely, if $|S|$ is sufficiently large (that is, enough nibble steps are taken), then $\bigcup_i I_i$ itself has the desired size.
With this intuition in place, we now state the termination condition of the nibble procedure more precisely. We stop the process if any of the following holds:
\begin{align}\label{eq: termination avg deg}
    D_T < D_1^{\Theta(\eps)}, \qquad 
    R_T \ge D_1^{\Theta(\eps)}, \qquad \text{or} \qquad 
    |S| = (1 - \eps)\frac{\log D_1}{\eps}.
\end{align}

Indeed, given this termination condition, we show that the following inequalities hold for all $i \in [T]$ (see Lemma~\ref{lemma: lb on n/d and n}):
\begin{align}\label{eq: lemma on n/d and n}
    \frac{N_i}{D_i} \geq (1-\eps)R_i^{\eps}\,\frac{N_1}{D_1}, \qquad \text{and} \qquad N_i \geq \frac{N_1}{D_1^{1 - \Theta(\eps)}}.
\end{align}
With this in hand, we prove Theorem~\ref{theorem: iset} as follows (see \S\ref{sec:propassuminglem} for a detailed computation):
\begin{itemize}
    \item If the process terminates due to the condition on $D_T$ in~\eqref{eq: termination avg deg}, then a greedy independent set in $G_T$ has the desired size by combining $D_T < D_1^{\Theta(\eps)}$ with the lower bound on $N_T$ given in~\eqref{eq: lemma on n/d and n}.
    \item If the process terminates due to the condition on $R_T$ in~\eqref{eq: termination avg deg}, then a greedy independent set in $G_T$ has the desired size by combining $R_T \ge D_1^{\Theta(\eps)}$ with the lower bound on $N_T / D_T$ in~\eqref{eq: lemma on n/d and n}.
    \item If the process terminates due to the condition on $|S|$ in~\eqref{eq: termination avg deg}, then recalling that $|I_i| \ge \eps N_i / D_i$ for all $i \in [T]$, it follows that $\bigcup_i I_i$ has the desired size by combining $|S| = (1 - \eps)\frac{\log D_1}{\eps}$ with the lower bound on $N_i / D_i$ in~\eqref{eq: lemma on n/d and n} and the fact that $R_i \ge 1$ for all~$i \in [T]$.
\end{itemize}

We emphasize that maintaining a tight upper bound on~$\Delta_i$ at every nibble step~\ref{DJM3} is critical to the success of our method. In the remainder of this overview, we explain why this degree control is necessary. During the $i$-th nibble step, we consider an $N_i$-vertex graph with average degree $D_i$ and maximum degree $\Delta_i$. We then find an independent set $I_i \subseteq A_i$ of size roughly $\eps N_i / D_i$, and construct a subgraph $G_{i+1} \subseteq G_i - N[A_i]$ satisfying $N_{i+1} \gtrsim \gamma_i N_i$ and $D_{i+1} \lesssim \gamma_i D_i$, where $\gamma_i \approx \exp(-\eps \Delta_i / D_i)$.

Suppose that only a few cleaning steps are performed throughout the nibble procedure, so that $R_i \approx 1$ for all $i \in [T]$ and $D_T$ remains sufficiently large. Then, by~\eqref{eq: lemma on n/d and n},
\[
\frac{N_i}{D_i} \gtrsim (1 - \eps)\frac{N_1}{D_1} = (1 - \eps)\frac{|V(G)|}{d(G)} 
    \qquad \text{for all } i \in [T].
\]
It follows that
\[
\Big|\bigcup_i I_i\Big| \gtrsim \eps \sum_i \frac{N_i}{D_i}
    \gtrsim (1 - \eps)\eps |S| \frac{|V(G)|}{d(G)}.
\]
Hence, to guarantee the desired bound in Theorem~\ref{theorem: iset}, we require $|S| \gtrsim \frac{\log d(G)}{\eps}$, that is, we need sufficiently many nibble steps to be performed.

To estimate how many such steps can be performed, observe that if $\Delta_i \le C D_i$ for some constant $C > 0$, then $\gamma_i \approx \exp(-\eps \Delta_i / D_i) \gtrsim \exp(-\eps C)$, and therefore
\[
N_T \gtrsim N_1 \exp\!\left(-\eps \sum_{i \in S} \frac{\Delta_i}{D_i}\right)
    \gtrsim N_1 \exp(-C\eps |S|).
\]
Consequently, the nibble procedure can be executed for at most 
$|S| = (1 - \eps)\frac{\log D_1}{C\eps} 
    = (1 - \eps)\frac{\log d(G)}{C\eps}$
steps while maintaining the lower bound $N_T \ge N_1 / D_1^{1 - \Theta(\eps)}$ in~\eqref{eq: lemma on n/d and n}. This yields
\[
\Big|\bigcup_i I_i\Big| 
    \ge (1 - \eps)\eps |S| \frac{|V(G)|}{d(G)} 
    \ge \frac{(1 - 2\eps)}{C}\frac{|V(G)|\log d(G)}{d(G)}.
\]
In particular, to achieve the bound in Theorem~\ref{theorem: iset}, we must take $C$ as small as possible; since $C \ge 1$, this forces us to set $C = 1 + \varepsilon$, which explains why degree control is necessary.

\subsection{Structure of the paper}
The rest of the paper is organized as follows.
In \S\ref{section: prelim}, we collect the probabilistic tools we need.
In \S\ref{section: ktttfree}, we prove a Tur\'an-type result for $K_{t, t, t}$-free graphs with bounded maximum degree and use it to show that such graphs admit a vertex ordering with sparse left-neighborhoods.
In \S\ref{section: coloring}, we prove Theorem~\ref{theorem: coloring}.
In \S\ref{section: ISET proof}, we establish Theorem~\ref{theorem: iset}, assuming a key result, Proposition~\ref{prop: alg iset}, concerning the extraction of independent sets in the nibble step; the proof of this proposition is given in \S\ref{section: proof of prop}.
Finally, in \S\ref{section: concluding remarks}, we discuss some open problems.

\section{Preliminaries}\label{section: prelim}

In this section we outline the main background facts that will be used in our arguments. We start with the symmetric version of the Lov\'asz Local Lemma.

\begin{theorem}[{Lov\'asz Local Lemma; \cite[\S4]{MolloyReed}}]\label{theorem: LLL}
    Let $A_1$, $A_2$, \ldots, $A_n$ be events in a probability space. Suppose there exists $p \in [0, 1)$ such that for all $1 \leq i \leq n$ we have $\Pr[A_i] \leq p$. Further suppose that each $A_i$ is mutually independent from all but at most $d_{LLL}$ other events $A_j$, $j\neq i$ for some $d_{LLL} \in \N$. If $4pd_{LLL} \leq 1$, then with positive probability none of the events $A_1$, \ldots, $A_n$ occur.
\end{theorem}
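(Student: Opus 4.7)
\medskip

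The plan is to prove the stronger auxiliary statement that for every index $i \in [n]$ and every subset $S \subseteq [n] \setminus \{i\}$,
\[
\Pr\!\left[A_i \,\middle|\, \bigcap_{j \in S} \overline{A_j}\right] \le 2p,
\]
by induction on $|S|$. Once this is established, an application of the chain rule yields
\[
\Pr\!\left[\bigcap_{i=1}^n \overline{A_i}\right] = \prod_{i=1}^n \Pr\!\left[\overline{A_i} \,\middle|\, \bigcap_{j<i} \overline{A_j}\right] \ge \prod_{i=1}^n (1 - 2p) > 0,
\]
where positivity follows because $4pd_{LLL} \le 1$ forces $2p \le 1/2$ (assuming $d_{LLL} \ge 1$, which we may do since otherwise the events are mutually independent and the result is trivial from $p < 1$).

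For the base case $|S| = 0$, the bound is immediate since $\Pr[A_i] \le p \le 2p$. For the inductive step, I would partition $S = S_1 \sqcup S_2$, where $S_1$ consists of indices $j$ such that $A_j$ is among the (at most $d_{LLL}$) events with which $A_i$ may be dependent, and $S_2$ contains the remaining indices. Writing
\[
\Pr\!\left[A_i \,\middle|\, \bigcap_{j \in S} \overline{A_j}\right] = \frac{\Pr\!\left[A_i \cap \bigcap_{j \in S_1} \overline{A_j} \,\middle|\, \bigcap_{j \in S_2} \overline{A_j}\right]}{\Pr\!\left[\bigcap_{j \in S_1} \overline{A_j} \,\middle|\, \bigcap_{j \in S_2} \overline{A_j}\right]},
\]
I would bound the numerator from above by $\Pr[A_i \mid \bigcap_{j \in S_2} \overline{A_j}] = \Pr[A_i] \le p$, using that $A_i$ is mutually independent of the collection $\{A_j : j \in S_2\}$.

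For the denominator, a union bound combined with the inductive hypothesis (applicable since $|S_2| < |S|$) gives
\[
\Pr\!\left[\bigcap_{j \in S_1} \overline{A_j} \,\middle|\, \bigcap_{j \in S_2} \overline{A_j}\right] \ge 1 - \sum_{j \in S_1} \Pr\!\left[A_j \,\middle|\, \bigcap_{k \in S_2} \overline{A_k}\right] \ge 1 - 2p\,|S_1| \ge 1 - 2p\,d_{LLL} \ge \tfrac{1}{2},
\]
where the last inequality uses the hypothesis $4p d_{LLL} \le 1$. Taking the ratio of these two bounds gives $\Pr[A_i \mid \bigcap_{j \in S} \overline{A_j}] \le p/(1/2) = 2p$, completing the induction.

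The main subtlety in this argument is the correct formalization of ``mutual independence'' of $A_i$ from the family $\{A_j : j \in S_2\}$, which is what justifies collapsing $\Pr[A_i \mid \bigcap_{j \in S_2} \overline{A_j}]$ to $\Pr[A_i]$; everything else is a routine conditional-probability manipulation. Since the statement cited here is standard and an exposition already appears in \cite{MolloyReed}, I would keep the argument short and refer the reader there for the fully expanded details.
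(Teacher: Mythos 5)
The paper does not prove this statement: it is cited as a standard black box from Molloy and Reed's textbook and used off-the-shelf (in the proof of Lemma~\ref{lemma: partition triangle free}). So there is no in-paper proof to compare against. Your argument is the classical proof of the symmetric Lov\'asz Local Lemma (essentially the one appearing in Alon--Spencer and in the cited source), and it is correct in substance. Two small points you gloss over, which are usually elided in expositions but worth being aware of: first, you implicitly assume the conditioning events $\bigcap_{j\in S}\overline{A_j}$ have positive probability; this is true, but needs to be carried along in the induction, for instance by observing that your denominator bound $\Pr[\bigcap_{j\in S_1}\overline{A_j}\mid\bigcap_{j\in S_2}\overline{A_j}]\ge 1/2>0$ together with the chain rule propagates positivity. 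Second, when $S_1=\varnothing$ the justification ``applicable since $|S_2|<|S|$'' is vacuously false (then $S_2=S$); in that case the denominator is an empty intersection equal to $1$ and no appeal to the inductive hypothesis is needed, so the argument still closes, but the parenthetical should be qualified. Neither affects correctness, and citing \cite{MolloyReed} for the full details, as you propose, is exactly what the paper does.
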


In addition to the Local Lemma, we will use a few standard concentration inequalities. The first is the Chernoff bound for binomial random variables, stated below in its two-tailed form:

\begin{theorem}[{Chernoff Bound; \cite[\S5]{MolloyReed}}]\label{theorem: chernoff}
    Let $X$ be a binomial random variable on $n$ independent trials, each with probability $p$ of success. Then for any $0 \leq t \leq \E[X]$, we have
    \begin{align*}
        \Pr\Big[\big|X - \E[X]\big| \geq t\Big] < 2\exp{\left(-\frac{t^2}{3\E[X]}\right)}. 
    \end{align*}
\end{theorem}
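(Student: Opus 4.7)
The plan is to use the classical exponential-moment (Chernoff) method. Writing $X=\sum_{i=1}^n X_i$ where the $X_i$ are i.i.d.\ Bernoulli$(p)$ and setting $\mu \defeq \E[X] = np$, I would first handle the upper tail. For any $\lambda>0$, Markov's inequality applied to the nonnegative random variable $e^{\lambda X}$ yields
\[
\Pr[X-\mu \geq t] \;\leq\; e^{-\lambda(\mu+t)}\,\E[e^{\lambda X}].
\]
Using independence together with the pointwise bound $\E[e^{\lambda X_i}] = 1-p+pe^{\lambda} \leq \exp(p(e^\lambda-1))$ (which follows from $1+x\leq e^x$), I obtain $\E[e^{\lambda X}] \leq \exp(\mu(e^\lambda - 1))$, and hence
\[
\Pr[X-\mu \geq t] \;\leq\; \exp\bigl(\mu(e^\lambda-1) - \lambda(\mu+t)\bigr).
\]

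Next, I would optimize by choosing $\lambda = \ln(1+t/\mu)$, which reduces the exponent to $-\mu\,\phi(t/\mu)$ where $\phi(x) \defeq (1+x)\ln(1+x)-x$. The task is then to prove the scalar inequality $\phi(x) \geq x^2/3$ for $x \in [0,1]$, which holds since $t\leq\mu$. This can be verified by noting that $\phi(0) = 0$, $\phi'(0) = 0$, and a short computation using the series expansion of $\ln(1+x)$ shows $\phi''(x) = 1/(1+x) \geq 2/3$ on $[0,1]$, so twofold integration gives the desired estimate. This produces the upper-tail bound $\Pr[X-\mu \geq t] \leq \exp(-t^2/(3\mu))$.

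For the lower tail, I would repeat the same argument with parameter $-\lambda$ (applying Markov's inequality to $e^{-\lambda X}$); the analogous optimization yields the sharper estimate $\Pr[\mu - X \geq t] \leq \exp(-t^2/(2\mu))$, which in particular implies the weaker bound $\exp(-t^2/(3\mu))$. A union bound over the two tails then contributes the factor of~$2$ and completes the proof.

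The only nontrivial ingredient is the scalar inequality $\phi(x) \geq x^2/3$ on $[0,1]$; everything else is standard bookkeeping. Since this is a textbook result, I would in practice simply invoke the statement from \cite[\S 5]{MolloyReed} rather than reproducing the derivation.
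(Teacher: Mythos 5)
The paper does not prove this statement at all; it is quoted directly from Molloy--Reed's textbook, and the remainder of the paper simply invokes it as a black box. So any self-contained derivation is by definition a "different route" from the paper's.

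Your Chernoff--Markov outline is standard and sound in structure, but there is a concrete error in the one step you flag as the crux. You claim that $\phi''(x) = 1/(1+x) \geq 2/3$ on $[0,1]$; this is false, since $\phi''(1) = 1/2 < 2/3$. Consequently, the "twofold integration" argument does not establish $\phi(x) \geq x^2/3$ on all of $[0,1]$. The inequality $\phi(x) = (1+x)\ln(1+x) - x \geq x^2/3$ for $x \in [0,1]$ is nonetheless true, but it requires a slightly finer argument: set $h(x) = \phi(x) - x^2/3$, so that $h(0) = h'(0) = 0$ and $h''(x) = 1/(1+x) - 2/3$, which is positive on $[0, 1/2)$ and negative on $(1/2, 1]$. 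Thus $h'$ increases then decreases on $[0,1]$, starts at $0$, and ends at $h'(1) = \ln 2 - 2/3 > 0$, so $h' \geq 0$ throughout $[0,1]$ and hence $h \geq 0$ there. (This is also exactly the place where the hypothesis $t \leq \E[X]$, i.e.\ $x \leq 1$, is needed; your sketch uses it but does not highlight it.) Your treatment of the lower tail, where $\psi(x) = (1-x)\ln(1-x) + x$ really does satisfy $\psi'' = 1/(1-x) \geq 1$ on $[0,1)$, is fine as written. With the scalar inequality repaired, the rest of your argument goes through and the result agrees with what the paper cites.
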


We will also take advantage of Talagrand's inequality.

\begin{theorem}[{Talagrand's Inequality; \cite{molloy2014colouring}}]\label{theorem: Talagrand}
    Let $X$ be a random variable taking values in the non-negative integers, not identically $0$, and defined as a function of $n$ independent trials $T_1, \ldots, T_n$. Suppose that $X$ satisfies the following for some $\mu$, $r > 0$: 
    \begin{enumerate}[label=\textup{(T\arabic*)}]
        \item Changing the outcome of any one trial $T_i$ can change $X$ by at most $\mu$.
        \item For any integer $s \geq 1$, if $X \ge s$, then there exists a set of at most $rs$ trials that certifies $X \ge s$.
    \end{enumerate}
    Then for any $t \geq 0$, we have
    \begin{align*}
        \Pr\Big[\big|X-\E[X]\big| \geq t + 20\mu\sqrt{r\E[X]} + 64\mu^2r\Big]\leq 4\exp{\left(-\frac{t^2}{8\mu^2r(\E[X] + t)}\right)}.
    \end{align*}
\end{theorem}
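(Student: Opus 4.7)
The plan is to reduce the stated inequality to Talagrand's classical convex-distance isoperimetric inequality on product probability spaces, using conditions (T1) and (T2) to translate its geometric content into a tail bound for $X$. For a measurable subset $A$ of the product sample space, recall Talagrand's convex distance $d_T(A, \mathbf{x}) := \sup_{\boldsymbol{\alpha}}\inf_{\mathbf{y} \in A}\sum_{i=1}^n \alpha_i \bbone{x_i \neq y_i}$, where the supremum ranges over unit vectors $\boldsymbol{\alpha} \in \R^n_{\geq 0}$; the isoperimetric inequality states $\E\bigl[\exp(d_T(A, T)^2/4)\bigr] \leq 1/\Pr[T \in A]$, where $T = (T_1, \ldots, T_n)$.

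First I would use (T1) and (T2) to lower-bound $d_T(A, \mathbf{x})$ for a carefully chosen event $A$. Fix reals $a < b$, set $A := \{\mathbf{y} : X(\mathbf{y}) \leq a\}$, and consider any outcome $\mathbf{x}$ with $X(\mathbf{x}) \geq b$. By (T2) there is an index set $I_{\mathbf{x}}$ with $|I_{\mathbf{x}}| \leq rb$ that certifies $X(\mathbf{x}) \geq b$, and by (T1) every $\mathbf{y} \in A$ must disagree with $\mathbf{x}$ on at least $(b-a)/\mu$ coordinates inside $I_{\mathbf{x}}$. Choosing $\boldsymbol{\alpha}$ to be the unit vector uniformly supported on $I_{\mathbf{x}}$ then yields $d_T(A, \mathbf{x}) \geq (b-a)/(\mu\sqrt{rb})$, and plugging this into the isoperimetric inequality produces
\begin{equation*}
\Pr[X \geq b]\,\Pr[X \leq a] \;\leq\; \exp\!\left(-\frac{(b-a)^2}{4\mu^2 rb}\right).
\end{equation*}

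Next I would pivot around the median $m$ of $X$. Taking $a = m$ and $b = m+t$ and using $\Pr[X \leq m] \geq 1/2$ gives $\Pr[X \geq m+t] \leq 2\exp\bigl(-t^2/(4\mu^2 r(m+t))\bigr)$, and symmetric reasoning with $a = m-t$, $b = m$ controls the lower tail. To convert these median-based bounds into concentration around $\E[X]$, I would integrate the tail estimate to show that $|m - \E[X]| = O(\mu\sqrt{r\E[X]} + \mu^2 r)$, and then shift the deviation threshold so that this gap is absorbed into the additive correction $20\mu\sqrt{r\E[X]} + 64\mu^2 r$ appearing in the statement, while also weakening $(m+t)$ to $(\E[X] + t)$ in the denominator of the exponent.

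The main obstacle is pinning down the precise constants $20$, $64$, and $8$ in the bound. Qualitative concentration is essentially automatic once (T1) and (T2) have been converted into a lower bound on $d_T$; the exact numerical factors, however, require careful book-keeping in the median-to-mean conversion, in the choice of threshold $a$, and in the slack introduced when passing from $m$ to $\E[X]$ inside the exponent. This precise calibration is the content of the refined version due to Molloy and Reed (as stated in~\cite{molloy2014colouring}), which is tailored to the regime where sharp constants matter for graph-coloring applications.
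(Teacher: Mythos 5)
The paper does not prove this statement; it is imported as a preliminary, with a citation to Molloy's corrected version of the Molloy--Reed form of Talagrand's inequality, and used as a black box in \S\ref{section: proof of prop}. Your sketch is a sound reconstruction of how that cited result is actually derived: you convert the Lipschitz hypothesis (T1) and the certifiability hypothesis (T2) into a lower bound on Talagrand's convex distance $d_T(\{X \le a\}, \mathbf{x})$ for any outcome $\mathbf{x}$ with $X(\mathbf{x}) \ge b$ (via the interpolating point that agrees with $\mathbf{x}$ on the certificate and with $\mathbf{y}$ elsewhere), feed this into the convex-distance isoperimetric inequality to obtain $\Pr[X \ge b]\,\Pr[X \le a] \le \exp(-(b-a)^2/(4\mu^2 r b))$, pivot around the median $m$, and then shift from $m$ to $\E[X]$. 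One point worth making explicit in that last step --- since it is the only place where the direction of an inequality is not automatic --- is that for a nonnegative random variable $\E[X] \ge m\Pr[X \ge m] \ge m/2$, hence $m + t \le 2(\E[X] + t)$; this is precisely what turns the factor $4$ in the median-centered exponent into the factor $8$ of the stated bound, and without it the replacement of $m+t$ by $\E[X]+t$ in the denominator would not be a weakening of the tail in the direction you need. Modulo the constant-tracking you already flag as the remaining work, your outline is correct and tracks the argument in the source the paper cites rather than anything the paper itself supplies.
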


\section{Tur\'an-type results for $K_{t,t,t}$-free graphs}\label{section: ktttfree}

In this section, we establish two key results about $K_{t,t,t}$-free graphs, as outlined in \S\ref{subsection: proof overview}: 
\begin{enumerate}
\item Every $K_{t,t,t}$-free graph with $n$ vertices and maximum degree $\Delta$ contains at most $Cn\Delta^{2 - 1/t^2}$ triangles for some $C \coloneqq C(t)$; and
\item Every $K_{t,t,t}$-free graph with maximum degree $\Delta$ admits a vertex ordering in which each vertex is the rightmost vertex in at most $Cn\Delta^{2 - 1/t^2}$ triangles for some $C \coloneqq C(t)$.
\end{enumerate}
We begin with the first result.

\begin{lemma} \label{lemma:fewtriangles}
    For every positive integer $t$, there is a constant $C\coloneqq C(t)$ such that the following holds. Let $G$ be a $K_{t,t,t}$-free graph on $n$ vertices with maximum degree at most $\Delta$. Then $G$ has at most $Cn\Delta^{2-1/t^2}$ triangles.
\end{lemma}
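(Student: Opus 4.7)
The plan is to combine a single structural observation with a double counting argument. Let $T$ denote the number of triangles in $G$ and, for $U \subseteq V(G)$, write $\mathrm{cn}(U) \defeq \bigcap_{v \in U} N_G(v)$ for the common neighborhood.

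The key structural step is the observation that for every $t$-subset $S \subseteq V(G)$, the induced subgraph $G[\mathrm{cn}(S)]$ is $K_{t,t}$-free. Indeed, any $K_{t,t}$ with parts $A, B \subseteq \mathrm{cn}(S)$ would, together with $S$, give a copy of $K_{t,t,t}$ in $G$ on parts $(S, A, B)$, which are automatically pairwise disjoint since $S \cap \mathrm{cn}(S) = \0$ in a loopless graph. By the K\H{o}v\'ari--S\'os--Tur\'an theorem this yields $e(G[\mathrm{cn}(S)]) \le C_1(t)\,|\mathrm{cn}(S)|^{2-1/t}$.

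Next I would double count the quantity
\[ X \defeq |\set{(uv, S) : uv \in E(G),\ |S|=t,\ S \subseteq \mathrm{cn}(uv)}|. \]
Summing over $S$ first, $X = \sum_{|S|=t} e(G[\mathrm{cn}(S)])$, and bounding $|\mathrm{cn}(S)|^{2-1/t} \le \Delta^{1-1/t}\,|\mathrm{cn}(S)|$ gives
\[ X \le C_1(t)\,\Delta^{1-1/t}\sum_{|S|=t}|\mathrm{cn}(S)| = C_1(t)\,\Delta^{1-1/t}\sum_{w \in V(G)}\binom{\deg(w)}{t} \le C_2(t)\,n\,\Delta^{t+1-1/t}, \]
where I swapped the order of summation via the identity $\sum_{|S|=t}|\mathrm{cn}(S)| = \sum_w \binom{\deg(w)}{t}$.

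For the matching lower bound on $X$, summing over edges gives $X = \sum_{uv \in E(G)}\binom{|\mathrm{cn}(uv)|}{t}$, while $\sum_{uv}|\mathrm{cn}(uv)| = 3T$. Setting $D \defeq 3T/|E(G)|$, I would split into cases: if $D \le 2t$ then $T \le t|E(G)| \le tn\Delta/2$, which is already at most $n\Delta^{2-1/t^2}$ once $\Delta$ is large enough in terms of $t$; otherwise, by convexity of $\binom{\cdot}{t}$,
\[ X \ge |E(G)|\binom{D}{t} \ge \frac{|E(G)|\,D^t}{2^t\,t!}, \]
so combining with the upper bound on $X$ and using $|E(G)| \le n\Delta/2$ yields
\[ T^t = \left(\frac{D\,|E(G)|}{3}\right)^t \le C_3(t)\,|E(G)|^{t-1}\,n\,\Delta^{t+1-1/t} \le C_4(t)\,n^t\,\Delta^{2t-1/t}, \]
from which $T \le C(t)\,n\,\Delta^{2-1/t^2}$ follows by taking $t$-th roots.

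The main (and really the only) conceptual content lies in the structural observation in the first paragraph; once this is isolated, the rest is essentially bookkeeping with K\H{o}v\'ari--S\'os--Tur\'an and convexity. The mild technical point is the case split on $D$, needed because $\binom{D}{t}$ is not useful when $D < t$, but the trivial regime $D \le 2t$ immediately gives the target bound.
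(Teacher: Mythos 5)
Your proof is correct and takes a genuinely different route from the paper's. The paper argues from scratch: assuming $G$ has more than $Cn\Delta^{2-1/t^2}$ triangles, it applies Jensen's inequality twice (triangles $\Rightarrow$ many labelled $K_{1,1,t}$'s $\Rightarrow$ many labelled $K_{1,t,t}$'s) and finishes by pigeonholing against the at most $n\Delta^{2t-1}$ labelled $K_{t,t}$'s to force a $K_{t,t,t}$. You instead isolate the structural lemma that $G[\mathrm{cn}(S)]$ is $K_{t,t}$-free for every $t$-set $S$ (this is never stated explicitly in the paper, and is a clean reusable fact), invoke K\H{o}v\'ari--S\'os--Tur\'an as a black box for $e(G[\mathrm{cn}(S)])$, and finish with a single Jensen step on the edge side of the double count $X$. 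Morally the two arguments are close, since KST is itself proved by Jensen plus double counting, so you have effectively outsourced one of the paper's two Jensen steps to a named theorem; your version is shorter modulo KST and surfaces a structural observation, while the paper's is fully self-contained. One small technical note: the real function $\binom{x}{t}$ is not convex on all of $[0,\infty)$ for $t\geq 3$, so ``by convexity of $\binom{\cdot}{t}$'' should be read as convexity of the integer sequence $\binom{k}{t}$ (equivalently, of the real function on $[t-1,\infty)$), which is harmless under your case split $D\geq 2t$; the paper sidesteps the same issue by restricting to edges of codegree at least $t$ and using $\binom{q}{t}\geq q^t/t^t$ before applying Jensen to the genuinely convex $x\mapsto x^t$.
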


\begin{proof}\stepcounter{ForClaims}\renewcommand{\theForClaims}{\ref*{lemma:fewtriangles}}
    We may assume that $t\geq 2$, otherwise the statement is trivial. Let $\eps \coloneqq 1/t^2$, $C \coloneqq C(t)$ be sufficiently large, and let $G$ be a $K_{t,t,t}$-free graph on $n$ vertices with maximum degree at most $\Delta$. Suppose that $G$ contains more than $Cn\Delta^{2-\eps}$ triangles.

    \begin{claim} \label{claim:k11t}
        $G$ contains at least $Cn\Delta^{t+1-t\eps}$ labelled copies of $K_{1,1,t}$.
    \end{claim}

    \begin{claimproof}
        For each labelled edge $e$, let $q_e$ be the number of common neighbours in $G$ of the vertices of $e$ (see Fig.~\ref{fig: K_11t} for an illustration). Observe that the number of labelled triangles in $G$ is precisely $\sum_{e} q_e$ and the number of labelled copies of $K_{1,1,t}$ is at least $\sum_{e} \binom{q_e}{t}$. In particular, by assumption, we have $\sum_{e} q_e>Cn\Delta^{2-\eps}$.

        \begin{figure}[htb!]
            \centering
            \begin{tikzpicture}
                \node[circle,fill=black,draw,inner sep=0pt,minimum size=4pt] (u) at (0,0) {};
                \node[circle,fill=black,draw,inner sep=0pt,minimum size=4pt] (v) at (1,0) {};
    
                \node[circle,fill=black,draw,inner sep=0pt,minimum size=4pt] (x1) at (-1.5,2) {};
                \node[circle,fill=black,draw,inner sep=0pt,minimum size=4pt] (x2) at (-0.5,2) {};
                \node[circle,inner sep=0pt,minimum size=4pt] at (0.5, 2) {$\ldots$};
                \node[circle,fill=black,draw,inner sep=0pt,minimum size=4pt] (x3) at (1.5,2) {};
                \node[circle,fill=black,draw,inner sep=0pt,minimum size=4pt] (x4) at (2.5,2) {};
    
                \draw[thick] (u) to node[below,inner sep=1pt,outer sep=1pt,minimum size=4pt,fill=white] {$e$} (v);
    
                \draw[thick, dotted, red] (u) -- (x1) -- (v); 
                \draw[thick, dotted, blue] (u) -- (x2) -- (v);
                \draw[thick, dotted, green] (u) -- (x3) -- (v);
                \draw[thick, dotted, orange] (u) -- (x4) -- (v);
    
                \draw[decoration={brace,amplitude=10pt}, decorate] (-1.6, 2.1) -- node [midway,above,xshift=0pt,yshift=10pt] {$q_e$} (2.6,2.1);
            \end{tikzpicture}
            \caption{An edge $e$ contained in $q_e$ copies of $K_3 \cong K_{1, 1, 1}$.}
            \label{fig: K_11t}
        \end{figure}
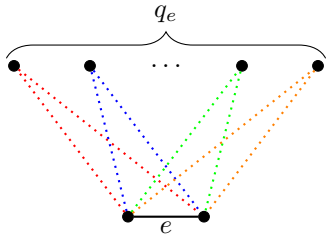

        The number of labelled edges in $G$ is at most $n\Delta$, therefore we have 
        \[\sum_{e:q_e<t} q_e<n\Delta t<\frac{1}{2}Cn\Delta^{2-\eps}.\] 
        Hence,
        \[\sum_{e:q_e\geq t} q_e\geq \frac{1}{2}Cn\Delta^{2-\eps}.\]

        Observe that whenever $q_e\geq t$, we have $\binom{q_e}{t}\geq q_e^t/t^t$. Thus, by Jensen's inequality, and writing $m$ for the number of labelled edges $e$ with $q_e\geq t$, we have
        \begin{align*}
            \sum_{e:q_e\geq t} \binom{q_e}{t}
            &\geq \frac{1}{t^t}\sum_{e:q_e\geq t} q_e^t\geq \frac{1}{t^t}m\left(\frac{\sum_{e: q_e\geq t} q_e}{m}\right)^t\geq \frac{1}{t^t}n\Delta \left(\frac{\sum_{e: q_e\geq t} q_e}{n\Delta }\right)^t \\
            &\geq \frac{1}{t^t}n\Delta  (C\Delta^{1-\eps}/2)^t\geq Cn\Delta^{t+1-t\eps},
        \end{align*}
        where in the last inequality we used the assumption that $C$ is sufficiently large compared to $t$. This completes the proof of Claim \ref{claim:k11t}.
    \end{claimproof}

    \begin{claim} \label{claim:k1tt}
        $G$ contains at least $Cn\Delta^{2t-t^2\eps}$ labelled copies of $K_{1,t,t}$.
    \end{claim}

    \begin{claimproof}
        For each labelled copy $S$ of the star $K_{1,t}$ in $G$, let $q_S$ be the number of common neighbours in $G$ of the vertices of $S$ (see Fig.~\ref{fig: K_1tt} for an illustration). Observe that the number of labelled copies of $K_{1,1,t}$ in $G$ is precisely $\sum_{S} q_S$ and the number of labelled copies of $K_{1,t,t}$ is at least $\sum_{S} \binom{q_S}{t}$. In particular, by Claim \ref{claim:k11t}, we have $\sum_{S} q_S>Cn\Delta^{t+1-t\eps}$.

        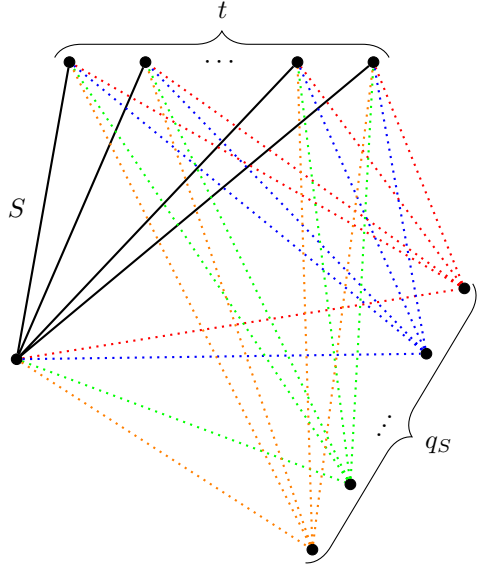
\begin{figure}[htb!]
            \centering
            \begin{tikzpicture}
                
                \node[circle,fill=black,draw,inner sep=0pt,minimum size=4pt] (u) at (0,0) {};
                \path (u) ++(80:4) node[circle,fill=black,draw,inner sep=0pt,minimum size=4pt] (v1) {};
                \path (v1) ++(-30:6) node[circle,fill=black,draw,inner sep=0pt,minimum size=4pt] (x1) {};
    
                \path (v1) ++(0:1) node[circle,fill=black,draw,inner sep=0pt,minimum size=4pt] (v2) {};
                \path (x1) ++(-120:1) node[circle,fill=black,draw,inner sep=0pt,minimum size=4pt] (x2) {};
    
                \path (v2) ++(0:1) node[circle,inner sep=0pt,minimum size=4pt] {$\cdots$};
                \path (x2) ++(-120:1) node[circle,inner sep=0pt,minimum size=4pt, rotate=-30] {$\vdots$};
    
                \path (v2) ++(0:2) node[circle,fill=black,draw,inner sep=0pt,minimum size=4pt] (v3) {};
                \path (x2) ++(-120:2) node[circle,fill=black,draw,inner sep=0pt,minimum size=4pt] (x3) {};
    
                \path (v3) ++(0:1) node[circle,fill=black,draw,inner sep=0pt,minimum size=4pt] (v4) {};
                \path (x3) ++(-120:1) node[circle,fill=black,draw,inner sep=0pt,minimum size=4pt] (x4) {};
    
                \node[circle,inner sep=0pt,minimum size=4pt] at (0, 2) {$S$};
    
                \draw[thick] (u) -- (v1) (u) -- (v2) (u) -- (v3) (u) -- (v4);
    
                \draw[thick, dotted, red] (x1) -- (v1) (u) -- (x1) (x1) -- (v2) (x1) -- (v3) (x1) -- (v4);
                \draw[thick, dotted, blue] (x2) -- (v1) (u) -- (x2) (x2) -- (v2) (x2) -- (v3) (x2) -- (v4);
                \draw[thick, dotted, green] (x3) -- (v1) (u) -- (x3) (x3) -- (v2) (x3) -- (v3) (x3) -- (v4);
                \draw[thick, dotted, orange] (x4) -- (v1) (u) -- (x4) (x4) -- (v2) (x4) -- (v3) (x4) -- (v4);
    
                \draw[decoration={brace,amplitude=10pt}, decorate] (0.5, 4) -- node [midway,right,xshift=-5pt,yshift=18pt] {$t$} (4.9, 4);
                \draw[decoration={brace,amplitude=10pt}, decorate] (6, 1) -- node [midway,right,xshift=10pt,yshift=-8pt] {$q_S$} (3.8, -2.7);
            \end{tikzpicture}
            \caption{A copy $S$ of $K_{1, t}$ contained in $q_S$ copies of $K_{1, 1, t}$.}
            \label{fig: K_1tt}
        \end{figure}

        The number of labelled copies of $K_{1,t}$ in $G$ is at most $n\Delta^t$ since $G$ has maximum degree at most $\Delta$. Therefore, we have 
        \[\sum_{S:q_S<t} q_S<n \Delta^t t<\frac{1}{2}Cn\Delta^{t+1-t\eps}.\]
        Hence,
        \[\sum_{S:q_S\geq t} q_S\geq \frac{1}{2}Cn\Delta^{t+1-t\eps}.\]

        Observe that whenever $q_S\geq t$, we have $\binom{q_S}{t}\geq q_S^t/t^t$. Thus, by Jensen's inequality, and writing $r$ for the number of $t$-leaf labeled stars $S$ with $q_S\geq t$, we have
        \begin{align*}
            \sum_{S:q_S\geq t} \binom{q_S}{t}
            &\geq \frac{1}{t^t}\sum_{S:q_S\geq t} q_S^t\geq \frac{1}{t^t}r\left(\frac{\sum_{S: q_S\geq t} q_S}{r}\right)^t\geq \frac{1}{t^t}n\Delta^t\left(\frac{\sum_{S: q_S\geq t} q_S}{n\Delta^t}\right)^t \\
            &\geq \frac{1}{t^t}n\Delta^t (C\Delta^{1-t\eps}/2)^t\geq Cn\Delta^{2t-t^2\eps}.
        \end{align*}
        This completes the proof of Claim \ref{claim:k1tt}.
    \end{claimproof}

    Now observe that, since the maximum degree of $G$ is at most $\Delta$, the number of labeled copies of $K_{t,t}$ in $G$ is at most $n\Delta^{2t-1}$. Since \[Cn\Delta^{2t-t^2\eps}=Cn\Delta^{2t-1}>n\Delta^{2t-1}\cdot t,\] it follows by Claim \ref{claim:k1tt} that there exists a copy of $K_{t,t}$ which extends to at least $t$ copies of $K_{1,t,t}$. The union of these $t$ extensions forms a $K_{t,t,t}$ in $G$, which is a contradiction.
\end{proof}

Using Lemma~\ref{lemma:fewtriangles}, we show that $K_{t,t,t}$-free graphs admit a vertex ordering in which the left-neighborhood of every vertex is sparse. This will play a key role in our proof of Theorem~\ref{theorem: coloring}.

\begin{lemma} \label{lemma:find degeneracy order}
    For every positive integer $t$, there is a constant $C\coloneqq C(t)$ such that the following holds. Let $G$ be a $K_{t,t,t}$-free graph with maximum degree at most $\Delta$. Then, there is an ordering of the vertices of $G$ such that every vertex $u\in V(G)$ is contained in at most $3C\Delta^{2-1/t^2}$ triangles $uvw$ with both $v$ and $w$ preceding $u$ in the ordering.
\end{lemma}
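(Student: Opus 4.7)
The plan is to construct the required ordering by a greedy ``remove-last'' procedure, using Lemma~\ref{lemma:fewtriangles} at each step to locate a vertex that participates in few triangles. Let $C \coloneqq C(t)$ be the constant provided by Lemma~\ref{lemma:fewtriangles}, and set $n \coloneqq |V(G)|$. Write $G_n \coloneqq G$, and for each $k = n, n-1, \dots, 2$, proceed inductively as follows. Given $G_k$, note that $G_k$ is an induced subgraph of $G$, so it is $K_{t,t,t}$-free and has maximum degree at most $\Delta$. Applying Lemma~\ref{lemma:fewtriangles} to $G_k$ bounds the total number of triangles in $G_k$ by $C k \Delta^{2-1/t^2}$, so by a double-counting argument the average number of triangles containing a fixed vertex of $G_k$ is at most $3C\Delta^{2-1/t^2}$.

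Consequently, there exists a vertex $v_k \in V(G_k)$ contained in at most $3C\Delta^{2-1/t^2}$ triangles of $G_k$. Declare $v_k$ to be the $k$-th vertex of the ordering (i.e., the vertex in position $k$ when read from left to right), and set $G_{k-1} \coloneqq G_k - v_k$. After $n-1$ such steps we are left with a single vertex, which we place in position $1$. This produces an ordering $v_1, v_2, \dots, v_n$ of $V(G)$.

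It remains to verify the defining property. Fix any vertex $u \in V(G)$, and let $k$ be the position of $u$ in the ordering, so that $u = v_k$. By construction, all vertices occupying positions $k+1, \dots, n$ have been deleted before $u$ was chosen, and therefore $V(G_k) = \{v_1, \dots, v_k\}$. Hence every triangle $uvw$ of $G$ with both $v$ and $w$ preceding $u$ in the ordering is a triangle of $G_k$ containing $u$, and by the choice of $v_k$ there are at most $3C\Delta^{2-1/t^2}$ such triangles. The main (and only) subtlety is ensuring that Lemma~\ref{lemma:fewtriangles} remains applicable throughout the procedure, which follows immediately from the fact that induced subgraphs of $K_{t,t,t}$-free graphs with maximum degree at most $\Delta$ retain both properties.
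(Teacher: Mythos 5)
Your proof is correct and takes essentially the same approach as the paper's: the paper presents the same greedy ``remove a low-triangle vertex and place it last'' argument as an induction on the number of vertices (via Lemma~\ref{lemma:fewtriangles}), whereas you unwind it into an explicit iterative procedure with an averaging/double-counting step to locate the vertex. The two are the same argument in different clothing.
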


\begin{proof}
    Let $C$ be the constant given by Lemma \ref{lemma:fewtriangles}. Using induction on the number of vertices of $G$, we will prove Lemma~\ref{lemma:find degeneracy order} for this choice of $C$. The base case (where $G$ has at most one vertex) is trivial.

    Let $G$ be a $K_{t,t,t}$-free graph on $n$ vertices with maximum degree at most $\Delta$. There exists a vertex $z$ which is contained in fewer than $3C\Delta^{2-1/t^2}$ triangles, for otherwise $G$ would contain more than $Cn\Delta^{2-1/t^2}$ triangles, contradicting Lemma \ref{lemma:fewtriangles}.
    
    Let $G'\coloneqq G-z$. Now $G'$ is a $K_{t,t,t}$-free graph on $n-1$ vertices and has maximum degree at most~$\Delta$. Hence, by the induction hypothesis, $G'$ has a vertex-ordering $\sigma$ such that every vertex $u\in V(G')$ is in at most $3C\Delta^{2-1/t^2}$ triangles $uvw$ in $G'$ with both $v$ and $w$ preceding $u$ in $\sigma$. Now let $\tau$ be the ordering of $V(G)$ that lists the vertices of $G'$ first, in the order given by $\sigma$, followed by $z$.
 It is then straightforward to verify that every vertex $u \in V(G)$ lies in at most $3C\Delta^{2 - 1/t^2}$ triangles $uvw$ in $G$ in which both $v$ and $w$ precede $u$ in the ordering $\tau$. Indeed, for $u \in V(G')$, any such triangle must lie entirely within $G'$, and the statement follows by the choice of $\sigma$. For $u=z$, the statement follows immediately by the choice of $z$. This completes the induction step and the proof of the lemma.
\end{proof}
\section{Coloring $K_{t, t, t}$-free graphs}\label{section: coloring}

In this section, we will prove Theorem~\ref{theorem: coloring}, which we restate below for convenience.

\begin{theorem*}[{Restatement of Theorem~\ref{theorem: coloring}}]
    For all $t \in \N$, there exists $\Delta_0 \in \N$ such that the following holds for all $\Delta\geq \Delta_0$.
    Let $G$ be a $K_{t, t, t}$-free graph of maximum degree $\Delta$.
    Then
    \[\chi(G) = O\left(\frac{t^6\Delta}{\log \Delta}\right).\]
\end{theorem*}

As mentioned in \S\ref{subsection: proof overview}, our proof builds on the ideas of Alon, Krivelevich, and Sudakov \cite[Theorem 2.1]{AKSConjecture} regarding the coloring of locally sparse graphs. However, their result does not apply directly in our setting, as the neighborhood of a vertex in a $K_{t,t,t}$-free graph can be very dense.

We will use the following celebrated result of Johansson (see \cite[Ch. 14]{MolloyReed} for a textbook treatment of the argument; see also \cite{Molloy, bernshteyn2019johansson} for improvements in terms of the leading constant factor).

\begin{theorem}[{\cite{Joh_triangle}}]\label{theorem: johansson}
    Let $G$ be a $K_3$-free graph of maximum degree $\Delta$.
    Then,
    \[\chi(G) = O\left(\frac{\Delta}{\log \Delta}\right).\]
\end{theorem}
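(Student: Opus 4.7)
The plan is to prove the statement via the \emph{semi-random (nibble) method}, iteratively refining a list assignment until the residual problem can be finished by one application of the Lov\'asz Local Lemma. Set $k \coloneqq \lceil C\Delta/\log\Delta\rceil$ for a sufficiently large absolute constant $C$; since $\chi(G) \leq \chi_\ell(G)$ it suffices to show that $G$ is $k$-list-colorable. Fix an arbitrary list assignment $L_0\colon V(G)\to\binom{[k]}{k}$, and maintain two parameters $\ell_i$ and $t_i$ with the invariant that at iteration $i$ every uncolored vertex $v$ has a list $L_i(v)$ of size at least $\ell_i$, while for each color $c\in L_i(v)$ at most $t_i$ of the uncolored neighbors of $v$ still have $c$ in their lists. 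The quantity we control is the ratio $t_i/\ell_i$, which we keep at most $1-\eta$ for some fixed $\eta>0$.

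One nibble step proceeds as follows: each uncolored vertex $v$ independently picks a tentative color $C(v)\in L_i(v)$ uniformly at random; we color $v$ with $C(v)$ unless some neighbor also selects $C(v)$, in which case $v$ remains uncolored. This produces a proper partial coloring, and $L_{i+1}(v)$ is obtained from $L_i(v)$ by deleting every color now used by a neighbor. The crucial role of triangle-freeness appears in the expectation analysis: for a color $c\in L_i(v)$ and a neighbor $u\in N(v)$, whether $u$ successfully takes $c$ depends only on the trials at $u$ and at $N(u)\setminus\{v\}$, which is disjoint from $N(v)\setminus\{u\}$. This near-independence between the fates of different neighbors of $v$ yields $\E[\,|L_{i+1}(v)|\,]\approx(1-q_i)\ell_i$ and $\E[\,t_{i+1}(v,c)\,]\approx(1-q'_i)t_i$ with $q'_i\geq q_i$, so the ratio $t/\ell$ is preserved from one iteration to the next. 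Without triangle-freeness, clustering among the neighbors of $v$ allows a popular color to remain popular, driving the ratio toward $1$.

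For concentration I will apply Talagrand's inequality (Theorem~\ref{theorem: Talagrand}) to $|L_{i+1}(v)|$ and to each $t_{i+1}(v,c)$: both are functions of the independent random trials at vertices in $N^{\leq 2}(v)$, are $O(1)$-Lipschitz in each trial, and any lower bound of value $s$ is certifiable by $O(s)$ trials. This yields deviations of order $O(\sqrt{\ell_i\log\ell_i})$ with failure probability $\ell_i^{-\omega(1)}$. Each bad event ``the invariant fails at $v$'' depends only on trials in a ball of radius two around $v$, so the dependency graph has polynomial-in-$\Delta$ degree and the \LLL{} (Theorem~\ref{theorem: LLL}) provides, with positive probability, a realization in which all invariants hold simultaneously at step $i+1$. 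Iterating for $\Theta(\log\Delta)$ steps drives $\ell_i$ down to a large constant (depending on $C$) while preserving $t_i\leq(1-\eta)\ell_i$; one final LLL application then completes the list coloring of the residual instance.

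The main obstacle will be the tight accounting in the expectation and concentration step: over $\Theta(\log\Delta)$ iterations the multiplicative errors from Talagrand must not accumulate beyond what a large choice of $C$ can absorb. This is precisely where the quasi-independence supplied by the absence of triangles is indispensable, and why the analogous result for $K_r$-free graphs loses an additional $\log\log\Delta$ factor in the best general bound.
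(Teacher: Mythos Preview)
The paper does not prove this theorem: it is quoted as a known result of Johansson and used as a black box in the proof of Theorem~\ref{theorem: coloring}, with a pointer to \cite[Ch.~14]{MolloyReed} for a textbook treatment. There is therefore no ``paper's own proof'' to compare against.

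Your sketch is recognizably the standard semi-random approach attributed to Johansson (and exposited in Molloy--Reed), so in spirit you are reproducing the argument the paper is citing. Two places would need tightening before this becomes a proof rather than a plan. First, maintaining $t_i/\ell_i\le 1-\eta$ for an arbitrary fixed $\eta>0$ is not enough for the ``final LLL'' step: completing a list coloring from lists of size $\ell$ with per-color degree at most $t$ via the Local Lemma needs something like $\ell\ge 8t$, not merely $t<\ell$. What the iteration actually delivers (and what you should track) is that the ratio stays of order $t_0/\ell_0=\Delta/k=O((\log\Delta)/C)$, which is small once $C$ is large; stating the invariant as $t_i/\ell_i\le 1-\eta$ throws this away. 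Second, you cannot iterate until $\ell_i$ is a ``large constant'': at that scale the Talagrand fluctuations $O(\sqrt{\ell_i\log\ell_i})$ are comparable to the mean, so the invariant can no longer be re-established by the LLL at each step. The standard fix is to halt while $\ell_i$ is still polylogarithmic in $\Delta$ (so that concentration is meaningful) and then invoke the finishing LLL, which only needs $\ell\ge C't$ together with $\ell$ at least a mild function of $\log(\ell t)$.
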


As we show at the end of this section, Theorem \ref{theorem: coloring} follows from the following key lemma, together with Theorem \ref{theorem: johansson}. Here and below, we omit floor and ceiling signs whenever this does not make substantial difference.

\begin{lemma}\label{lemma: partition triangle free}
    For each $t \in \N$, there exists some $\Delta_0\coloneqq \Delta_0(t)$ such that the following holds for every $\Delta\geq \Delta_0$. Let $G$ be a $K_{t, t, t}$-free graph of maximum degree $\Delta$.
    Then there exists a partition $V_1\sqcup \cdots \sqcup V_k$ of the vertex set of $G$ such that $k = 120t^4\Delta^{1 - 1/(10t^2)}$, and for every $1 \leq i \leq k$, the induced subgraph $G[V_i]$ is $K_3$-free and has maximum degree at most $2\Delta^{1/(10t^2)}$.
\end{lemma}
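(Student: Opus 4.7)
The proof strategy is to construct the required partition probabilistically, leveraging the vertex ordering supplied by Lemma~\ref{lemma:find degeneracy order}. The first step is to apply that lemma to obtain an ordering $\sigma$ of $V(G)$ in which, for every vertex $u$, the induced subgraph on the left-neighbourhood $N^-(u)$ contains at most $M := 3C\Delta^{2-1/t^2}$ edges---equivalently, $u$ is the rightmost vertex (under $\sigma$) of at most $M$ triangles. Since every triangle has a unique rightmost vertex, it will suffice to ensure that no vertex ever plays the role of the rightmost vertex of a monochromatic triangle.

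Next, I would assign each vertex $u \in V(G)$ an independent uniformly random colour $c(u) \in [k]$, define $V_i := c^{-1}(i)$, and introduce the following bad events for every $u \in V(G)$:
\begin{itemize}
    \item $A_u$: the in-colour degree $|N(u) \cap V_{c(u)}|$ exceeds $2\Delta^{1/(10t^2)}$;
    \item $B_u$: there exist $v, w \in N^-(u)$ with $vw \in E(G)$ and $c(u) = c(v) = c(w)$ (so that $u$ is the rightmost vertex of a monochromatic triangle).
\end{itemize}
The absence of all $A_u$ and $B_u$ simultaneously yields the desired partition: the maximum degree condition is encoded by the $A_u$, and $K_3$-freeness of each $G[V_i]$ follows from the $B_u$ condition together with the uniqueness of the rightmost vertex in each triangle.

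For the probability bounds, the Chernoff bound (Theorem~\ref{theorem: chernoff}) applied to the $\mathrm{Bin}(\deg(u), 1/k)$-distributed random variable $|N(u) \cap V_{c(u)}|$, whose mean is at most $\Delta/k = \Delta^{1/(10t^2)}/(120t^4)$, gives $\Pr[A_u] \le \exp(-\Omega(\Delta^{1/(10t^2)}))$. A union bound over the at most $M$ triangles with $u$ as rightmost vertex, each monochromatic with probability $1/k^2$, gives $\Pr[B_u] \le M/k^2$. Both events are determined by the colours of vertices in $N[u]$, so each is mutually independent of all but $O(\Delta^2)$ others.

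The main technical obstacle is verifying the Lov\'asz Local Lemma condition (Theorem~\ref{theorem: LLL}). The plan is to apply its asymmetric form with weights chosen so that the $A_u$ events contribute negligibly---owing to their super-polynomially small probability---while the harder verification for the $B_u$ events is driven by the interplay between $\Pr[B_u] \le M/k^2$ and the dependency structure. Here, the ordering $\sigma$ is used decisively: the support of $B_u$ is confined to $\{u\} \cup V(E(G[N^-(u)]))$, so dependencies between $B_u$ and $B_{u'}$ can be organised by charging each shared variable to the unique rightmost vertex of the corresponding triangle, avoiding double-counting and exploiting the sparsity guarantee of Lemma~\ref{lemma:find degeneracy order}. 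With the choice $k = 120t^4\Delta^{1 - 1/(10t^2)}$ and the constants calibrated appropriately, this verification goes through for all $\Delta \ge \Delta_0(t)$, yielding with positive probability the desired partition.
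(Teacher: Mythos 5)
Your first step---using Lemma~\ref{lemma:find degeneracy order} to obtain an ordering with sparse left-neighborhoods, then partitioning by a random coloring---matches the paper. But the rest of your plan has a concrete gap that cannot be repaired: the Lov\'asz Local Lemma condition for the events $B_u$ simply does not hold with $k = 120t^4\Delta^{1-1/(10t^2)}$ colors, in any form (symmetric or asymmetric). As you compute, $\Pr[B_u] \le M/k^2$ with $M = 3C\Delta^{2-1/t^2}$, which gives $\Pr[B_u] = \Theta_t(\Delta^{-4/(5t^2)})$. Your event $B_u$ is determined by the colors of $u$ and the vertices of $N^-(u)$ (up to $\Delta+1$ vertices), so $B_u$ is dependent on $\Theta(\Delta^2)$ other events $B_{u'}$ in the worst case; the ``charging to the rightmost vertex'' idea you sketch does not reduce this, since a single shared vertex already creates dependence regardless of how you organize the bookkeeping. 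The product $\Pr[B_u]\cdot\Delta^2 = \Theta_t(\Delta^{2-4/(5t^2)})$ blows up with $\Delta$, so the asymmetric LLL also fails: there is no choice of weights $x_{B_u}$ for which $\Pr[B_u] \le x_{B_u}\prod(1-x_{B_{u'}})$ when the product on the right is over $\Theta(\Delta^2)$ terms and $\Pr[B_u] \gg \Delta^{-2}$. In short, you cannot eliminate \emph{all} monochromatic rightmost triangles in a single LLL pass at this color budget.

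The paper sidesteps this by asking for much less from the LLL and then performing a deterministic second-stage refinement. Instead of forbidding any monochromatic triangle at $u$, it uses the LLL only to ensure that after the random coloring, $G[N_L(u)\cap U_{c(u)}]$ has \emph{bounded} structural complexity: at most $\kappa = 35t^2$ same-colored ``bad'' left-neighbors (those with $\ge \Delta^{1-1/(2t^2)}$ neighbors inside $N_L(u)$), and at most $\mu = 100t^4$ edges among same-colored ``good'' left-neighbors. The crucial point is that the event ``the good neighbors span at least $\mu$ edges'' is far rarer than the event ``there is at least one monochromatic edge'': by the matching/degree dichotomy (Vizing) the probability gets raised to the power $\sqrt{\mu} = 10t^2$, turning $\Delta^{-4/(5t^2)}$ into $\Delta^{-8}$, which comfortably beats the $\Theta(\Delta^2)$ dependency degree. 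With this in hand, each vertex $v \in U_i$ has a small ``blame set'' $S_{v,i}$ of at most $\kappa+\mu$ left-neighbors whose deletion makes $G[N_L(v)\cap U_i]$ edgeless; the auxiliary graph $H_i$ on $U_i$ with $uv \in E(H_i)$ iff $u \in S_{v,i}$ or $v \in S_{u,i}$ is $(\kappa+\mu)$-degenerate (ordered by $\sigma$), so it can be properly colored with $\kappa+\mu+1$ colors, refining each $U_i$ into triangle-free pieces. Your proposal is missing exactly this two-stage structure: relaxing the LLL goal from ``zero triangles'' to ``boundedly many conflicts,'' plus the degeneracy-based cleanup.
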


\begin{proof}\stepcounter{ForClaims}\renewcommand{\theForClaims}{\ref*{lemma: partition triangle free}}
    We will assume wherever necessary that $\Delta$ is sufficiently large in terms of $t$.

    By Lemma~\ref{lemma:find degeneracy order}, there exists an ordering $(v_1, \ldots, v_n)$ of $V(G)$ such that for each vertex $v_i$, the subgraph $G[N_L(v_i)]$ contains at most $3C\Delta^{2-1/t^2}$ edges, where $N_L(v_i) \coloneqq N(v_i) \cap \set{v_1, \ldots, v_{i-1}}$ is the ``left neighborhood'' of $v_i$.
    For the remainder of this proof, fix such an ordering.\footnote{We remark that the consideration of such an ordering and the fact that we work with left neighborhoods is the main difference between our argument and that of \cite[Lemma 2.3]{AKSConjecture}.}

    For each vertex $v$, we say that $u \in N_L(v)$ is \textit{bad} if $|N(u) \cap N_L(v)| \geq \Delta^{1 - 1/(2t^2)}$ and \textit{good} otherwise.
    A simple double counting argument shows that $v$ has at most $\Delta^{1-1/(5t^2)}$ bad neighbors in $N_L(v)$.
    
    For $\ell \coloneqq \Delta^{1-1/(10t^2)}$, construct a partition $U_1\sqcup \cdots \sqcup U_\ell$ by coloring each vertex independently and uniformly at random from $[\ell]$.
    Define the following bad events for each vertex:
    \begin{enumerate}
        \item $A_v$: more than $2\Delta/\ell$ vertices in $N(v)$ receive the same color as $v$. 
        \item $B_v$: more than $\kappa = 35t^2$ bad neighbors of $v$ receive the same color as $v$.
        \item $C_v$: the set of good neighbors of $v$ that receive the same color as $v$ span at least $\mu = 100t^4$ edges.
    \end{enumerate}
    We will use the \hyperref[theorem: LLL]{Lov\'asz Local Lemma} (Theorem~\ref{theorem: LLL}) to show that there is an outcome where none of the above bad events occur.
    Let us begin by computing the probabilities of each of these events.
    We will do so through a sequence of claims.

    \begin{claim}\label{claim: prob A_v}
        $\Pr[A_v] \leq \Delta^{-3}$.
    \end{claim}

    \begin{claimproof}
        Let $X_v$ be the random variable denoting the degree of $v$ in its color class.
        Clearly, $X_v$ is binomially distributed with mean $|N(v)|/\ell \leq \Delta/\ell$.
        By the \hyperref[theorem: chernoff]{Chernoff bound} (Theorem~\ref{theorem: chernoff}), we have
        \begin{align*}
            \Pr[A_v] &= \Pr[X_v \geq 2\Delta/\ell] \\
            &\leq \Pr[\mathrm{Bin}(\Delta,1/\ell) \geq 2\Delta/\ell] \\
            &\leq \Pr[|\mathrm{Bin}(\Delta,1/\ell) - \Delta/\ell| \geq \Delta/\ell] \\
            &\leq 2\exp\left(-\frac{\Delta}{3\ell}\right) \leq \Delta^{-3},
        \end{align*}
        as desired.
    \end{claimproof}
    
    \vspace{0.1in}

    \begin{claim}\label{claim: prob B_v}
        $\Pr[B_v] \leq \Delta^{-3}$.
    \end{claim}

    \begin{claimproof}
        As observed earlier, $v$ has at most $\Delta^{1-1/(5t^2)}$ bad neighbors.
        It follows that
        \begin{align*}
            \Pr[B_v] &\leq \binom{\Delta^{1-1/(5t^2)}}{\kappa}\ell^{-\kappa} \\
            &\leq \left(\frac{e\Delta^{1-1/(5t^2)}}{\kappa\,\ell}\right)^{\kappa} \leq \Delta^{-3},
        \end{align*}
        as desired.
    \end{claimproof}
    
    \vspace{0.1in}

    \begin{claim}\label{claim: prob C_v}
        $\Pr[C_v] \leq \Delta^{-3}$.
    \end{claim}

    \begin{claimproof}
        To bound $\Pr[C_v]$, we use the following simple observation made by Alon, Krivelevich, and Sudakov \cite{AKSConjecture}: 
        if a graph has at least $\mu$ edges, then 
        \begin{itemize}
            \item either there is a vertex of degree at least $\sqrt{\mu}$, or
            \item the graph is $\sqrt{\mu}$-edge-colorable by Vizing's theorem \cite{Vizing}, implying there is a matching of size $\sqrt{\mu}$.
        \end{itemize}
        Therefore, $C_v$ can occur only if
        \begin{enumerate}[label=(O\arabic*)]
            \item\label{item: matching} we have a matching of size $\sqrt{\mu}$ on the good neighbors of $v$ which have the same color as $v$, or
            \item\label{item: degree} there is a good neighbor $u$ of $v$ such that $u$ has at least $\sqrt{\mu}$ neighbors in $N_L(v)$ of the same color as $u$ and $v$.
        \end{enumerate}
         The probability of \ref{item: matching} is at most
         \[\binom{3C\Delta^{2-1/t^2}}{\sqrt{\mu}}\ell^{-2\sqrt{\mu}} \leq \left(\frac{3eC\Delta^{2-1/t^2}}{\ell^2\sqrt{\mu}}\right)^{\sqrt{\mu}} \leq \frac{\Delta^{-3}}{2},\]
         and the probability of \ref{item: degree} is at most
         \[\Delta\binom{\Delta^{1-1/(2t^2)}}{\sqrt{\mu}}\ell^{-\sqrt{\mu}} \leq \Delta\left(\frac{e\,\Delta^{1-1/(2t^2)}}{\ell\sqrt{\mu}}\right)^{\sqrt{\mu}} \leq \frac{\Delta^{-3}}{2}.\]
         Putting both cases together completes the proof.
    \end{claimproof}

    \vspace{0.1in}

    Note that each of the events $A_v$, $B_v$, and $C_v$ is independent of all but at most $3\Delta^2+2$ others, as it is independent of all events $A_u$, $B_u$, $C_u$ corresponding to vertices $u$ whose distance from $v$ is bigger than $2$.
    By Claims~\ref{claim: prob A_v}, \ref{claim: prob B_v}, and \ref{claim: prob C_v}, we may apply the \hyperref[theorem: LLL]{Lov\'asz Local Lemma}  (Theorem~\ref{theorem: LLL}) with $p = \Delta^{-3}$ and $d_{LLL} = 3\Delta^2+2$ to show that there exists an outcome such that none of the events $A_v$, $B_v$, $C_v$ occur.
    In particular, for each $v\in U_i$ we have $|N(v)\cap U_i|\leq 2\Delta/\ell=2\Delta^{1/(10t^2)}$ (since $A_v$ does not occur), and we can remove a set $S_{v, i}$ of at most $\kappa + \mu$ vertices from $G[N_L(v)\cap U_i]$ such that the remaining graph has no edges (since $B_v$ and $C_v$ do not occur).
    Fix such an outcome.

    Consider an arbitrary part $U_i$.
    Define a graph $H_i$ where $V(H_i) = U_i$ and $uv \in E(H_i)$ if and only if $u \in S_{v, i}$ or $v \in S_{u, i}$.
    Note that $H_i$ is $(\kappa + \mu)$-degenerate. Indeed, the degeneracy order is determined by the sub-ordering of $v_1,\dots,v_n$ on the set $U_i$, noting that $u\in S_{v,i}$ implies that $u$ precedes $v$ in that ordering. Consequently, $H_i$ is $(\kappa + \mu + 1)$-colorable, which implies that $V(H_i) = U_i$ can be partitioned into $\kappa + \mu + 1$ parts $U_i^j$, for $j \in [\kappa + \mu + 1]$, such that each part induces a $K_3$-free subgraph of $G$. Moreover, since $U_i^j\subseteq U_i$, the induced subgraph $G[U_i^j]$ has maximum degree at most $2\Delta^{1/(10t^2)}$. Since $(\kappa + \mu + 1) \ell \le 120 t^4 \Delta^{1 - 1/(10 t^2)} = k$, it follows that the sets $U_i^j$, for $j \in [\kappa + \mu + 1]$ and $i \in [\ell]$, form the desired partition of the vertex set of $G$. 
\end{proof}

We are now ready to prove Theorem~\ref{theorem: coloring}.

\begin{proof}[Proof of Theorem~\ref{theorem: coloring}]
    Fix the partition $V_1 \sqcup \cdots \sqcup V_k$ of the vertex set of $G$ given by Lemma~\ref{lemma: partition triangle free}.
    By Theorem~\ref{theorem: johansson}, we can color each subgraph $G[V_i]$ with at most
    \[O\left(\frac{2\Delta^{1/(10t^2)}}{\log\left(2\Delta^{1/(10t^2)}\right)}\right) = O\left(\frac{t^2\Delta^{1/(10t^2)}}{\log \Delta}\right)\]
    colors.
    Using a disjoint set of colors for each set $V_i$, we have
    \begin{align*}
        \chi(G) &= O\left(\frac{k\,t^2\Delta^{1/(10t^2)}}{\log \Delta}\right) \\
        &=O\left(\frac{t^6\Delta}{\log \Delta}\right),
    \end{align*}
    completing the proof.
\end{proof}

\section{Independent sets in $K_{t, t, t}$-free graphs}\label{section: ISET proof}

In this section, we will prove Theorem~\ref{theorem: iset}.
We derive it from the following statement:

\begin{theorem}\label{theorem: iset with large n}
    For all $\epsilon > 0$ and $t \in \N$, there exists $d_0 \in \N$ such that the following holds for all $d\geq d_0$.
    There exists $n_0 \in \N$ such that for all $n \geq n_0$ any $n$-vertex $K_{t, t, t}$-free graph $G$ of average degree $d$ satisfies
    \[\alpha(G) \geq \left(1 - \epsilon\right)\dfrac{n\log d}{d}.\]
\end{theorem}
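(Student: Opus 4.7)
The plan is to carry out the nibble-with-cleaning procedure \ref{DJM1}--\ref{DJM3} sketched in the proof overview. Starting from $G_0 = G$, at each step I either remove a single vertex whose degree exceeds $(1+\epsilon)d(G_i)$ (a \emph{cleaning} step), or, when $\Delta(G_i) \le (1+\epsilon)d(G_i)$, sample a $p_i$-random subset $A_i$ with $p_i = \epsilon/d(G_i)$, extract a maximum independent set $I_i \subseteq A_i$, delete $N[A_i]$ and also kill each surviving vertex by an independent ``equalizing coin flip'' (so that the new maximum degree is immediately controlled by the new average degree). The procedure tracks $N_i \coloneqq |V(G_i)|$, $D_i \coloneqq d(G_i)$, $\Delta_i \coloneqq \Delta(G_i)$, the set $S$ of indices at which a nibble is performed, and the cleaning-gain product $R_i \coloneqq \prod_{j < i,\, j\notin S} N_j/(N_j-1)$. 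The process halts at the first time $T$ at which one of $D_T < D_1^{\Theta(\epsilon)}$, $R_T \ge D_1^{\Theta(\epsilon)}$, or $|S| = (1-\epsilon)\log D_1 / \epsilon$ holds.

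The core structural statements to establish are the two invariants (\ref{eq: lemma on n/d and n}): $N_i/D_i \ge (1-\epsilon) R_i^{\epsilon}\, N_1/D_1$ and $N_i \ge N_1/D_1^{1-\Theta(\epsilon)}$ for every $i \le T$. For the ratio, I would analyze the two step-types separately. Deleting a single vertex $v$ of degree greater than $(1+\epsilon)D_i$ drops the sum of degrees by at least $2(1+\epsilon)D_i$ and the vertex count by one; a short computation then gives $N_{i+1}/D_{i+1} \ge (N_i/D_i)\cdot (N_i/(N_i-1))^{\epsilon}$, explaining the exponent in the definition of $R_i$. For a nibble step, because $\Delta_i \le (1+\epsilon)D_i$ the factor $\gamma_i \approx \exp(-\epsilon \Delta_i/D_i)$ by which both $N_i$ and $D_i$ shrink is the same up to lower-order terms, so $N_i/D_i$ is essentially preserved; the tight degree control is what keeps the $\gamma_i$'s bounded below by $\exp(-(1+\epsilon)\epsilon)$, which in turn is what allows as many as $(1-\epsilon)\log D_1/\epsilon$ nibble steps before the bound $N_i \ge N_1/D_1^{1-\Theta(\epsilon)}$ becomes tight.

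Given the invariants, the three termination cases fall out cleanly. If $D_T < D_1^{\Theta(\epsilon)}$, the greedy bound $\alpha(G_T) \ge N_T/(D_T+1)$ combined with $N_T \ge N_1/D_1^{1-\Theta(\epsilon)}$ already yields an independent set of the required size. If $R_T \ge D_1^{\Theta(\epsilon)}$, then $N_T/D_T \ge (1-\epsilon)R_T^{\epsilon}N_1/D_1 \ge D_1^{\Theta(\epsilon^2)} N_1/D_1$ and again a greedy independent set in $G_T$ suffices. If instead $|S|$ reaches its cap, I use $|I_i| \gtrsim \epsilon N_i/D_i \gtrsim \epsilon(1-\epsilon)N_1/D_1$ at every nibble step (valid because $R_i \ge 1$) and sum over $i \in S$ to obtain $|\bigcup_i I_i| \ge (1-O(\epsilon))(N_1 \log D_1)/D_1$. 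Assembling these three cases gives the bound of Theorem~\ref{theorem: iset with large n}.

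The main obstacle, and what must be packaged as a separate proposition (labelled \ref{prop: alg iset} in the excerpt) for a single nibble step, is showing that after the random deletion plus equalizing coin flips the following hold simultaneously with positive probability: $N_{i+1} \ge (1-o(1))\gamma_i N_i$, $D_{i+1} \le (1+o(1))\gamma_i D_i$, $\Delta_{i+1} \le (1+\epsilon)D_{i+1}$, and $|I_i| \ge (1-o(1))\epsilon N_i/D_i$. The expectations are routine, but establishing concentration of the edge count in $G_{i+1}$ is the crux: a naive Talagrand application fails because a single vertex in the random set $A_i$ can affect the surviving degree of each of its triangle-neighbours. This is where Lemma~\ref{lemma:fewtriangles} enters decisively — the bound of $O(N_i \Delta_i^{2-1/t^2})$ triangles in any $K_{t,t,t}$-free graph of maximum degree $\Delta_i$ means that ``bad'' vertices participating in atypically many triangles are rare, and the equalizing coin flips remove the remaining degree outliers. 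With the triangle bound in hand, I would apply Talagrand's inequality (Theorem~\ref{theorem: Talagrand}) to the edge count in $G_{i+1}$ with Lipschitz constant $O(\Delta_i)$ and certificate parameter $O(1)$, which gives concentration up to error $o(\gamma_i D_i N_i)$ provided $n$ is taken sufficiently large in terms of $d$ and $\epsilon$ — precisely the hypothesis of Theorem~\ref{theorem: iset with large n}. The independent-set size $|I_i|$ is concentrated by a separate, easier Talagrand application using the fact that $G_i[A_i]$ is sparse in expectation.
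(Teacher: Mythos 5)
Your proposal follows essentially the same route as the paper: the nibble-with-cleaning alternation, the invariants on $N_i/D_i$ and $N_i$, the same three termination conditions and the same three-case endgame, and the crucial use of the triangle bound (Lemma~\ref{lemma:fewtriangles}) to make Talagrand work for the edge count after a nibble. However, there is one genuine conceptual error worth flagging. You describe the equalizing coin flips as a device ``so that the new maximum degree is immediately controlled by the new average degree,'' and you list $\Delta_{i+1}\le (1+\eps)D_{i+1}$ among the four conclusions that your version of Proposition~\ref{prop: alg iset} should deliver with positive probability. Neither of these is correct, and the second cannot be proved: after deleting $N[A_i]$ the maximum degree of the surviving graph is \emph{not} concentrated, precisely because of possible $4$-cycles, and the coin flips do nothing to fix this. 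Their actual purpose is to make every vertex survive with the \emph{same} probability $\gamma(1-p)$ (a vertex $v$ is retained with probability $(1-p)^{\deg v}\cdot \gamma/(1-p)^{\deg v}=\gamma$, times the $(1-p)$ for $v\notin A$), which is what makes $\E[|K|]$ clean and the ratio $e(\tilde H)/|K|$ tractable. In the paper's Proposition~\ref{prop: alg iset} there is deliberately no guarantee at all on $\Delta(\tilde H)$; after a nibble step the algorithm may re-enter the cleaning phase to restore $\Delta_i\le(1+\eps/10)D_i$ before the next nibble, and the entire $R_i$-bookkeeping exists to pay for exactly those cleaning steps. If you try to prove the $\Delta_{i+1}$-bound as you propose, you will hit the same obstacle the paper is engineered to avoid; drop that item, let the cleaning step do the degree control, and the rest of your argument goes through.
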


We note that the assumption $n \ge n_0$ is required for our probabilistic estimates. To deduce Theorem~\ref{theorem: iset} from Theorem~\ref{theorem: iset with large n}, consider the case $n<n_0$ and set $r \coloneqq \left\lceil \frac{n_0}{n} \right\rceil$. Let $G'$ be the disjoint union of $r$ vertex-disjoint copies of $G$. Then $|V(G')| = rn \ge n_0$, so Theorem~\ref{theorem: iset with large n} applies to $G'$ and yields an independent set of the asserted size. Since $G'$ is a vertex-disjoint union, by the pigeonhole principle, some copy of $G$ in $G'$ contains an independent set of the desired size, completing the proof of Theorem~\ref{theorem: iset}.

To prove Theorem~\ref{theorem: iset with large n}, we use the following proposition. It produces an independent set in a sufficiently regular graph and guarantees that, after deleting this set together with its neighbors, the remaining graph has well-controlled average degree. This proposition relies crucially on Lemma~\ref{lemma:fewtriangles}, which bounds the number of triangles in $K_{t,t,t}$-free graphs with a given maximum degree $\Delta$.

\begin{proposition}\label{prop: alg iset}
    Fix $0 < \kappa < 1/3$ and $t\in\N$. There exists $d_0\in\N$ such that for every $d\ge d_0$ there exists $n_0\in\N$ such that for all $n\ge n_0$, the following holds.
    Let $H$ be any $K_{t,t,t}$-free graph satisfying
    \begin{enumerate}[label=\textup{(C\arabic*)},ref=\textup{C\arabic*}]
        \item $|V(H)| = n$,
        \item $d(H) = d$, and
        \item $\Delta(H) = \Delta \leq (1+\kappa)d$.
    \end{enumerate}
    Set $p\coloneqq \kappa/d$ and $\gamma\coloneqq (1-p)^{\Delta}$. Then there exist an independent set $I\subseteq V(H)$ and a subgraph $\tilde H\subseteq H-N[I]$ such that, for $\beta\coloneqq d^{-1/(20t^2)}$,
    \begin{enumerate}[label=\textup{(R\arabic*)},ref=\textup{R\arabic*}]
        \item\label{item: ISET lb} $|I|\ge (1-2\kappa)np$,
        \item\label{item: n' lb} $|V(\tilde H)|=(1\pm \beta)\,\gamma(1-p)\,n$, and
        \item\label{item: d' lb} $d(\tilde H)\le (1+4\beta)\,\gamma\,d(H)$.
    \end{enumerate}
\end{proposition}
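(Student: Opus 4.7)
The plan is to carry out one step of a R\"odl-type nibble combined with an \emph{equalizing coin flip} that washes out the dependence of survival probabilities on individual degrees. Sample $A \subseteq V(H)$ by including each vertex independently with probability $p$, and independently sample Bernoulli coins $(E_v)_{v \in V(H)}$ with $\Pr[E_v = 1] \coloneqq (1-p)^{\Delta - \deg_H(v)}$. Set
\[
V' \coloneqq \{v \in V(H) \setminus N_H[A] : E_v = 1\}, \qquad \tilde H \coloneqq H[V'].
\]
The $E_v$ are arranged so that $\Pr[v \in V'] = (1-p)^{\deg(v)+1}(1-p)^{\Delta - \deg(v)} = \gamma(1-p)$, uniformly in $v$. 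For the independent set, let $I \subseteq A$ be obtained by greedily deleting one endpoint from each edge of $H[A]$; then $I$ is independent in $H$, $|I| \ge |A| - e(H[A])$, and $I \subseteq A$ gives $N_H[I] \subseteq N_H[A]$, so $\tilde H \subseteq H - N_H[I]$. The hypothesis $\Delta \le (1+\kappa)d$ ensures $p\Delta = O(\kappa)$, so in particular $\gamma$ is bounded below by a positive constant.

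\textbf{Key expectation.} Linearity gives $\E[|A|] = np$, $\E[e(H[A])] = p^2 e(H) = \tfrac{1}{2} np\kappa$, and $\E[|V(\tilde H)|] = \gamma(1-p) n$. The critical quantity is $\E[e(\tilde H)]$: for each edge $uv \in E(H)$,
\[
\Pr[u, v \in V'] = (1-p)^{|N_H[u] \cup N_H[v]|}(1-p)^{\Delta - \deg(u)}(1-p)^{\Delta - \deg(v)} = \gamma^2 (1-p)^{-|N_H(u) \cap N_H(v)|}.
\]
Since $p\,|N(u)\cap N(v)| \le p\Delta = O(1)$, Taylor expansion gives $(1-p)^{-|N(u)\cap N(v)|} \le 1 + O(p)\,|N(u)\cap N(v)|$, hence
\[
\E[e(\tilde H)] \le \gamma^2\bigl(e(H) + O(p)\cdot T(H)\bigr),
\]
where $T(H)$ is the triangle count of $H$. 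Invoking Lemma~\ref{lemma:fewtriangles} yields $T(H) = O(n\Delta^{2-1/t^2}) = O(nd^{2-1/t^2})$, so the error term is at most $O(d^{-1/t^2})\,e(H) \ll \beta\cdot e(H)$ and we obtain $\E[e(\tilde H)] \le (1+O(\beta))\gamma^2 e(H)$.

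\textbf{Concentration and assembly.} The remaining task is to show that $|A|$, $e(H[A])$, $|V(\tilde H)|$, and $e(\tilde H)$ are each concentrated to within a factor $(1\pm\beta)$ (or $(1\pm\kappa/2)$ for the first two) of their means with probability $1-o(1)$. The first two follow from Chernoff and Chebyshev respectively, using $\Var[e(H[A])] = O(np\kappa)$ from a direct second-moment computation and the fact that $np = \kappa n/d \to \infty$ as $n \to \infty$. For $|V(\tilde H)|$ and $e(\tilde H)$, we apply Talagrand's inequality (Theorem~\ref{theorem: Talagrand}) with independent trials $(\bbone{v\in A}, E_v)_v$: $|V(\tilde H)|$ is $O(\Delta)$-Lipschitz and $O(\Delta)$-certifiable, while $e(\tilde H)$ is $O(\Delta^2)$-Lipschitz and $O(\Delta)$-certifiable. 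For $n$ large enough compared to $d$ (which is precisely the role of the $n \ge n_0$ hypothesis), the Talagrand error $O(\mu\sqrt{r\,\E[X]})$ is $o(\beta\cdot\E[X])$, yielding the desired concentration. A union bound produces a joint outcome from which $|I| \ge |A| - e(H[A]) \ge (1-2\kappa)np$ yields (\ref{item: ISET lb}), $|V(\tilde H)| = (1\pm\beta)\gamma(1-p)n$ yields (\ref{item: n' lb}), and
\[
d(\tilde H) = \frac{2e(\tilde H)}{|V(\tilde H)|} \le \frac{2(1+O(\beta))\gamma^2 e(H)}{(1-\beta)\gamma(1-p)n} \le (1+4\beta)\gamma\, d(H)
\]
yields (\ref{item: d' lb}).

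\textbf{Main obstacle.} The heart of the argument is the estimate on $\E[e(\tilde H)]$. Were $H$ triangle-free, every codegree $|N(u)\cap N(v)|$ with $uv\in E(H)$ would vanish and $\E[e(\tilde H)]$ would equal $\gamma^2 e(H)$ exactly; in general, triangles cause edges to ``cluster'' into $\tilde H$, inflating the mean by the factor $(1-p)^{-|N(u)\cap N(v)|}$. Controlling this inflation requires a \emph{global} upper bound on $T(H)$, and this is precisely what Lemma~\ref{lemma:fewtriangles} supplies. It is the only place where the $K_{t,t,t}$-free hypothesis enters the proof of this proposition; the remainder is routine concentration.
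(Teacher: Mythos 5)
Your proposal is correct and follows essentially the same strategy as the paper: the same equalizing coin flip $\Pr[E_v=1]=(1-p)^{\Delta-\deg(v)}$ so that survival probability $\gamma(1-p)$ is uniform over vertices, the same per-edge survival probability $\gamma^2(1-p)^{-|N(u)\cap N(v)|}$, the same reliance on Lemma~\ref{lemma:fewtriangles} to control the global codegree sum, and Talagrand for concentration of $|V(\tilde H)|$ and $e(\tilde H)$.

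Two small implementation differences are worth noting, both of which are sound. First, to bound $\E[e(\tilde H)]$, you linearize via the elementary bound $(1-p)^{-x}\le 1+O(px)$ (valid since $px\le p\Delta\le\kappa(1+\kappa)<1/2$), which converts the problem directly to bounding $p\sum_{uv}|N(u)\cap N(v)|=3p\,T(H)$; the paper instead splits edges into $\Good$ and $\Bad$ according to a codegree threshold $d^{1-1/(5t^2)}$ and bounds each class separately. Your route is slightly more direct, yields the same conclusion (in fact with the sharper error $O(d^{-1/t^2})=\beta^{20}\ll\beta$), and exploits the triangle bound in the same place. Second, you use Chebyshev for $e(H[A])$ where the paper uses Talagrand; Chebyshev gives only polynomial rather than exponential decay in the failure probability, but since the proposition is a deterministic existence statement and a union bound over $O(1)$ events with $o(1)$ failure probability suffices, this is adequate. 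One minor omission: when checking that the Talagrand additive correction is $o(\beta\,\E[X])$, you address the $\mu\sqrt{r\E[X]}$ term but not the $\mu^2 r$ term; both terms require $n\gg d^{4+\Theta(1/t^2)}$, so the same choice of $n_0$ handles both, but the check should be stated.
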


In the remainder of this section, we present our nibble procedure and prove that it produces an independent set of the desired size (assuming Proposition~\ref{prop: alg iset}). We begin with an informal overview before giving the formal description. Initialize $H_1 \coloneqq G$. At the start of each iteration~$i$, the current graph~$H_i$ has $N_i$ vertices and average degree~$D_i$. We then consider two possible cases: \smallskip

\begin{enumerate}[label=(Case \arabic*),wide]
    \item\label{item: case high deg vtx}
    \textbf{Cleaning step:} If $H_i$ contains a vertex of degree greater than $(1 + \eps / 10)D_i$, remove such a vertex~$v$ and set $H_{i+1} \coloneqq H_i - v$, updating $N_{i+1}$ and $D_{i+1}$ accordingly while tracking how this removal affects the eventual size of the independent set. \smallskip

    \item\label{item: case small delta}
    \textbf{Nibble step:} If $\Delta(H_i) \le (1 + \eps / 10)D_i$, apply Proposition~\ref{prop: alg iset} to obtain an independent set $I_i \subseteq V(H_i)$ and a subgraph $H_{i+1} \subseteq H_i - N_{H_i}[I_i]$.
\end{enumerate} \smallskip

The nibble procedure terminates during the $T$-th iteration if
\begin{enumerate}[label=(T\arabic*)]
    \item\label{stop: D} $D_{T}$ is too ``small'',
    \item\label{stop: R} we have been in \ref{item: case high deg vtx} ``many'' times, or
    \item\label{stop: ISET} we have been in \ref{item: case small delta} ``many'' times.
\end{enumerate}
The choice of the above stopping criteria will become evident shortly after the statement of Proposition~\ref{prop: two stage nibble}.
The nibble procedure is formally described in Algorithm \ref{algorithm: nibble}, where the notions of ``many'' and ``small'' introduced above are made precise.
(The reader may recall \eqref{eq: termination avg deg} in the proof overview \S\ref{subsection: proof overview} for an informal description of these notions.)

\vspace{0.2cm}
\begin{breakablealgorithm}
\caption{The nibble procedure}\label{algorithm: nibble}
\begin{flushleft}
\textbf{Input}: An $n$-vertex graph $G$ having average degree $d$ and a parameter $\eps \in (0, 1)$. \\
\textbf{Output}: An independent set $I \subseteq V(G)$ and a subgraph $H \subseteq G - N[I]$.
\end{flushleft}

\begin{enumerate}[itemsep = .2cm]
    \item Initialize $H_1 = G$, $I = \emptyset$, $R_1 = 1$, $N_1 = n$, $D_1 = d$, $\tau = 0$, and $i = 1$.

    \item\label{step: terminate} If 
    \ref{stop: D} $D_i < d^{\eps/8}$,
    \ref{stop: R} $R_i > d^{\eps/20}$,
    or
    \ref{stop: ISET} $\tau = \left \lceil \dfrac{10(1-\eps/3)\log d}{(1+\eps/5)\eps} \right \rceil$,  
    return $(I, H_i)$.
    
    \item If not, do the following:
    
    \begin{enumerate}[itemsep = .2cm]
        \item\label{step: remove high degree} If $H_i$ has a vertex $v$ of degree greater than $(1+\eps/10)D_i$, then let $H_{i+1} \coloneqq H_i-v$, $N_{i+1} = N_i - 1$, $D_{i+1} = d(H_{i+1})$, and $R_{i+1} = R_i\cdot \frac{N_i}{N_i-1}$.
        
        \item\label{step: run alg iset} Else (that is, if $H_i$ has maximum degree at most $(1+\eps/10)D_i$), then apply Proposition~\ref{prop: alg iset} to $H_i$ with 
        \[\kappa = \frac{\eps}{10}, \qquad d = D_i, \qquad n = N_i.\]
        Let $I_i, \tilde H_i$ be the resulting independent set and subgraph respectively. Update $I$ to $I \cup I_i$ and $\tau$ to $\tau + 1$, and let $H_{i+1} = \tilde H_i$, $N_{i+1} = |V(\tilde H_i)|$, $D_{i+1} = d(\tilde H_i)$, and $R_{i+1} = R_i$.
        
    \end{enumerate}

    \item Update $i$ to $i + 1$ and return to step~\ref{step: terminate}.
\end{enumerate}

\end{breakablealgorithm}
\vspace{0.3cm}

In the above procedure, during each iteration, $I$ denotes the independent set constructed so far, $R_i$ measures the improvement in the ratio $N_i / D_i$ resulting from the cleaning steps described in~\ref{item: case high deg vtx} (see the proof overview in~\S\ref{subsection: proof overview} for a discussion of why this ratio is tracked), and $\tau$ records the number of independent sets extracted from $G$—that is, the number of nibble steps described in~\ref{item: case small delta}. In Lemma~\ref{lemma: lb on n/d and n}, we will show that $N_i \ge n / d^{1 - \eps / 4}$ for every iteration~$i$ of Algorithm~\ref{algorithm: nibble}. This bound is crucial to guarantee that $N_i$ remains sufficiently large to apply Proposition~\ref{prop: alg iset} in Step~\ref{step: run alg iset} of Algorithm~\ref{algorithm: nibble} during each iteration, thereby ensuring that the algorithm is well-defined. Furthermore, the algorithm clearly terminates since the number of vertices strictly decreases after every iteration.

The main result concerning our nibble procedure (Algorithm~\ref{algorithm: nibble}) is stated below and directly implies Theorem~\ref{theorem: iset with large n}. It shows that either the independent set 
$I$ constructed iteratively by the nibble procedure is sufficiently large, or the graph remaining at the end of the procedure contains an independent set of the desired size.

\begin{proposition}\label{prop: two stage nibble}
    Fix $\varepsilon \in (0,1)$ and $t \in \mathbb{N}$. Suppose $d$ is sufficiently large depending on $\varepsilon$ and $t$, and $n$ is sufficiently large depending on $d$. Let $G$ be an $n$-vertex $K_{t,t,t}$-free graph of average degree $d$. Let $(I, H)$ be the output obtained after running Algorithm~\ref{algorithm: nibble} with input $(G, \eps)$.
    Either $|I| \geq (1-\eps)\dfrac{n\log d}{d}$ or $\alpha(H) \geq (1-\eps)\dfrac{n\log d}{d}$.
\end{proposition}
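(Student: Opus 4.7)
The plan is to establish Proposition~\ref{prop: two stage nibble} via a single inductive invariant tracking the quantities $N_i, D_i, R_i$ through the execution of Algorithm~\ref{algorithm: nibble}, followed by a case analysis on which of the three stopping conditions~\ref{stop: D},~\ref{stop: R},~\ref{stop: ISET} triggered termination. Write $T$ for the final iteration and $\tau$ for the number of nibble steps performed.

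The crucial preliminary step is Lemma~\ref{lemma: lb on n/d and n}, which asserts that there exist an absolute constant $c_1 > 0$, a quantity $\beta_1 = o_d(1)$, and $\beta_2 = \Theta(\eps)$ with $\beta_2 > \eps/8$, such that throughout the execution,
\[
\frac{N_i}{D_i}\;\geq\;(1-\beta_1)\,R_i^{\,c_1\eps}\,\frac{n}{d}\qquad\text{and}\qquad N_i\;\geq\;\frac{n}{d^{\,1-\beta_2}}.
\]
I would prove this by induction on $i$. For cleaning steps, starting from $D_{i+1}=(N_iD_i-2\deg(v))/(N_i-1)$ and $\deg(v)>(1+\eps/10)D_i$, a direct Taylor expansion yields $N_{i+1}/D_{i+1}\geq (N_i/(N_i-1))^{c_1\eps}\cdot N_i/D_i$ (formalized in Corollary~\ref{cor:one cleaning step ratio}), matching exactly the update rule $R_{i+1}=R_i\cdot N_i/(N_i-1)$. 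For nibble steps, Proposition~\ref{prop: alg iset} provides $N_{i+1}/D_{i+1}\geq \tfrac{(1-\beta)(1-p)}{1+4\beta}\cdot N_i/D_i$ with $\beta = D_i^{-1/(20t^2)}$; and since~\ref{stop: ISET} caps $\tau = O(\log d/\eps)$, the cumulative multiplicative loss in the ratio across all nibble steps is only $1-O(\beta\tau)=1-o_d(1)$, which is absorbed into $\beta_1$. For the vertex lower bound, each nibble step contracts $N_i$ by roughly $\gamma=(1-p)^\Delta\leq e^{-\eps/10+O(\eps^2)}$ (using the tight bound $\Delta\leq(1+\eps/10)D_i$ enforced before step~\ref{step: run alg iset}), so after $\tau_{\max}$ nibbles the cumulative contraction is at least $d^{-(1-\eps/3)/(1+\eps/5)+O(\eps^2)}$; cleaning steps contribute an additional factor of at least $1/R_T \geq d^{-\eps/20}$ by~\ref{stop: R}. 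Combining these gives $N_T \geq n/d^{1-29\eps/60+O(\eps^2)}$, so one may take $\beta_2 > \eps/8$ for all sufficiently small $\eps$.

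Given the invariants, the case analysis concludes as follows. If~\ref{stop: D} triggers ($D_T<d^{\eps/8}$), the greedy bound combined with $N_T\geq n/d^{1-\beta_2}$ gives $\alpha(H_T)\geq N_T/(D_T+1)\geq n/(2\,d^{1-\beta_2+\eps/8})$, which exceeds $(1-\eps) n\log d/d$ for all large $d$ precisely because $\beta_2 > \eps/8$. If~\ref{stop: R} triggers ($R_T > d^{\eps/20}$), the ratio invariant gives $N_T/D_T\geq(1-\beta_1)\,d^{c_1\eps^2/20}\,n/d$, and the greedy bound again yields the desired estimate. If~\ref{stop: ISET} triggers, Proposition~\ref{prop: alg iset} produces $|I_i|\geq(1-\eps/5)(\eps/10)\,N_i/D_i\geq(1-\eps/5)(\eps/10)(1-\beta_1)\,n/d$ at each nibble step (using $R_i \geq 1$), so summing over $\tau_{\max}=\lceil 10(1-\eps/3)\log d/((1+\eps/5)\eps)\rceil$ nibble steps gives
\[
|I|\;\geq\;\frac{(1-\eps/3)(1-\eps/5)(1-\beta_1)}{1+\eps/5}\cdot\frac{n\log d}{d}\;\geq\;(1-\eps)\,\frac{n\log d}{d},
\]
where the last step uses $(1-\eps/3)(1-\eps/5)/(1+\eps/5) \geq 1 - 11\eps/15 + O(\eps^2) > 1-\eps$ for sufficiently small $\eps$, combined with $\beta_1 = o_d(1)$.

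The main obstacle is Lemma~\ref{lemma: lb on n/d and n}, in particular controlling the multiplicative losses in the $N_i/D_i$ ratio during nibble steps so that their cumulative effect across $O(\log d/\eps)$ nibble iterations is only $o_d(1)$. This is made possible precisely by the tight degree constraint $\Delta(H_i)\leq(1+\eps/10)D_i$ enforced before each nibble step by the cleaning phase: it ensures $\gamma\approx e^{-\eps/10}$, allowing the algorithm to sustain $\tau_{\max}=O(\log d/\eps)$ nibble iterations without prematurely exhausting the vertex set. Without such tight control on $\Delta/D$, one would only recover a constant-factor weaker bound, analogous to~\eqref{lossoffactor4}.
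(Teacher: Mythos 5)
Your proposal is correct and takes essentially the same route as the paper: you establish the same invariants on $N_i/D_i$ (via Corollary~\ref{cor:one cleaning step ratio} for cleaning steps and Proposition~\ref{prop: alg iset} for nibble steps) and on $N_i$, then close with the identical three-way case analysis on the stopping conditions, matching the paper's Lemma~\ref{lemma: lb on n/d and n} and the computation in \S\ref{sec:propassuminglem}. Two cosmetic points: the inequality $\gamma \leq e^{-\eps/10 + O(\eps^2)}$ should read $\gamma \geq e^{-\eps/10 - O(\eps^2)}$ (you need a lower bound on the per-step contraction to lower-bound $N_T$), and $R_T$ can slightly exceed $d^{\eps/20}$ at termination — the paper absorbs this overshoot by bounding $R_T$ by the more generous $d^{\eps/12}$ — but neither affects the conclusion since the exponent margin is ample.
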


The rest of this section is devoted to the proof of Proposition~\ref{prop: two stage nibble}.
Consider running Algorithm~\ref{algorithm: nibble} with input $(G, \eps)$.
We will assume, without loss of generality, that $\eps$ is sufficiently small and $n,\,d$ are sufficiently large for all arguments in this section.
Suppose the algorithm runs for $T$ iterations, i.e., we reach step~\ref{step: terminate} during the $T$-th iteration.
Additionally, let $S \subseteq [T - 1]$ be the set of iterations where we perform step~\ref{step: run alg iset}.
Note that, by the stopping criterion in step~\ref{step: terminate} of Algorithm~\ref{algorithm: nibble}, we obtain the following inequality.
\begin{align}\label{eq: tau bound}
    \tau = |S| \leq \left\lceil\dfrac{10(1-\eps/3)\log d}{(1+\eps/5)\eps}\right\rceil.
\end{align}

The following lemma plays a key role in our proof. It establishes that throughout the nibble procedure, if $R_i$ grows, the ratio $\frac{N_i}{D_i}$ improves substantially over its initial value $\frac{n}{d}$, and that $N_i$ remains sufficiently large (recall \eqref{eq: lemma on n/d and n} from the proof overview \S\ref{subsection: proof overview}).

\begin{lemma}\label{lemma: lb on n/d and n}
For all $i \in [T]$, we have
\begin{enumerate}[label=\textup{(S\arabic*)},ref=\textup{S\arabic*}]
    \item\label{eq: bound on n/d} $\displaystyle 
    \frac{N_i}{D_i} \ge \left(1-\frac{\eps}{5}\right)R_i^{\eps/20}\frac{n}{d}.$

    \item\label{eq: bound on n} $\displaystyle 
    N_i \ge \frac{n}{d^{1-\eps/4}}.$
\end{enumerate}
\end{lemma}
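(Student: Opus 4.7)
The plan is to establish \ref{eq: bound on n/d} and \ref{eq: bound on n} by induction on $i \in [T]$, splitting each inductive step according to which branch of Algorithm~\ref{algorithm: nibble} is taken; the base case $i=1$ is immediate from $R_1=1$, $N_1=n$, $D_1=d$. For \ref{eq: bound on n} I would in fact bypass the induction and argue directly (see below), which avoids the awkwardness of the strict decrement $N_{i+1}=N_i-1$ in the cleaning step.

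For a cleaning step ($i \notin S$), the removed vertex $v$ satisfies $\deg_{H_i}(v) > (1+\eps/10)D_i$, which gives
\[
D_{i+1} \;=\; \frac{N_i D_i - 2\deg_{H_i}(v)}{N_i - 1} \;<\; \frac{D_i(N_i - 2 - \eps/5)}{N_i - 1}.
\]
Combining this with $N_{i+1} = N_i - 1$ and $R_{i+1} = R_i \cdot N_i/(N_i-1)$, the inductive step for \ref{eq: bound on n/d} reduces to the elementary inequality
\[
\frac{(N_i - 1)^2}{N_i(N_i - 2 - \eps/5)} \;\geq\; \left(\frac{N_i}{N_i-1}\right)^{\eps/20},
\]
which follows from a Taylor expansion (the left-hand side equals $1 + \eps/(5 N_i) + O(N_i^{-2})$ while the right-hand side equals $1 + \eps/(20 N_i) + O(N_i^{-2})$), valid since \ref{eq: bound on n} at step $i$ and the assumption that $n$ is large compared to $d$ force $N_i$ to be huge.

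For a nibble step ($i \in S$), Proposition~\ref{prop: alg iset} applied with $\kappa = \eps/10$ furnishes $N_{i+1} \geq (1-\beta_i)\gamma_i(1-p_i)N_i$ and $D_{i+1} \leq (1+4\beta_i)\gamma_i D_i$, so $N_{i+1}/D_{i+1} \geq \alpha_i \cdot N_i/D_i$ with $\alpha_i \defeq (1-\beta_i)(1-p_i)/(1+4\beta_i)$, while $R_{i+1} = R_i$. A single nibble step by itself does not preserve \ref{eq: bound on n/d}, so the plan is to carry the stronger invariant $N_i/D_i \geq P_i \cdot R_i^{\eps/20} \cdot n/d$ with $P_i \defeq \prod_{j \in S,\,j < i}\alpha_j$ and then separately show $P_i \geq 1 - \eps/5$ for every $i \in [T]$. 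Since the termination condition has not yet triggered at step $j < T$, one has $D_j \geq d^{\eps/8}$, so both $\beta_j = D_j^{-1/(20 t^2)}$ and $p_j = \eps/(10 D_j)$ are bounded by $d^{-c\eps}$ for some $c = c(t) > 0$, giving $\alpha_j \geq 1 - 6 d^{-c\eps}$. Combined with the bound $|S| = O(\log d /\eps)$ from \eqref{eq: tau bound}, this yields $P_i \geq 1 - O(d^{-c\eps}\log d/\eps) \geq 1 - \eps/5$ for $d$ sufficiently large.

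For \ref{eq: bound on n}, I would telescope through all $j < i$:
\[
N_i \;\geq\; \frac{n}{R_i} \cdot \prod_{j \in S,\, j<i}(1-\beta_j)\gamma_j(1-p_j),
\]
where the first factor absorbs the cleaning contribution and the second the nibble contribution. A second-order expansion of $\ln(1-p_j)$ together with $\Delta(H_j) \leq (1+\eps/10) D_j$ gives $\ln\gamma_j \geq -(1+\eps/10)\eps/10 - o(1)$. Using the stopping rule (together with the observation that $R$ can grow by at most a factor of $2$ in any single iteration, as $N_i\geq 2$) to bound $R_i \leq d^{\eps/10}$, and combining with $|S| \leq 10(1-\eps/3)\log d / ((1+\eps/5)\eps) + 1$, a routine exponent computation gives $N_i \geq n \cdot d^{-(1-\eps/3) - o_d(1)} \geq n/d^{1-\eps/4}$. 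The main obstacle is the algebraic inequality used in the cleaning step: the exponent $\eps/20$ in \ref{eq: bound on n/d} is chosen precisely so that it is dominated by the gain $\eps/5$ (twice the degree excess $\eps/10$) generated in each cleaning step; a secondary technicality is threading the cumulative loss from the nibble steps through the invariant, which is handled cleanly by the auxiliary product $P_i$.
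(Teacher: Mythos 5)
Your approach mirrors the paper's almost exactly: (S1) by induction, combining a cleaning-step ratio inequality (which the paper proves via an elementary bound valid for all $N\geq 3$ in Lemma~\ref{lem:one cleaning step} and Corollary~\ref{cor:one cleaning step ratio}, while you use a Taylor expansion requiring $N_i$ huge, which is legitimate provided (S2) is proved first and independently, as you propose) with the nibble-step estimates from Proposition~\ref{prop: alg iset}; and (S2) by telescoping $N_i/n$ and then controlling $R_i$ at termination. Most of the details check out and the invariant $P_i$ is correctly maintained.

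However, there is a genuine numerical gap at the end of your proof of (S2). After telescoping and using $D_j\ge d^{\eps/8}$ to absorb the error terms, what you actually obtain is
\begin{align*}
N_i \;\ge\; \frac{n}{R_i}\,\exp\!\left(-\frac{(1+\eps/10)\eps|S|}{10}-o_d(1)\right)\;\ge\;\frac{n}{R_i\,d^{\,1-\eps/3+o_d(1)}},
\end{align*}
so the $R_i^{-1}$ factor is \emph{in addition to} the $d^{-(1-\eps/3)-o_d(1)}$ coming from the product over $S$. Your intermediate claim ``$N_i\ge n\cdot d^{-(1-\eps/3)-o_d(1)}$'' therefore silently drops that factor, and it cannot be absorbed into $o_d(1)$: if $R_i$ is as large as $d^{\eps/10}$, it contributes a fixed $\eps/10$ to the exponent. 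Plugging in your relaxed bound $R_i\le d^{\eps/10}$ gives only $N_i\ge n/d^{\,1-\eps/3+\eps/10+o_d(1)}=n/d^{\,1-7\eps/30+o_d(1)}$, and since $7\eps/30<\eps/4$, this is strictly weaker than the required $N_i\ge n/d^{1-\eps/4}$. The fix is simply not to weaken the bound you already derived: you observed $R_T\le 2R_{T-1}\le 2d^{\eps/20}$, and $2d^{\eps/20}\le d^{\eps/12}$ for $d$ large; then $\eps/3-\eps/12=\eps/4$ and the inequality chain closes (with the remaining constant-factor slack absorbed by the same large-$d$ considerations you invoke elsewhere). This is exactly why the paper chooses the threshold $d^{\eps/12}$ for $R_i$ rather than the looser $d^{\eps/10}$.
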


Before we prove the above lemma, we deduce Proposition~\ref{prop: two stage nibble} from it.

\subsection{Proof of Proposition~\ref{prop: two stage nibble} assuming Lemma~\ref{lemma: lb on n/d and n}}
\label{sec:propassuminglem}
We consider each termination condition in step~\ref{step: terminate} of Algorithm~\ref{algorithm: nibble} separately:
\begin{itemize}[itemsep=5pt]
    \item First, suppose that $D_T < d^{\eps/8}$ (corresponding to condition~\ref{stop: D}). We have
    \[\alpha(H) \geq \frac{N_T}{1 + D_T} \geq \frac{N_T}{1 + d^{\eps/8}} \overset{\eqref{eq: bound on n}}{\geq} \frac{n}{d^{1 - \eps/4}(1 + d^{\eps/8})} \gg \frac{n\log d}{d},\]
    as desired.
    
    \item Next, suppose that $R_T > d^{\eps/20}$ (corresponding to condition~\ref{stop: R}).
    Since we may assume $D_T \geq d^{\eps/8} \gg 1$, we have
    \[\alpha(H) \geq \frac{N_T}{1 + D_T} \geq \frac{N_T}{2D_T} \overset{\eqref{eq: bound on n/d}}{\geq} \left(1-\frac{\eps}{5}\right)R_T^{\eps/20}\frac{n}{2d} \gg \frac{n\log d}{d},\]
    by the lower bound on $R_T$, as desired.

    \item Finally, suppose that $\tau = \left\lceil\frac{10(1-\eps/3)\log d}{(1+\eps/5)\eps}\right\rceil$ (corresponding to condition~\ref{stop: ISET}).
   In this case, applying Proposition~\ref{prop: alg iset}, item~\eqref{item: ISET lb}, with parameters $\kappa = \frac{\eps}{10}$, $d = D_i$, and $n = N_i$, together with the fact that $R_i \ge 1$ for all $i$, we obtain the following:
    \begin{align*}
        |I| = \sum_{i \in S}|I_i| \overset{\eqref{item: ISET lb}}{\geq} \sum_{i \in S}\left(1 - \frac{\eps}{5}\right)\frac{\eps\,N_i}{10\,D_i}
        &\overset{\eqref{eq: bound on n/d}}{\geq} \sum_{i \in S}\left(1 - \frac{\eps}{5}\right)^2R_i^{\eps/20}\frac{\eps\,n}{10d} \\
        &~\geq \left(1 - \frac{\eps}{5}\right)^2\frac{\eps\tau\,n}{10d} \\
        &~\geq \left(1 - \frac{2\eps}{5}\right)\frac{\eps\tau\, n}{10d} \\
        &~\geq \left(1 - \eps\right)\frac{n\log d}{d},
    \end{align*}
    as desired.
\end{itemize}

This shows that, for every termination condition of Algorithm~\ref{algorithm: nibble}, either 
$|I| \ge (1 - \eps)\dfrac{n\log d}{d}$ or $\alpha(H) \ge (1 - \eps)\dfrac{n\log d}{d}$. 
This completes the proof of Proposition~\ref{prop: two stage nibble}, assuming Lemma~\ref{lemma: lb on n/d and n}. 

In the next subsection, we prove Lemma~\ref{lemma: lb on n/d and n}.

\subsection{Tracking $N_i/D_i$ and $N_i$ during the nibble and proving Lemma~\ref{lemma: lb on n/d and n}}

To prove Lemma~\ref{lemma: lb on n/d and n}, we first establish two simple auxiliary results concerning the cleaning step that removes high-degree vertices in \ref{item: case high deg vtx}. These results will be useful when analyzing $R_i$.

\begin{lemma} \label{lem:one cleaning step}
    Let $0<\eps<1$ and let $H$ be a graph with $N\geq 3$ vertices and average degree $D$.
    Assume that $H$ has a vertex $v$ of degree greater than $(1+\eps/10)D$. Let $H'\coloneqq H-v$. Write $N'\coloneqq N-1$ for the number of vertices of $H'$, and let $D'$ be the average degree of $H'$. Then
    \[\frac{N'}{D'}\geq \left(1+\frac{\eps}{5N}\right){\frac{N}{D}}.\]
\end{lemma}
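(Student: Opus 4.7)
The plan is a direct computation exploiting the high degree of $v$. Let me write $d_v \coloneqq \deg_H(v)$, so by hypothesis $d_v > (1+\eps/10)D$.

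First, express everything in terms of edge counts. Since $|E(H)| = ND/2$ and removing $v$ deletes exactly $d_v$ edges, we have $|E(H')| = ND/2 - d_v$, so
\[
D' \;=\; \frac{2|E(H')|}{N-1} \;=\; \frac{ND - 2d_v}{N-1}.
\]
Consequently,
\[
\frac{N'}{D'} \;=\; \frac{(N-1)^2}{ND - 2d_v}.
\]
(Note that $ND - 2d_v \ge 0$ since $H'$ has a nonnegative number of edges, and in fact strictly positive unless $H'$ is empty; the interesting case for us is the nontrivial one.)

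Next, I would reduce the inequality to a linear one. After multiplying through, the claim $\frac{N'}{D'} \ge \left(1+\frac{\eps}{5N}\right)\frac{N}{D}$ is equivalent to
\[
D(N-1)^2 \;\ge\; \left(1+\frac{\eps}{5N}\right) N\bigl(ND - 2d_v\bigr).
\]
Expanding both sides and cancelling the common term $N^2D$ yields, after rearrangement,
\[
2N(d_v - D) + D + \frac{2\eps d_v}{5} \;\ge\; \frac{\eps N D}{5}.
\]

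Finally, I would plug in the degree hypothesis. Since $d_v - D > \eps D/10$, the first summand alone satisfies $2N(d_v - D) > \eps ND/5$, which already matches the right-hand side; the remaining nonnegative terms $D + 2\eps d_v/5$ only help. This establishes the desired inequality.

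The proof involves essentially no obstacle: it is a one-step algebraic manipulation, and the only ``idea'' is to convert the target multiplicative bound into a linear inequality in $d_v$ and then invoke the degree assumption $d_v > (1+\eps/10)D$. The factor $1/5$ appearing in the statement is exactly what remains after the factor of $2$ from $2N(d_v-D)$ meets the factor of $1/10$ from the degree assumption, so the constant is the natural output of the calculation rather than something that needs to be chosen delicately.
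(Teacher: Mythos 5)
Your proof is correct, and it follows essentially the same approach as the paper: compute $D'$ from the edge count, reduce the multiplicative claim to a polynomial inequality, and invoke the degree hypothesis. The only cosmetic difference is the order of operations — the paper substitutes $d_v > (1+\eps/10)D$ early, obtaining $N'/D' > (N-1)^2/\bigl((N-2-\eps/5)D\bigr)$ and then verifying $(N-1)^2 > (N-2-\eps/5)(1+\eps/(5N))N$, whereas you carry $d_v$ symbolically through the cross-multiplication and only invoke the bound at the end. Both versions implicitly assume $D' > 0$ when dividing; this is harmless in the context where the lemma is used.
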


\begin{proof}
    Note that 
    \[e(H')=e(H)-d_{H}(v)< \frac{ND}{2}-\left(1+\frac{\eps}{10}\right)D.\] 
    Hence, 
    \[D'= \frac{2e(H')}{N'}<\frac{(N-2-\eps/5)D}{N'},\] 
    implying
    \[\frac{N'}{D'} > \frac{(N')^2}{(N-2-\eps/5)D}=\frac{(N-1)^2}{(N-2-\eps/5)D}.\] 
    The lemma follows by observing that 
    \[(N-1)^2>(N-2-\eps/5)\left(1+\frac{\eps}{5N}\right)N. \qedhere\]
\end{proof}

\begin{corollary} \label{cor:one cleaning step ratio}
    Let $0<\eps<1$ and let $H$ be a graph with $N\geq 3$ vertices and average degree $D$. Assume that $H$ has a vertex $v$ of degree greater than $(1+\eps/10)D$. Let $H'\coloneqq H-v$. Write $N'\coloneqq N-1$ for the number of vertices of $H'$, and let $D'$ be the average degree of $H'$. Then
    \[\frac{N'/D'}{N/D}\geq \left(\frac{N}{N'}\right)^{\eps/20}.\]
\end{corollary}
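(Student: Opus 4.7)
The plan is to derive this corollary directly from Lemma \ref{lem:one cleaning step}, which already provides the additive-type multiplicative gain $\frac{N'}{D'} \ge \left(1+\frac{\eps}{5N}\right)\frac{N}{D}$. The task thus reduces to showing the pointwise inequality
\[
1 + \frac{\eps}{5N} \;\ge\; \left(\frac{N}{N'}\right)^{\eps/20},
\]
i.e.\ converting a linear gain into an exponentiated one with a suitable loss in the constant in the exponent.

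To establish this inequality, I would rewrite the right-hand side as $\left(1+\frac{1}{N-1}\right)^{\eps/20}$ and invoke the sub-additive form of Bernoulli's inequality, $(1+x)^a \le 1+ax$ valid for $a \in (0,1)$ and $x \ge -1$, which applies here since $\eps/20 \in (0,1)$. This yields the upper bound
\[
\left(1+\frac{1}{N-1}\right)^{\eps/20} \;\le\; 1 + \frac{\eps}{20(N-1)}.
\]
It then suffices to verify the elementary inequality $\frac{\eps}{5N} \ge \frac{\eps}{20(N-1)}$, equivalently $4(N-1) \ge N$, i.e.\ $N \ge 4/3$. This holds for every $N \ge 3$ (indeed for every $N \ge 2$), which matches the hypothesis of the corollary.

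I do not anticipate any substantive obstacle. The statement is essentially a repackaging of Lemma \ref{lem:one cleaning step} into a multiplicative form that is better suited for telescoping across the cleaning steps of Algorithm \ref{algorithm: nibble}; indeed, telescoping $\left(\frac{N_j}{N_j-1}\right)^{\eps/20}$ over cleaning iterations is exactly how the factor $R_i^{\eps/20}$ arises in Lemma \ref{lemma: lb on n/d and n}. The only points to handle with care are the correct direction of Bernoulli's inequality (requiring $\eps/20 \in (0,1)$) and confirming that the resulting elementary estimate holds throughout the stated range $N \ge 3$; both are immediate.
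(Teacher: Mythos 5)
Your proof is correct and follows essentially the same route as the paper: both apply Lemma~\ref{lem:one cleaning step} to get $\frac{N'/D'}{N/D}\ge 1+\frac{\eps}{5N}$ and then verify $1+\frac{\eps}{5N}\ge\left(\frac{N}{N-1}\right)^{\eps/20}$ by elementary estimates. The paper routes through the bound $1+\frac{\eps}{5N}\ge e^{\eps/(10N)}$ and then checks $e^{2/N}\ge 1+\frac{1}{N-1}$, whereas you apply the reverse Bernoulli inequality $(1+x)^a\le 1+ax$ for $a\in(0,1)$ directly; your version is marginally more streamlined but the content is identical.
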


\begin{proof}
    By Lemma \ref{lem:one cleaning step}, we have 
    \[\frac{N'/D'}{N/D}\geq 1+\frac{\eps}{5N}\geq e^{\frac{\eps}{10N}}.\] 
    Hence, it suffices to verify that $e^{\frac{2}{N}}\geq \frac{N}{N'}$, which follows from $e^{\frac{2}{N}}\geq 1+\frac{2}{N}$, $N'=N-1$, and $N\geq 3$.
\end{proof}

We are now ready to prove Lemma~\ref{lemma: lb on n/d and n}.

\begin{proof}[Proof of Lemma~\ref{lemma: lb on n/d and n}]
    We begin with the proof of the lower bound on $N_i/D_i$ in \eqref{eq: bound on n/d}.
    The claim is trivial for $i = 1$. Now, fix an arbitrary $i \in [T-1]$.  
    If $i \in S$, apply Proposition~\ref{prop: alg iset}~items~\eqref{item: n' lb}~and~\eqref{item: d' lb} with $D_i, N_i, \varepsilon/10$ playing the roles of $d, n, \kappa$, respectively;  
    and if $i \notin S$, apply Corollary~\ref{cor:one cleaning step ratio}.  
    This yields the following for $\beta_i \coloneqq D_i^{-1/(20t^2)}$:
    \begin{align*}
        \frac{N_{i+1}}{D_{i+1}} \geq \left\{\begin{array}{cc}
           \dfrac{(1-\beta_i)(1 - \eps/(10D_i))}{(1+4\beta_i)}\dfrac{N_{i}}{D_{i}} & \text{if } i \in S; \vspace{0.25cm} \\
            \left(\dfrac{N_i}{N_i - 1}\right)^{\eps/20}\dfrac{N_{i}}{D_{i}}  & \text{if } i \notin S.
        \end{array}\right.
    \end{align*}
    Since $i < T$, it must be that $D_i \geq d^{\eps/8}$; otherwise, Algorithm~\ref{algorithm: nibble} would have terminated at iteration $i < T$ (specifically at step~\ref{step: terminate}), contradicting the assumption that the algorithm continues past this point. Therefore, recursively applying the above, we obtain
    \begin{align*}
        \frac{N_i}{D_i} &\geq R_i^{\eps/20}\frac{n}{d}\prod_{j \in S,\, j < i}(1 - 10\beta_j) \\
        &\geq R_i^{\eps/20}\frac{n}{d}\exp\left(-20\sum_{j \in S,\, j < i}\beta_j\right) \\
        &\geq R_i^{\eps/20}\frac{n}{d}\exp\left(-\frac{|S|}{d^{\eps/(200t^2)}}\right),
    \end{align*}
    where we use the fact that $D_j \geq d^{\eps/8}$ for each $j<T$.
    The bound in \eqref{eq: bound on n/d} now follows by the upper bound on $|S|$ in \eqref{eq: tau bound} and the fact that $d$ is sufficiently large in terms of $\eps$ and $t$.

    Next, we turn our attention to proving the lower bound on $N_i$ in \eqref{eq: bound on n}.
    We will begin by showing that
    \begin{align}\label{eq: lb on N_T}
        N_i \geq \frac{n}{R_i}\exp\left(-\frac{(1+\eps/5)\eps\tau}{10}\right) \geq \frac{n}{R_{i}d^{1-\eps/3}}.
    \end{align}
    The last inequality follows by the bound on $\tau$ in \eqref{eq: tau bound}.
    
    Following a similar strategy as in the proof of \eqref{eq: bound on n/d}, we have the following as a result of Proposition~\ref{prop: alg iset}~item~\eqref{item: n' lb}, the fact that $\Delta(H_i) \leq (1+\eps/10)D_i$ if we are in step~\ref{step: run alg iset} (i.e., $i \in S$) in Algorithm \ref{algorithm: nibble}, and the definition of step~\ref{step: remove high degree} in Algorithm \ref{algorithm: nibble}:
    \[\forall 1 \leq i \leq T-1, \qquad \frac{N_{i+1}}{N_i} \geq \left\{\begin{array}{cc}
        (1 - \beta_i)\left(1 - \dfrac{\eps}{10D_i}\right)^{(1+\eps/10)D_i + 1} & \text{if } i \in S; \vspace{0.5cm} \\
        \dfrac{N_i - 1}{N_i}  & \text{if } i \notin S.
    \end{array}\right.\]

    Recursively applying the above inequality and using the definition of $R_i$, we have
    \begin{align*}
        N_{i} &\geq \frac{n}{R_{i}}\prod_{j \in S, j < i}\left(\left(1 - \frac{\eps}{10D_j}\right)^{(1+\eps/10)D_j + 1}(1 - \beta_j)\right) \\
        &\geq \frac{n}{R_{i}}\exp\left(- \frac{(1+\eps/8)\eps|S|}{10} - 2\sum_{j\in S}\beta_j\right) \\
        &\geq \frac{n}{R_{i}}\exp\left(- \frac{(1+\eps/5)\eps|S|}{10}\right),
    \end{align*}
    where the last two inequalities follow since $D_j \geq d^{\eps/8}$ for $j \in S$ and $d$ is sufficiently large in terms of $\eps$ and $t$.
    This completes the proof of \eqref{eq: lb on N_T}.

    For any given $i \le T$, we claim that if $R_{i} \leq d^{\eps/12}$, then \eqref{eq: bound on n} holds, as desired. Indeed, by \eqref{eq: lb on N_T}, we have
    \[N_i \geq \frac{n}{R_id^{1- \eps/3}} \geq \frac{n}{d^{1-\eps / 4}}.\]
   By the stopping criteria in step~\ref{step: terminate} of Algorithm~\ref{algorithm: nibble}, $R_i \leq d^{\eps/20} \ll d^{\eps/12}$ for all $i < T$.
    Therefore, $N_{T-1} \ge \frac{n}{d^{1-\eps / 4}}$.
    Hence, we only need to consider the case that $i = T$.
    To this end, we consider two cases based on the $(T - 1)$-th iteration of Algorithm~\ref{algorithm: nibble}:
    \begin{itemize}
        \item If we reach step~\ref{step: remove high degree}, then we have
        \begin{align*}
            R_{T} = R_{T - 1}\,\frac{N_{T - 1}}{N_{T - 1} - 1} 
            \leq \frac{d^{\eps / 20}}{1 - 1/N_{T - 1}} 
            \leq \frac{d^{\eps / 20}}{1 - d^{1-\eps/4}/n} \ll d^{\eps/12},
        \end{align*}
        as desired.

        \item If we reach step~\ref{step: run alg iset}, then $R_{T} = R_{T - 1} \leq d^{\eps / 20} \ll d^{\eps/12}$.
    \end{itemize}
    This covers all cases and completes the proof of the lemma.
\end{proof}

\section{Proof of Proposition~\ref{prop: alg iset}}\label{section: proof of prop}

In this section, we will prove Proposition~\ref{prop: alg iset}.
For the reader's convenience, we restate the proposition here.

\begin{proposition*}[{Restatement of Proposition~\ref{prop: alg iset}}]
    Fix $0 < \kappa < 1/3$ and $t\in\N$. There exists $d_0\in\N$ such that for every $d\ge d_0$ there exists $n_0\in\N$ such that for all $n\ge n_0$, the following holds.
    Let $H$ be any $K_{t,t,t}$-free graph satisfying
    \begin{enumerate}[label=\textup{(C\arabic*)}]
        \item \label{C1} $|V(H)| = n$,
        \item \label{C2} $d(H) = d$, and
        \item \label{C3} $\Delta(H) = \Delta \leq (1+\kappa)d$.
    \end{enumerate}
    Set $p\coloneqq \kappa/d$ and $\gamma\coloneqq (1-p)^{\Delta}$. Then there exist an independent set $I\subseteq V(H)$ and a subgraph $\tilde H\subseteq H-N[I]$ such that, for $\beta\coloneqq d^{-1/(20t^2)}$,
    \begin{enumerate}[label=\textup{(R\arabic*)}]
        \item \label{cond: iset lb} $|I|\ge (1-2\kappa)np$,
        \item \label{cond: tilde h vtx} $|V(\tilde H)|=(1\pm \beta)\,\gamma(1-p)\,n$, and
        \item \label{cond: tilde H avg deg} $d(\tilde H)\le (1+4\beta)\,\gamma\,d(H)$.
    \end{enumerate}
\end{proposition*}
Recall that here $N[I]$ denotes the closed neighborhood of $I$ in $H$, and the shorthand $x=(1\pm\beta)y$ means $(1-\beta)y\le x\le(1+\beta)y$. 

Let $H$ be a $K_{t,t,t}$-free graph satisfying conditions~\ref{C1}–\ref{C3}, and let $p$ and $\gamma$ be parameters as specified in Proposition~\ref{prop: alg iset}. To prove Proposition~\ref{prop: alg iset}, we apply Algorithm~\ref{algorithm: isetstep} (stated below) to $H$. The output of the algorithm then provides the desired independent set $I$ and subgraph $\tilde{H}$.

\vspace{0.2cm}
\begin{breakablealgorithm}
\caption{Independent Set Step}\label{algorithm: isetstep}
\begin{flushleft}
\textbf{Input}: A graph $H$ and parameters $p, \gamma \in [0,1]$. \\
\textbf{Output}: An independent set $I \subseteq V(H)$ and a subgraph $\tilde{H} \subseteq H - N[I]$.
\end{flushleft}
\begin{enumerate}[itemsep = .2cm, label = (\arabic*)]
    \item Sample $A \subseteq V(H)$ as follows: for each $v \in V(H)$, include $v \in A$ independently with probability $p$.
    We call $A$ the set of \emphd{activated} vertices.

    \item\label{step: eq} Let $\{\eq(v) \,:\, v \in V(H)\}$ be a family of independent random variables with distribution
    \[\eq(v) \sim \mathsf{Ber}\left(\frac{\gamma}{\left(1 - p\right)^{\deg(v)}}\right).\]
    Call $\eq(v)$ the \emphd{equalizing coin flip} for $v$.
    
    \item Sample $K \subseteq V(H)$ as follows: for each $v \in V(H)$, include $v \in K$ if $\eq(v) = 1$ and $N[v] \cap A = \0$.

    \item Let $\tilde{H} \coloneqq H[K]$ and let $I$ be an independent set of maximum size in $H[A]$.

\end{enumerate}
\end{breakablealgorithm}
\vspace{0.3cm}

Note that this algorithm is well-defined since $\gamma = (1-p)^{\Delta(H)}$.

We will prove Proposition~\ref{prop: alg iset} using the following two lemmas. The first lemma shows that the independent set $I$ produced by Algorithm~\ref{algorithm: isetstep} is sufficiently large with high probability, thereby establishing that condition~\ref{cond: iset lb} holds with high probability.

\begin{lemma}\label{lemma: large I}
    $\Pr[|I| \leq (1-2\kappa)np] = \exp\left(-\Omega\left(\dfrac{\kappa^4n}{d^5}\right)\right)$.
\end{lemma}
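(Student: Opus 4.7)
The plan is to apply Talagrand's inequality (Theorem~\ref{theorem: Talagrand}) directly to the random variable $|I| = \alpha(H[A])$, viewed as a function of the $n$ independent Bernoulli trials $\bbone{v \in A}$ for $v \in V(H)$.

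First I would lower-bound $\E|I|$ using the elementary inequality $\alpha(G) \ge |V(G)| - |E(G)|$ (apply this to $G = H[A]$ and greedily delete one endpoint per edge). Linearity of expectation then gives
\[
\E|I| \;\ge\; \E|A| - \E[e(H[A])] \;=\; np - e(H)p^2 \;=\; np - \frac{nd}{2}\cdot\frac{\kappa^2}{d^2} \;=\; \left(1 - \frac{\kappa}{2}\right) np,
\]
so establishing the target $|I| \ge (1-2\kappa)np$ reduces to showing that $|I|$ does not fall more than $(3\kappa/2)\, np$ below its expectation.

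To verify Talagrand's hypotheses: toggling one trial $\bbone{v \in A}$ changes the size of a maximum independent set in $H[A]$ by at most $1$, so $\mu = 1$; and if $|I| \ge s$ with witnessing independent set $\{v_1,\dots,v_s\} \subseteq A$, then fixing the $s$ trials at $v_1,\dots,v_s$ to $1$ certifies $|I| \ge s$ (since $H$ is deterministic), giving $r = 1$. Applying Theorem~\ref{theorem: Talagrand} with $\mu = r = 1$ and $t = \kappa np$, and noting that the correction $20\sqrt{\E|I|} + 64 \le 20\sqrt{np} + 64$ is at most $\kappa np / 2$ once $n \ge n_0(\kappa, d)$ is large enough, yields
\[
\Pr\!\left[\,|I| \le \E|I| - \tfrac{3\kappa}{2}\, np\,\right] \;\le\; 4\exp\!\left(-\frac{(\kappa np)^2}{8(\E|I| + \kappa np)}\right) \;\le\; \exp\!\left(-\Omega(\kappa^2 np)\right) \;=\; \exp\!\left(-\Omega(\kappa^3 n / d)\right),
\]
which is comfortably stronger than the claimed bound $\exp(-\Omega(\kappa^4 n / d^5))$. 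The only mildly delicate step is verifying the certification hypothesis of Talagrand, but it is immediate here because $H$ is fixed and the presence of an independent set of size $s$ in $H[A]$ is witnessed entirely by the trials at its own $s$ vertices.
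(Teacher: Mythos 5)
Your proof is correct, and it takes a genuinely different route from the paper's. The paper proves Lemma~\ref{lemma: large I} by splitting into two separate events---it applies the Chernoff bound to show $|A| \ge (1-pd)np$ whp, and applies Talagrand's inequality to show $e(H[A]) \le (1+p)p^2 e(H)$ whp, then combines via $|I| \ge |A| - e(H[A])$. The Talagrand step on $e(H[A])$ uses $\mu = \Theta(d)$ (toggling one vertex can change the edge count by up to $\Delta$), which is what produces the $d^5$ in the denominator of the exponent. You instead apply Talagrand \emph{once}, directly to $\alpha(H[A])$, after establishing the lower bound $\E|I| \ge \E|A| - \E[e(H[A])] = (1 - \kappa/2)np$ via linearity and the elementary inequality $\alpha(G) \ge |V(G)| - |E(G)|$. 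The key gain is that $\alpha(H[A])$ is $1$-Lipschitz in the indicator trials and admits an $r=1$ certificate (the $s$ trials at the vertices of an independent set of size $s$), so Talagrand with $\mu = r = 1$ and $t = \kappa np$ gives $\exp(-\Omega(\kappa^2 np)) = \exp(-\Omega(\kappa^3 n/d))$, which is stronger than the stated $\exp(-\Omega(\kappa^4 n/d^5))$. Your approach is therefore simpler (one concentration inequality instead of two) and quantitatively sharper for this lemma, though it does not improve the final bound in Proposition~\ref{prop: alg iset}, whose error probability is governed by the weaker estimate in Lemma~\ref{lemma: tilde H conds} for $\tilde H$, where the $1$-Lipschitz trick is not available.
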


We prove Lemma~\ref{lemma: large I} in \S\ref{subsection: analysis of I}. The second lemma shows that $\tilde H$ satisfies conditions \ref{cond: tilde h vtx} and \ref{cond: tilde H avg deg} with high probability. 

\begin{lemma}\label{lemma: tilde H conds}
    $\Pr[\text{one of the conditions \ref{cond: tilde h vtx} or \ref{cond: tilde H avg deg} is violated }] = \exp\left(-\Omega\left(\dfrac{n}{d^5}\right)\right)$.
\end{lemma}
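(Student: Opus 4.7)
The plan is to compute the expectations $\E[|V(\tilde H)|]$ and $\E[e(\tilde H)]$ in closed form, bound the latter using Lemma~\ref{lemma:fewtriangles}, concentrate both random variables via Talagrand's inequality (Theorem~\ref{theorem: Talagrand}), and finally combine these facts to deduce \ref{cond: tilde h vtx} and \ref{cond: tilde H avg deg} through a union bound.

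The role of the equalizing coin flips is precisely to make the vertex inclusion probability independent of the degree: for each $v$, $\Pr[v\in K]=\gamma(1-p)$, so $\E[|V(\tilde H)|]=\gamma(1-p)n$, which is exactly the centre of the interval in \ref{cond: tilde h vtx}. For an edge $uv$, an analogous computation gives $\Pr[u,v\in K]=\gamma^2(1-p)^{-c(u,v)}$, where $c(u,v)$ denotes the number of common neighbours. The assumption $\Delta\le(1+\kappa)d$ ensures $p\Delta\le \kappa(1+\kappa)<1$, so the elementary bound $(1-p)^{-c}=1+O(pc)$ holds uniformly, and summing over edges gives
\[
\E[e(\tilde H)]\le \gamma^2\bigl(e(H)+O(p\cdot T(H))\bigr),
\]
where $T(H)$ is the number of triangles in $H$. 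Lemma~\ref{lemma:fewtriangles} gives $T(H)=O(n\Delta^{2-1/t^2})$, so the correction term is a factor $O(d^{-1/t^2})$ smaller than $e(H)$, which is negligible compared to $\beta$. Therefore $\E[e(\tilde H)]\le (1+\beta/3)\gamma^2 e(H)$.

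For concentration, view $X\coloneqq |V(\tilde H)|$ and $Y\coloneqq e(\tilde H)$ as functions of the $2n$ independent trials $\{A_v,\eq(v)\}_{v\in V(H)}$. A single trial attached to $v$ changes $X$ by at most $\Delta+1$ (only vertices of $N[v]$ can switch membership in $K$) and changes $Y$ by at most $O(\Delta^2)$ (only edges incident to $N[v]$ are affected); certifying $X\ge s$ or $Y\ge s$ uses $O(\Delta)$ trials per certified vertex or edge. Plugging into Theorem~\ref{theorem: Talagrand} with deviation $\beta\E[X]$ and $\beta\E[Y]/3$ respectively, and using $\gamma=\Omega_\kappa(1)$ (since $p\Delta\le 2\kappa$), the additive Talagrand errors of order $\Delta^{3/2}\sqrt{\E[X]}+\Delta^3$ and $\Delta^{5/2}\sqrt{\E[Y]}+\Delta^5$ are absorbed provided $n$ is larger than a fixed polynomial in $d$, which is exactly the slack afforded by $n\ge n_0$. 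The resulting failure probabilities work out to $\exp(-\Omega(n/d^5))$. Combining the two concentration bounds, with probability $1-\exp(-\Omega(n/d^5))$ we have $|V(\tilde H)|=(1\pm\beta)\gamma(1-p)n$ and $e(\tilde H)\le (1+2\beta/3)\gamma^2 e(H)$; dividing and using $1/(1-p)\le 1+\beta$ yields $d(\tilde H)\le (1+4\beta)\gamma d(H)$, and a union bound over the two bad events completes the proof.

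The main obstacle I expect is the concentration of $Y$: the single-trial variation is quadratic in $\Delta$, so Talagrand's correction terms scale as $\Delta^3$, which is precisely why the hypothesis must impose $n\ge n_0(d)$ rather than only $n\ge d_0$. A subtler point, which is where the $K_{t,t,t}$-free assumption actually enters, is that the bound on $\E[e(\tilde H)]$ would fail without control on the typical codegree—supplied here by the triangle-counting estimate of Lemma~\ref{lemma:fewtriangles}. Everything else is essentially a careful bookkeeping of constants.
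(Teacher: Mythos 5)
Your proposal is correct and follows the same overall structure as the paper: compute the exact expectations of $|V(\tilde H)|$ and $e(\tilde H)$, use Lemma~\ref{lemma:fewtriangles} to control the triangle count, concentrate both random variables via Talagrand with the same choices of Lipschitz and certificate parameters, and divide to obtain the average-degree bound. The one small variation is in bounding $\E[e(\tilde H)]=\gamma^2\sum_{uv}(1-p)^{-|N(u)\cap N(v)|}$: you apply a uniform Lipschitz bound $(1-p)^{-c}\le 1+O(pc)$ (valid since $pc\le p\Delta\le\kappa(1+\kappa)<1$ and $\gamma=\Omega(1)$), so the total codegree sum $3T(H)$ enters directly, whereas the paper splits edges into high- and low-codegree classes (threshold $d^{1-1/(5t^2)}$) and bounds each separately; both yield a relative correction of order $d^{-\Omega(1/t^2)}\ll\beta$, and your route is slightly more economical.
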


We prove Lemma~\ref{lemma: tilde H conds} in \S\ref{subsection: tilde H}.
With  Lemmas~\ref{lemma: large I} and ~\ref{lemma: tilde H conds} in hand, we can easily prove Proposition~\ref{prop: alg iset} as follows.

\begin{proof}[Proof of Proposition~\ref{prop: alg iset}]
    We have
    \begin{align*}
        &~\Pr[\text{one of the conditions \ref{cond: iset lb}--\ref{cond: tilde H avg deg} is violated}] \\
        &\leq \Pr[|I| \leq (1-2\kappa)np] + \Pr[\text{one of the conditions \ref{cond: tilde h vtx} or \ref{cond: tilde H avg deg} is violated}] \\
        &= \exp\left(-\Omega\left(\frac{\kappa^4n}{d^5}\right)\right).
    \end{align*}
    In particular, there exists an outcome $(I, \tilde H)$ of Algorithm~\ref{algorithm: isetstep} such that all conditions \ref{cond: iset lb}--\ref{cond: tilde H avg deg} are satisfied, completing the proof of Proposition~\ref{prop: alg iset}.
\end{proof}

It remains to prove Lemmas~\ref{lemma: large I} and ~\ref{lemma: tilde H conds}.

\subsection{Proof of Lemma~\ref{lemma: large I}: establishing that~\ref{cond: iset lb} holds with high probability}\label{subsection: analysis of I}

In this subsection, we will prove Lemma~\ref{lemma: large I}.
First, we show that $|A|$ is large with high probability.

\begin{lemma}\label{lemma: |A| concentration}
    $\Pr[|A| \leq (1 - pd)np] = \exp\left(-\Omega\left(\dfrac{\kappa^3n}{d}\right)\right)$.
\end{lemma}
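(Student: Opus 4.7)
The plan is to recognize that $|A|$ is a binomial random variable and then apply the Chernoff bound stated in Theorem~\ref{theorem: chernoff} directly. Indeed, since each vertex $v\in V(H)$ is included in $A$ independently with probability $p$, we have $|A|\sim\mathrm{Bin}(n,p)$ with mean $\E[|A|]=np$. The desired threshold $(1-pd)np$ corresponds to a deviation of $t\coloneqq pd\cdot np = \kappa np$ below the mean, using the identity $pd=\kappa$.

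Next, I would verify that $t \leq \E[|A|]$, which is the hypothesis needed to apply Theorem~\ref{theorem: chernoff}. This holds because $\kappa<1/3<1$, so $\kappa np \le np$. Applying the two-tailed Chernoff bound with this choice of $t$ then gives
\[
\Pr\bigl[|A|\le (1-pd)np\bigr]
\;\le\; \Pr\bigl[\,\bigl||A|-np\bigr|\ge \kappa np\,\bigr]
\;<\; 2\exp\!\left(-\frac{(\kappa np)^2}{3\,np}\right)
\;=\; 2\exp\!\left(-\frac{\kappa^2 np}{3}\right).
\]
Substituting $p=\kappa/d$ yields the exponent $-\kappa^3 n/(3d)$, which is exactly of the form $-\Omega(\kappa^3 n/d)$ claimed in the statement.

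There is no genuine obstacle here: the lemma is a one-line consequence of Chernoff because $A$ is a sum of independent Bernoulli$(p)$ indicators with no interaction coming from the structure of $H$ (unlike the analyses of $K$ and of $\tilde H$ in Lemma~\ref{lemma: tilde H conds}, which require tracking the equalizing coin flips and neighborhood interactions). The only minor bookkeeping is making the translation $pd=\kappa$ explicit so that the final exponent comes out with $\kappa^3$ rather than $\kappa^2$.
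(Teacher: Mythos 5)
Your proof is correct and follows essentially the same argument as the paper's: both recognize $|A|$ as a Binomial$(n,p)$ variable and apply the stated Chernoff bound with deviation $t=pd\cdot np=\kappa np$ to obtain the exponent $-\kappa^3 n/(3d)$. The only cosmetic difference is that you substitute $pd=\kappa$ early and also explicitly verify the hypothesis $t\le\E[|A|]$, which the paper leaves implicit.
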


\begin{proof}
    As $A$ is a $p$-random subset of $V(H)$, we have $\E[|A|] = np$.
    By the \hyperref[theorem: chernoff]{Chernoff bound} (Theorem~\ref{theorem: chernoff}), we have
    \begin{align*}
        \Pr\left[|A| \leq (1 - pd)np\right] &\leq \Pr\left[\big||A| - \E[|A|]\big| \geq pd\E[|A|]\right] \\
        &\leq 2\exp\left(-\frac{p^2d^2\E[|A|]}{3}\right) \\
        &= 2\exp\left(-\frac{np^3d^2}{3}\right) \\
        &= \exp\left(-\Omega\left(\frac{\kappa^3n}{d}\right)\right),
    \end{align*}
    as desired.
\end{proof}

Next, we show that $A$ is close to being an independent set with high probability.

\begin{lemma}\label{lemma: e(A) concentration}
    $\Pr[e(H[A]) \geq (1+p)p^2\,e(H)] = \exp\left(-\Omega\left(\dfrac{\kappa^4n}{d^5}\right)\right)$.
\end{lemma}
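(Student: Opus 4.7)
The plan is to apply Talagrand's inequality (Theorem~\ref{theorem: Talagrand}) to the random variable $X \coloneqq e(H[A])$, viewed as a function of the independent Bernoulli trials $\bbone{v \in A}$, $v \in V(H)$. Writing $X = \sum_{uv \in E(H)} \bbone{u \in A}\bbone{v \in A}$ and using linearity of expectation together with $e(H) = nd/2$, one has
\[
\E[X] = p^2 e(H) = \frac{p^2 nd}{2} = \frac{\kappa^2 n}{2d},
\]
so the event $X \geq (1+p)p^2 e(H)$ corresponds to a deviation of at least $p \cdot \E[X] = p^3 e(H) = \kappa^3 n/(2d^2)$ above the mean.

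To verify the hypotheses of Talagrand's inequality, note that changing the trial at any vertex $v$ alters $X$ by at most $\deg_H(v) \leq \Delta \leq (1+\kappa)d$, so we may take $\mu \coloneqq \Delta$. Moreover, if $X \geq s$, then the membership in $A$ of the at most $2s$ endpoints of any $s$ edges in $H[A]$ certifies $X \geq s$, so we may take $r \coloneqq 2$. I will then set the deviation parameter $t \coloneqq p^3 e(H)/2 = \kappa^3 n/(4d^2)$ in Theorem~\ref{theorem: Talagrand}. The routine check to carry out is that the additive error terms
\[
20\mu\sqrt{r\E[X]} = O\!\left(\kappa\sqrt{nd}\right) \qquad \text{and} \qquad 64\mu^2 r = O(d^2)
\]
are each at most $t = \Theta(\kappa^3 n/d^2)$; both of these hold provided $n$ is sufficiently large compared to $d$ (explicitly, $n \gg d^5/\kappa^4$), which is permitted since $n_0$ may be chosen large in terms of $d$.

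Given this, Talagrand's inequality yields
\[
\Pr\!\left[X \geq (1+p)p^2 e(H)\right] \;\le\; \Pr\!\left[X - \E[X] \geq t + 20\mu\sqrt{r\E[X]} + 64\mu^2 r\right] \;\le\; 4\exp\!\left(-\frac{t^2}{8\mu^2 r(\E[X] + t)}\right).
\]
Since $t \leq \E[X]$, the denominator satisfies $8\mu^2 r(\E[X] + t) = O(d^2 \cdot \kappa^2 n/d) = O(\kappa^2 nd)$, while $t^2 = \Theta(\kappa^6 n^2/d^4)$; hence the exponent is
\[
\Omega\!\left(\frac{\kappa^6 n^2/d^4}{\kappa^2 nd}\right) \;=\; \Omega\!\left(\frac{\kappa^4 n}{d^5}\right),
\]
which is the desired tail bound.

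The only genuine point of care is the comparison between the Talagrand error terms $20\mu\sqrt{r\E[X]}$ and $64\mu^2 r$ and the target deviation $t$; this is where the assumption $n \geq n_0(d)$ is used. In particular, this argument does not use the $K_{t,t,t}$-freeness of $H$, nor the upper bound $\Delta \leq (1+\kappa)d$ beyond the crude estimate $\Delta = O(d)$ absorbed into the constants.
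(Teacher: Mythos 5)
Your proof is correct and follows essentially the same route as the paper: both apply Talagrand's inequality to $e(H[A])$ with the vertex-membership trials, $\mu = \Theta(d)$, $r = 2$, and deviation parameter $t = p\E[X]/2$, arriving at the same $\exp(-\Omega(\kappa^4 n/d^5))$ tail. The only cosmetic difference is that you take $\mu = \Delta$ whereas the paper rounds up to $\mu = 2d$; this has no effect on the order of the bound.
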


\begin{proof}
    Note that an edge $uv \in E(H)$ is in $e(H[A])$ if and only if $u, v \in A$.
    It follows that $\E[e(H[A])] = p^2e(H)$. To prove the lemma, we apply \hyperref[theorem: Talagrand]{Talagrand’s inequality} (Theorem~\ref{theorem: Talagrand}), where the random trials correspond to whether each vertex $v \in V(H)$ is included in $A$. Changing the membership of a single vertex in $A$ can affect $e(H[A])$ by at most $\Delta \le 2d$ (by \ref{C3} and since $\kappa < 1/3$). In addition, whether an edge $uv$ lies in $e(H[A])$ depends only on the inclusion of $u$ and $v$ in $A$.
    Hence, we may apply \hyperref[theorem: Talagrand]{Talagrand's inequality} (Theorem~\ref{theorem: Talagrand}) with $\mu = 2d$, $t = p\E[e(H[A])]/2$ and $r = 2$ to obtain:
    \begin{align*}
        &~\Pr\left[e(H[A]) \geq (1+p)p^2\,e(H)\right] \\
        &\leq \Pr\left[|e(H[A]) - \E[e(H[A])]| \geq p\E[e(H[A])]\right] \\
        &\leq \Pr\left[|e(H[A]) - \E[e(H[A])]| \geq p\E[e(H[A])]/2 + 40d\sqrt{2p^2e(H)} + 512 d^2\right] \\
        &= \exp\left(-\Omega\left(\frac{p^2\E[e(H[A])]}{d^2}\right)\right) \\
        &= \exp\left(-\Omega\left(\frac{p^4e(H)}{d^2}\right)\right) \\
        &= \exp\left(-\Omega\left(\frac{\kappa^4n}{d^{5}}\right)\right),
    \end{align*}
    where in the last equality we used that $e(H) = nd/2$.
\end{proof}

By Lemmas~\ref{lemma: |A| concentration} and \ref{lemma: e(A) concentration}, with probability at least 
\[1 - \left(\exp\left(-\Omega\left(\dfrac{\kappa^3n}{d}\right)\right) + \exp\left(-\Omega\left(\dfrac{\kappa^4n}{d^5}\right)\right)\right) = 1 - \exp\left(-\Omega\left(\dfrac{\kappa^4n}{d^5}\right)\right),\]
we have
\begin{align*}
    |I| &\geq |A| - e(H[A]) \\ 
    &\geq(1 - pd)np - (1+p)p^2\,e(H) \\
    &= (1 - pd)np - \frac{(1+p)p^2nd}{2} \\
    &= np\left(1 - pd - \frac{(1+p)pd}{2}\right) \\
    &\geq (1-2pd)np,
\end{align*}
completing the proof of Lemma~\ref{lemma: large I}.

\subsection{Proof of Lemma~\ref{lemma: tilde H conds}}\label{subsection: tilde H}

Throughout this section we let $\beta = d^{-1/(20t^2)}$ as in the statement of Proposition~\ref{prop: alg iset}.

\subsubsection{Showing that \ref{cond: tilde h vtx} holds with high probability}
First, we show that $|K| = |V(\tilde H)|$ is sharply concentrated around its expected value, thereby establishing that condition~\ref{cond: tilde h vtx} holds with high probability.

\begin{lemma}\label{lemma: concentrate tilde H v}
    $\Pr[|K| \notin (1 \pm \beta)\gamma(1-p)|V(H)|] = \exp\left(-\Omega\left(\dfrac{n}{d^4}\right)\right)$.
\end{lemma}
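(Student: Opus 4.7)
The plan is to apply Talagrand's inequality (Theorem~\ref{theorem: Talagrand}) to the random variable $|K|$, viewed as a function of the independent trials $\{\bbone{v \in A}\}_{v \in V(H)}$ together with $\{\eq(v)\}_{v \in V(H)}$.

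First, I compute $\E[|K|]$. Since $\eq(v)$ is independent of the random set $A$, we have
\[\Pr[v \in K] \;=\; \Pr[\eq(v) = 1]\cdot \Pr[N[v] \cap A = \emptyset] \;=\; \frac{\gamma}{(1-p)^{\deg(v)}}\cdot (1-p)^{\deg(v)+1} \;=\; \gamma(1-p),\]
which is independent of the vertex $v$; this is precisely the purpose of the equalizing coin flip $\eq(v)$. Hence $\E[|K|] = \gamma(1-p)\,n$, and since $\gamma \geq (1-p)^{(1+\kappa)d} \geq e^{-2\kappa}$ is bounded below by a positive constant, we have $\E[|K|] = \Theta(n)$.

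Next, I verify the hypotheses of Talagrand's inequality. Changing the outcome of any single trial changes $|K|$ by at most $\Delta + 1 \leq 2d+1$: flipping $\eq(v)$ changes $|K|$ by at most one, while flipping $\bbone{v \in A}$ can affect the membership in $K$ of $v$ itself and of each of its $\deg(v) \leq \Delta$ neighbors. Thus I take $\mu = 2d+1$. For the certification parameter, each $v \in K$ is certified by the single trial $\eq(v)$ together with the $A$-trials on $N[v]$, totaling at most $\Delta + 2 \leq 2d+2$ trials per vertex; thus I take $r = 2d+2$.

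With deviation parameter $\beta \gamma (1-p) n / 2 = \Theta(\beta n)$, the additive corrections $20\mu\sqrt{r\,\E[|K|]} = O(d^{3/2}\sqrt{n})$ and $64\mu^2 r = O(d^3)$ in Talagrand's inequality are both $o(\beta n)$, provided $n$ is large enough compared to $d$ (this is exactly the hypothesis $n \geq n_0$). The resulting exponent is of order
\[\frac{\beta^2 n^2}{\mu^2 r \,\E[|K|]} \;=\; \Omega\!\left(\frac{\beta^2 n}{d^3}\right) \;=\; \Omega\!\left(\frac{n}{d^{3+1/(10t^2)}}\right) \;=\; \Omega\!\left(\frac{n}{d^4}\right),\]
where the last equality uses $t \in \N$, so $1/(10t^2) \leq 1$. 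This yields the desired bound. I do not anticipate any substantial obstacle here: the role of $\eq(v)$ is precisely to render $\Pr[v \in K]$ independent of $v$, so the only care required is the verification that the additive corrections in Talagrand are negligible, which is ensured by the size hypothesis on $n$.
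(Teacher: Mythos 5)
Your proposal is correct and follows essentially the same route as the paper: compute $\Pr[v\in K]=\gamma(1-p)$ using the equalizing coin flip, then apply Talagrand's inequality with the same Lipschitz parameter $\mu=2d+1$ and certificate parameter $r=2d+2$, absorbing the additive corrections into the deviation term thanks to $n\ge n_0$. The final simplification $\beta^2/d^3\ge 1/d^4$ (using $\beta=d^{-1/(20t^2)}$ and $t\ge 1$) matches the paper's as well.
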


\begin{proof}
    By the definition of $K$, a vertex $u \in V(H)$ belongs to $K$ if and only if $N[u] \cap A = \emptyset$ and $\eq(u) = 1$.
    As the events $u \in A$ and $\eq(u) = 1$ are mutually independent over all $u \in V(H)$, we have the following for every $v \in V(H)$.

    \[\Pr[v\in K] = (1-p)^{\deg(v) + 1}\Pr[\eq(v) = 1] = \gamma\,(1-p).\]
    Therefore, $\E[|K|] = \gamma(1-p)|V(H)| = \gamma(1-p)n$.

    To establish concentration for $|K|$, we apply \hyperref[theorem: Talagrand]{Talagrand’s inequality} (Theorem~\ref{theorem: Talagrand}).
    The random trials are the events $u \in A$ and $\eq(u) = 1$ for $u \in V(H)$. Changing whether $u \in A$ can affect $|K|$ by at most $\Delta + 1 \leq 2d + 1$.
    On the other hand, changing the outcome of an equalizing coin flip of a single vertex can affect $|K|$ by at most $1$.
    Furthermore, $u \in K$ can be certified by the events $v\not \in A$ for $v \in N[u]$ and the outcome of $\eq(u)$.
    Hence, we may apply \hyperref[theorem: Talagrand]{Talagrand's inequality} (Theorem~\ref{theorem: Talagrand}) with $\mu = 2d + 1$ and $r = 2d + 2$ to obtain:
    \begin{align*}
        &~\Pr\left[|K| \notin (1 \pm \beta)\gamma(1-p)|V(H)|\right] \\
        &= \Pr\left[\big||K| - \E[|K|]\big| \geq \beta\E[|K|]\right] \\
        &\leq \Pr\left[\big||K| - \E[|K|]\big| \geq \beta\E[|K|]/2 + 20(2d + 1)\sqrt{(2d+2)\E[|K|]} + 2000d^3\right] \\
        &= \exp\left(-\Omega\left(\frac{\beta^2\E[|K|]}{d^3}\right)\right) \\
        &= \exp\left(-\Omega\left(\frac{\beta^2\gamma(1-p)n}{d^3}\right)\right).
    \end{align*}
  The lemma now follows since $\beta^2 \ge 1/d$, $1 - p \ge 1/2$, and $\gamma > 5/9$ whenever $\kappa < 1/3$ and $d$ is sufficiently large.
\end{proof}

\subsubsection{Showing that \ref{cond: tilde H avg deg} holds with high probability}

In the remainder of this subsection, we show that condition~\ref{cond: tilde H avg deg} also holds with high probability through a sequence of lemmas presented below.

The following lemma provides crucial control over $\mathbb{E}[e(\tilde H)]$ via Lemma~\ref{lemma:fewtriangles}, which states that every $K_{t,t,t}$-free graph on $n$ vertices with maximum degree at most $\Delta$ contains at most $C(t)n\Delta^{2-1/t^2}$ triangles.

\begin{lemma}\label{lemma: expectation of edges tilde H}
    $\E[e(\tilde H)] \leq \left(1+d^{-1/(15t^2)}\right)\gamma^2e(H)$.
\end{lemma}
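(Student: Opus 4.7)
The plan is to first express $\E[e(\tilde H)]$ explicitly in terms of the local structure of $H$. For each edge $uv\in E(H)$, the event $uv\in E(\tilde H)$ occurs exactly when both $u,v\in K$, which requires $\eq(u)=\eq(v)=1$ and $(N[u]\cup N[v])\cap A=\0$. The equalizing coin flips are independent of each other and of the sampling that produces $A$, so letting $t(uv)$ denote the number of triangles of $H$ through $uv$ (equivalently, the number of common neighbours of $u$ and $v$) and using $|N[u]\cup N[v]|=\deg(u)+\deg(v)-t(uv)$, I would obtain
\[
\Pr[uv\in E(\tilde H)] \;=\; \frac{\gamma}{(1-p)^{\deg(u)}}\cdot\frac{\gamma}{(1-p)^{\deg(v)}}\cdot (1-p)^{\deg(u)+\deg(v)-t(uv)} \;=\; \frac{\gamma^2}{(1-p)^{t(uv)}}.
\]
Summing over $E(H)$ and dividing by $\gamma^2$ would reduce the lemma to the purely deterministic estimate
\[
\sum_{uv\in E(H)}(1-p)^{-t(uv)}\;\le\;\bigl(1+d^{-1/(15t^2)}\bigr)\,e(H).
\]

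To establish this estimate I would threshold the per-edge triangle count at some $\tau \coloneqq \alpha\cdot d^{1-1/(15t^2)}$ for a suitably small absolute constant $\alpha>0$. For edges with $t(uv)\le\tau$, the bound $(1-p)^{-t(uv)}\le e^{2p\,t(uv)}\le 1+4p\tau$ holds provided $\alpha$ is chosen small enough; since $p=\kappa/d$ and $\kappa<1/3$, the quantity $4p\tau$ can be made at most $\tfrac12 d^{-1/(15t^2)}$, so these edges contribute at most $\bigl(1+\tfrac12 d^{-1/(15t^2)}\bigr)e(H)$. For edges with $t(uv)>\tau$, I would use the universal bound $(1-p)^{-t(uv)}\le (1-p)^{-\Delta}=1/\gamma$, which is $O(1)$ for $\kappa<1/3$ and $d$ large, combined with the counting inequality
\[
\bigl|\{uv\in E(H):t(uv)>\tau\}\bigr|\;\le\;\frac{3\cdot(\text{number of triangles in }H)}{\tau}\;\le\;\frac{3C(t)\,n\,\Delta^{2-1/t^2}}{\tau},
\]
the last step being exactly Lemma~\ref{lemma:fewtriangles}. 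Substituting $\tau$ and using $\Delta\le (1+\kappa)d\le 2d$ yields a bound of order $nd^{1-1/t^2+1/(15t^2)}=nd^{1-14/(15t^2)}$ on the number of high-triangle edges. Since $e(H)=nd/2$, this is $o\bigl(d^{-1/(15t^2)}\,e(H)\bigr)$, so the high-triangle contribution is comfortably dominated by the remaining $\tfrac12 d^{-1/(15t^2)}e(H)$ of slack. Adding the two contributions gives the desired inequality.

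The entire argument reduces to the global Tur\'an-type bound of Lemma~\ref{lemma:fewtriangles}, which is the only non-routine ingredient and the unique place where the $K_{t,t,t}$-freeness of $H$ is used. Without it one would be forced to use only $t(uv)\le \Delta$, making $(1-p)^{-t(uv)}$ uniformly as large as $1/\gamma$ and inflating the expectation by a multiplicative constant—far from the $1+o(1)$ factor required. Since the polynomial savings $d^{-1/t^2}$ provided by Lemma~\ref{lemma:fewtriangles} easily dominate the target slack $d^{-1/(15t^2)}$, there is plenty of room to absorb the absolute constants hidden in the threshold and in the Taylor expansion, so no delicate optimisation is required.
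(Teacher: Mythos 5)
Your proposal is correct and follows essentially the same argument as the paper: compute $\Pr[uv\in E(\tilde H)]=\gamma^2(1-p)^{-|N(u)\cap N(v)|}$, split the edges by thresholding the codegree $|N(u)\cap N(v)|$, bound the low-codegree edges by a Taylor expansion, and bound the number of high-codegree edges via the triangle count from Lemma~\ref{lemma:fewtriangles} together with the uniform bound $(1-p)^{-\Delta}=1/\gamma=O(1)$. The only cosmetic difference is your choice of threshold ($\alpha d^{1-1/(15t^2)}$ versus the paper's $d^{1-1/(5t^2)}$), which makes no substantive difference since, as you note, the polynomial saving from Lemma~\ref{lemma:fewtriangles} comfortably dominates the target slack.
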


\begin{proof}\stepcounter{ForClaims}\renewcommand{\theForClaims}{\ref*{lemma: expectation of edges tilde H}}
    First, we have
    \begin{align*}
        \E[e(\tilde H)] &= \sum_{uv \in e(H)}\Pr[u, v \in K] \\
        &= \sum_{uv \in e(H)}(1 - p)^{\deg(u) + \deg(v) - |N(u) \cap N(v)|}\Pr[\eq(u) = \eq(v) = 1] \\
        &= \gamma^2\sum_{uv \in e(H)}(1 - p)^{-|N(u) \cap N(v)|}.
    \end{align*}
  
    For the remainder of the proof, we define two subsets of edges of $H$ that partition $E(H)$.
    \[\Bad \coloneqq \left\{uv \in e(H)\,:\, |N(u) \cap N(v)| \geq d^{1 - 1/(5t^2)}\right\}, \qquad \Good \coloneqq e(H) \setminus \Bad.\]
    The following claim provides a bound on $|\Bad|$:

    \begin{claim}\label{claim: size of bad}
        $|\Bad| \leq nd^{1-3/(10t^2)}$
    \end{claim}

    \begin{claimproof}
        Note that 
        \[\sum_{uv \in e(H)}|N(u)\cap N(v)| = 3T(H),\]
        where $T(H)$ is the number of triangles in $H$.
        Since $H$ is $K_{t,t,t}$-free and satisfies $\Delta(H) \le 2d$, 
Lemma~\ref{lemma:fewtriangles} implies that 
\[
T(H) \le Cn(2d)^{2 - 1/t^2},
\]
for some constant $C = C(t)$.
 It follows that
        \[|\Bad|d^{1-1/(5t^2)} \leq 3Cn(2d)^{2-1/t^2} \implies |\Bad| \leq nd^{1 - 3/(10t^2)},\]
        for $d$ sufficiently large in terms of $t$. This proves the claim.
    \end{claimproof}

    With Claim~\ref{claim: size of bad} in hand, we have
    \begin{align*}
        \E[e(\tilde H)] &= \gamma^2\left(\sum_{uv\in \Good}(1 - p)^{-|N(u) \cap N(v)|} + \sum_{uv\in \Bad}(1 - p)^{-|N(u) \cap N(v)|}\right) \\
        &\leq \gamma^2\left(\sum_{uv\in \Good}(1 - p)^{-d^{1-1/(5t^2)}} + \sum_{uv\in \Bad}(1 - p)^{-\Delta}\right) \\
        &\leq \gamma^2\left(\frac{e(H)}{(1 - p)^{d^{1-1/(5t^2)}}} + \frac{nd^{1 - 3/(10t^2)}}{(1 - p)^{\Delta}}\right) \\
        &\leq \gamma^2e(H)\left(\frac{1}{(1 - p)^{d^{1-1/(5t^2)}}} + \frac{2d^{- 3/(10t^2)}}{(1 - p)^{\Delta}}\right) \\
        &\leq \gamma^2e(H) \left(1 + d^{- 1/(5t^2)} + 4d^{- 3/(10t^2)}\right) \\
        &\leq \left(1+d^{-1/(15t^2)}\right)\gamma^2e(H),
    \end{align*}
    where we used the fact that $\kappa < 1/3$, $nd = 2e(H)$, and $d$ is sufficiently large in terms of $t$.
\end{proof}

In the following lemma, we show that with high probability $e(\tilde H)$ is not too large.

\begin{lemma}\label{lemma: concentration of edges}
    $\Pr[e(\tilde H) \geq (1+\beta)\gamma^2e(H)] = \exp\left(-\Omega\left(\dfrac{n}{d^5}\right)\right)$.
\end{lemma}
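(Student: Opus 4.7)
The plan is to apply \hyperref[theorem: Talagrand]{Talagrand's inequality} (Theorem~\ref{theorem: Talagrand}) to concentrate $e(\tilde H)$ about its expectation, and combine this with the upper bound on $\E[e(\tilde H)]$ from Lemma~\ref{lemma: expectation of edges tilde H}. The underlying independent trials are the $2n$ random decisions: for each $v \in V(H)$, whether $v \in A$ and the outcome of $\eq(v)$.

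I would first verify Talagrand's hypotheses. Flipping whether $v \in A$ can alter $K$-membership only for vertices in $N[v]$ (at most $\Delta+1 \le 2d+1$ vertices), and altering a single vertex's $K$-membership changes $e(\tilde H)$ by at most $\Delta \le 2d$; hence such a flip changes $e(\tilde H)$ by at most $(\Delta+1)\Delta \le 5d^2$. Flipping $\eq(v)$ affects only whether $v \in K$, and so changes $e(\tilde H)$ by at most $\Delta \le 2d$. Therefore we may take $\mu = 5d^2$. For certification, each edge $uv \in E(\tilde H)$ is witnessed by the events $\eq(u)=\eq(v)=1$ together with $w \notin A$ for every $w \in N[u] \cup N[v]$, a total of at most $2 + 2(\Delta+1) \le 5d$ trials, so $r = 5d$ suffices.

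Next, Lemma~\ref{lemma: expectation of edges tilde H} gives $\E[e(\tilde H)] \le (1 + d^{-1/(15t^2)})\gamma^2 e(H)$. Since $1/(15t^2) > 1/(20t^2)$, for $d$ large enough we have $d^{-1/(15t^2)} \le \beta/2$, so the event $e(\tilde H) \ge (1+\beta)\gamma^2 e(H)$ forces a deviation $e(\tilde H) - \E[e(\tilde H)] \ge (\beta/2)\gamma^2 e(H) = \Theta(\beta \gamma^2 n d)$. Taking this quantity (or a constant fraction of it) as the Talagrand deviation $t$, the slack terms $20\mu\sqrt{r\E[e(\tilde H)]} = O(d^{3}\gamma\sqrt{n})$ and $64\mu^2 r = O(d^5)$ are both $o(t)$ once $n$ is sufficiently large in terms of $d$ (concretely, $n \gg d^4/(\beta^2 \gamma^2)$). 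Talagrand's inequality then yields
\[
\Pr\bigl[e(\tilde H) \ge (1+\beta)\gamma^2 e(H)\bigr] \;\le\; \exp\!\left(-\Omega\!\left(\frac{t^2}{\mu^2 r\,\E[e(\tilde H)]}\right)\right) \;=\; \exp\!\left(-\Omega\!\left(\frac{\beta^2 \gamma^2 n}{d^4}\right)\right).
\]
Since $\gamma^2 = \Omega(1)$ (as $\gamma = (1-\kappa/d)^{\Delta}$ with $\kappa < 1/3$) and $\beta^2 = d^{-1/(10t^2)}$, the exponent is $\Omega(n/d^{4+1/(10t^2)})$, which dominates $n/d^5$ for $d$ large in terms of $t$, giving the claimed bound.

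The main technical point is the Lipschitz constant: unlike in Lemma~\ref{lemma: concentrate tilde H v}, where a single $A$-flip changes $|K|$ by only $O(d)$, here the edge count within $K$ can shift by $O(d^2)$ because each of the $O(d)$ cascading $K$-changes carries up to $\Delta = O(d)$ incident edges. This quadratic sensitivity is precisely what produces the $1/d^5$ rate in the target probability (rather than the sharper $1/d^3$ rate of Lemma~\ref{lemma: concentrate tilde H v}); the remainder of the argument is just Talagrand bookkeeping, requiring care to confirm that $n$ is large enough for the additive slack terms to be dominated by $t$.
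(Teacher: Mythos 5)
Your proof is correct and takes essentially the same route as the paper: you apply Talagrand's inequality with the random trials being the $A$-membership and equalizing coin flips, argue a Lipschitz constant of order $d^2$ and a certification size of order $d$, absorb the slack terms using $n$ large in terms of $d$, and combine with the expectation bound from Lemma~\ref{lemma: expectation of edges tilde H}. The only differences are minor numerical constants in $\mu$ and $r$ (you use $5d^2$, $5d$; the paper uses $4d^2$, $4d+2$), which do not affect the order of the resulting exponent.
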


\begin{proof}
    We prove the lemma using \hyperref[theorem: Talagrand]{Talagrand’s inequality} (Theorem~\ref{theorem: Talagrand}).
    The random trials correspond to the events ${u \in A}$ and ${\eq(u) = 1}$ for each $u \in V(H)$.
    Changing whether a vertex belongs to $A$ or altering the outcome of its equalizing coin flip can affect $e(\tilde H)$ by at most $\Delta^2 \le 4d^2$.
    Moreover, each event $uv \in e(\tilde H)$ can be certified by the events $x\not \in A$ for all $x \in N(u) \cup N(v)$ together with the outcomes of $\eq(u)$ and $\eq(v)$—a total of at most $2\Delta + 2 \le 4d + 2$ events.
    Hence, we may apply \hyperref[theorem: Talagrand]{Talagrand’s inequality} (Theorem~\ref{theorem: Talagrand}) with parameters $\mu = 4d^2$ and $r = 4d + 2$ to obtain:
    \begin{align*}
        &~\Pr[e(\tilde H) \geq (1+\beta)\gamma^2e(H)] \\
        &\leq \Pr\left[e(\tilde H) \geq (\beta - d^{-1/(15t^2)})\gamma^2e(H) + \E[e(\tilde H)]\right] \\
        &\leq \Pr\left[\big|e(\tilde H) - \E[e(\tilde H)]\big| \geq (\beta - d^{-1/(15t^2)})\gamma^2e(H)\right] \\
        &\leq \Pr\left[\big|e(\tilde H) - \E[e(\tilde H)]\big| \geq (\beta - d^{-1/(15t^2)})\gamma^2e(H)/2 + 80d^2\sqrt{(4d+2)\E[e(\tilde H)]} + 6000d^5\right] \\
        &= 2\exp\left(-\Omega\left(\frac{(\beta - d^{-1/(15t^2)})^2\gamma^2e(H)}{d^5}\right)\right),
    \end{align*}
    where we used Lemma~\ref{lemma: expectation of edges tilde H} and that $n$ is sufficiently large.
    The lemma now follows since $(\beta - d^{-1/(15t^2)})^2 \ge 1/d$, $e(H) = nd/2$ and $\gamma > 5/9$ whenever $\kappa < 1/3$.
\end{proof}

We now combine Lemmas~\ref{lemma: concentrate tilde H v} and~\ref{lemma: concentration of edges} to show that \ref{cond: tilde H avg deg} holds with high probability.

\begin{lemma}\label{lemma: tilde G and G avg deg}
    $\Pr[d(\tilde H) \geq (1 + 4\beta)\gamma d(H)] = \exp\left(-\Omega\left(\dfrac{n}{d^5}\right)\right)$.
\end{lemma}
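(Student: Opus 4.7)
The plan is to derive the result directly from the two concentration lemmas already established, namely Lemma~\ref{lemma: concentrate tilde H v} (which controls $|V(\tilde H)| = |K|$) and Lemma~\ref{lemma: concentration of edges} (which controls $e(\tilde H)$), via a union bound together with a short deterministic calculation. Since $d(\tilde H) = 2e(\tilde H)/|K|$ whenever $|K| \ge 1$, any deviation of $d(\tilde H)$ above $(1+4\beta)\gamma d(H)$ must be witnessed either by $e(\tilde H)$ being too large or by $|K|$ being too small.

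More concretely, I would define the two bad events
\[
\mathcal{E}_1 \coloneqq \bigl\{|K| < (1-\beta)\gamma(1-p)n\bigr\},
\qquad
\mathcal{E}_2 \coloneqq \bigl\{e(\tilde H) > (1+\beta)\gamma^2 e(H)\bigr\}.
\]
By Lemma~\ref{lemma: concentrate tilde H v} and Lemma~\ref{lemma: concentration of edges} respectively,
\[
\Pr[\mathcal{E}_1] = \exp\!\left(-\Omega(n/d^4)\right),
\qquad
\Pr[\mathcal{E}_2] = \exp\!\left(-\Omega(n/d^5)\right),
\]
so by a union bound the probability that either occurs is $\exp(-\Omega(n/d^5))$. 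It therefore suffices to show that, outside of $\mathcal{E}_1 \cup \mathcal{E}_2$, the inequality $d(\tilde H) \le (1+4\beta)\gamma d(H)$ holds deterministically.

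Assuming neither bad event occurs, and using $2e(H) = n\,d(H)$, we compute
\[
d(\tilde H) \;=\; \frac{2e(\tilde H)}{|K|}
\;\le\; \frac{2(1+\beta)\gamma^2 e(H)}{(1-\beta)\gamma(1-p)n}
\;=\; \frac{(1+\beta)}{(1-\beta)(1-p)}\,\gamma\,d(H).
\]
Thus the whole claim reduces to verifying the elementary inequality
\[
\frac{1+\beta}{(1-\beta)(1-p)} \;\le\; 1 + 4\beta,
\]
under the assumptions $\beta = d^{-1/(20t^2)}$ and $p = \kappa/d < 1/d$. Since $d$ is taken sufficiently large in terms of $\varepsilon$ and $t$, we have $p \ll \beta$, so $(1-\beta)(1-p) \ge 1 - \beta - p \ge 1 - 2\beta$, and hence $(1+\beta)/((1-\beta)(1-p)) \le (1+\beta)/(1-2\beta) \le 1 + 4\beta$ for $\beta$ small enough. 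This last inequality is a routine check and is not expected to cause any difficulty; all the real probabilistic work has already been done in Lemmas~\ref{lemma: concentrate tilde H v} and~\ref{lemma: concentration of edges}. The dominant error term in the union bound is the one coming from Lemma~\ref{lemma: concentration of edges}, which is why the final probability bound is $\exp(-\Omega(n/d^5))$ rather than $\exp(-\Omega(n/d^4))$.
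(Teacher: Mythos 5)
Your proposal is correct and matches the paper's own proof: both apply a union bound to Lemmas~\ref{lemma: concentrate tilde H v} and~\ref{lemma: concentration of edges} and then observe deterministically that $d(\tilde H) = 2e(\tilde H)/|V(\tilde H)| \le \frac{2(1+\beta)\gamma^2 e(H)}{(1-\beta)\gamma(1-p)n} \le (1+4\beta)\gamma d(H)$. The paper states this final inequality without spelling out the elementary check $(1+\beta)/((1-\beta)(1-p)) \le 1+4\beta$ using $p \ll \beta$, which you have made explicit, but the argument is the same.
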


\begin{proof}
    By Lemmas~\ref{lemma: concentrate tilde H v} and \ref{lemma: concentration of edges}, we have
    \[d(\tilde H) =  \frac{2e(\tilde H)}{|V(\tilde H)|} \leq \frac{2(1 + \beta)\gamma^2 e(H)}{(1 - \beta) \gamma(1-p)n} \leq (1 + 4\beta)\gamma d(H),\]
    with probability at least 
    \[1 - \exp\left(-\Omega\left(\dfrac{n}{d^4}\right)\right) - \exp\left(-\Omega\left(\dfrac{n}{d^5}\right)\right) \,=\, 1 - \exp\left(-\Omega\left(\dfrac{n}{d^5}\right)\right),\]
    completing the proof of the lemma.
\end{proof}

It is easy to see that Lemma~\ref{lemma: tilde H conds} now follows from Lemmas~\ref{lemma: concentrate tilde H v} and \ref{lemma: tilde G and G avg deg}, as desired.

\section{Concluding Remarks}\label{section: concluding remarks}

We conclude with a few potential avenues for future work.
In this paper, we introduced a new variant of the R\"odl nibble method in order to prove the lower bound on $\alpha(G)$ in Theorem~\ref{theorem: iset} (see Algorithm~\ref{algorithm: nibble} in \S\ref{section: ISET proof}).
The key technical novelty of this nibble procedure is that it alternates between two complementary steps: a \emph{cleaning} step and a \emph{nibble} step.
At its core, these steps work as follows:
\begin{itemize}
    \item if the input structure is ``good'', we perform the nibble step to extract a desired substructure, and
    \item if the input structure is ``bad'', we perform the cleaning step to transform the input into a ``better'' structure while carefully tracking its effect on the desired substructure.
\end{itemize}
In our application, the structure is a $K_{t, t, t}$-free graph, the substructure is an independent set, and the input graph is ``good'' if the maximum degree is not much larger than the average degree.
This procedure provides a template to tackle other problems, so it is of independent interest.

We note that our proof of Theorem~\ref{theorem: coloring} does not immediately extend to the list/correspondence coloring setting, and we leave it as an open problem.
Additionally, the upper bound on $c_2(F)$ in Theorem~\ref{theorem: general} is far from the lower bound of $1/2$ due to Bollob\'as \cite{BollobasIndependence}.
In \cite{anderson2025coloring}, Anderson, Bernshteyn, and 
the first named author of this manuscript 
asked whether the constant $c_2(F)$ is independent of $F$; they proved that $c_2(F) \leq 4$ for $F \subseteq K_{1, t, t}$ (see Table~\ref{table:bounds}).
In light of Theorem~\ref{theorem: iset}, we pose the following conjecture:

\begin{conjecture}\label{conj: Ktt AKS const 1}
    For any $3$-colorable graph $F$, every $F$-free graph $G$ with maximum degree $\Delta$ satisfies
\[
\chi(G) \le (1+o(1))\,\frac{\Delta}{\log \Delta}.
\]
\end{conjecture}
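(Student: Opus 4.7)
The plan is to adapt the nibble procedure from Section~\ref{section: ISET proof} to the coloring setting by iteratively extracting large independent sets and assigning each a fresh color. By Theorem~\ref{theorem: iset}, each stage produces an independent set of size $(1-o(1))|V(G_k)|\log d(G_k)/d(G_k)$; using it as a new color class, deleting it, and repeating should—in the ideal situation where $d(G_k)$ tracks $\Delta(G_k) \approx \Delta$ throughout—exhaust $V(G)$ within $(1+o(1))\Delta/\log\Delta$ rounds, matching the bound in Conjecture~\ref{conj: Ktt AKS const 1}.

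Concretely, I would first reduce to a near-regular setting via standard partitioning—for instance, splitting $V(G)$ into $O(1)$ parts such that each induced subgraph has average degree within $(1+o(1))$ of its maximum degree—so that $d(G_0) \approx \Delta(G_0)$ at the start. The main loop would then proceed as follows. At each step $k$, if $G_k$ remains nearly regular, I extract an independent set $I_k$ via a constructive variant of Theorem~\ref{theorem: iset} (for instance, by derandomizing Algorithm~\ref{algorithm: nibble} through the method of conditional expectations), use $I_k$ as a color class, and set $G_{k+1} \coloneqq G_k - I_k$. If instead $\Delta(G_k) \gg d(G_k)$, I invoke a cleaning subroutine analogous to step~\ref{DJM2} of Algorithm~\ref{algorithm: nibble}, moving high-degree vertices into a reserve pool $R$ and continuing on the cleaner graph. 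The reserve $R$ would ultimately be colored using asymptotically negligibly many additional colors, either greedily or by recursing on $G[R]$. The bookkeeping is inspired by Lemma~\ref{lemma: lb on n/d and n}: one would track ratios $|V(G_k)|/d(G_k)$ together with a cumulative product analogous to the $R_i$ of that lemma, and balance the number of nibble versus cleaning rounds.

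The main obstacle is controlling the joint evolution of $\Delta(G_k)$ and $d(G_k)$ over the $\sim \Delta/\log\Delta$ iterations. Unlike in the independent set setting, where a single nibble run suffices, here one must iterate the nibble many times; each iteration can alter the residual degree distribution unpredictably, and in particular may drive $\Delta(G_k)/d(G_k)$ arbitrarily high after only a handful of stages (high-degree vertices survive many rounds, while typical vertices quickly shed degree). The cleaning step must therefore be substantially more refined than in Algorithm~\ref{algorithm: nibble}: it must handle not just isolated outliers but entire dense pockets that emerge over time, all without exceeding the color budget on the reserve $R$. An alternative route would be a direct entropy-compression argument in the spirit of Molloy~\cite{Molloy}, using the global triangle bound of Lemma~\ref{lemma:fewtriangles} in place of the pointwise local sparsity available in the triangle-free case; designing both the random coloring procedure and its compression witness to exploit only averaged sparsity appears to be the principal technical challenge. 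Either way, pushing the leading constant of Theorem~\ref{theorem: coloring} from $O(|V(F)|^6)$ down to exactly $1$ for all $3$-colorable $F$ seems to demand a genuinely new ingredient beyond the techniques developed in this paper.
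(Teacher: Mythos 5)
This statement is a \emph{conjecture}, posed in the concluding remarks of the paper as an open problem; there is no proof of it anywhere in the paper, so there is nothing to compare your attempt against. Your write-up is really a strategy sketch rather than a proof, and you correctly land on the right conclusion in your last paragraph: the techniques in the paper do not yield the conjecture, and a new idea appears to be needed. That assessment matches the paper's own discussion in \S\ref{section: concluding remarks}, where the authors explain why a direct adaptation of Algorithm~\ref{algorithm: nibble} to the list/correspondence-coloring nibble fails. Their framing is slightly different from yours: they emphasize the mismatch between the \emph{global} control that the cleaning step in Algorithm~\ref{algorithm: nibble} provides (via the scalar ratio $N_i/D_i$) and the inherently \emph{local} quantities (per-vertex, per-color color-degrees $d_L(v,c)$) that the coloring nibble needs to track, whereas you emphasize the difficulty of controlling $\Delta(G_k)/d(G_k)$ across many rounds. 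Both are facets of the same obstruction.

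One concrete point worth flagging about your first paragraph: the naive scheme of repeatedly extracting an independent set of size $(1-o(1))\,n_k \log d_k/d_k$ and assigning it a new color does not ``exhaust $V(G)$ within $(1+o(1))\Delta/\log\Delta$ rounds,'' even in the best case where $d_k$ stays close to $\Delta$. Each round removes only a fraction $\approx \log\Delta/\Delta$ of the \emph{current} vertex set, so after $T$ rounds roughly $n(1-\log\Delta/\Delta)^T$ vertices survive; to empty the graph you need $T = \Omega(\Delta\log n/\log\Delta)$, picking up a spurious $\log n$ factor. The only way to avoid this is to argue that $d_k$ itself decays geometrically alongside $n_k$, which requires exactly the degree control you identify as the obstacle — and which is not guaranteed by $\alpha(G')\ge(1-o(1))n'\log d'/d'$ alone, since the extracted independent set could consist of low-degree vertices and leave the average degree unchanged or even increased. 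So the first-paragraph outline would not yield even a weaker bound without substantial additional input; the honest position, which you arrive at, is that the conjecture remains open.
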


The above is known to hold for cycles, fans, and bipartite graphs $F$ (see Table~\ref{table:bounds}).
We suspect that the ideas developed in this paper could be used to resolve the conjecture for $F \subseteq K_{1,t,t}$; however, it is not clear how to adapt the approach directly.
Indeed, let us consider the general template discussed earlier. 
For the discussion that follows, we assume the reader is familiar with the coloring variant of the nibble method introduced by Kim in~\cite{Kim95}\footnote{See~\cite[Ch.~12]{MolloyReed} for a textbook exposition of the method, and~\cite{AndersonBernshteynDhawan, anderson2025coloring, alon2020palette, PS15} for more modern treatments.}.
Before describing the challenges in adapting our approach, we begin by introducing a few definitions. 
Let $G$ be a graph, and let $L$ be a list assignment for the vertices of $G$.
For each $v \in V(G)$ and $c \in L(v)$, the \emph{color-degree} of $c$ with respect to $v$ is defined as
\[
d_L(v, c) = \bigl|\{u \in N(v) : c \in L(u)\}\bigr|,
\]
and the \emph{average color-degree} at $v$ is
\[
d_L(v) = \frac{1}{|L(v)|} \sum_{c \in L(v)} d_L(v, c).
\]
To apply our template to the list-coloring problem, the structure is a $K_{1,t,t}$-free graph $G$ equipped with a list assignment $L$, the substructure is a partial $L$-coloring, and a pair $(G,L)$ is said to be \emph{good} if, for every vertex $v$, the maximum color-degree is not much larger than the average color-degree. In our setting, the cleaning step affects the global ratio $n/d$, whereas in the coloring variant the analogous quantity is $\ell/d$, where $\ell \coloneqq \min_v |L(v)|$ and $d \coloneqq \max_{v,c} d_L(v,c)$. Removing a vertex in Algorithm~\ref{algorithm: nibble} has a clear and predictable effect on the global parameter $n/d$. By contrast, deleting a ``high color-degree'' color from the list of a vertex $v$ changes the local ratio $|L(v)|/d_L(v)$ but may leave the global quantity $\ell/d$ unaffected.

In summary, the inherently local nature of the coloring variant of the nibble method prevents a direct adaptation of our approach, which depends on global control. Nevertheless, we believe the ideas developed here shed light on this problem and may represent progress toward a conjecture of Cambie and Kang~\cite{cambie2022independent}, which proposes a color-degree analogue of Molloy’s theorem on the list chromatic number of $K_3$-free graphs\footnote{The conjecture is stated in terms of correspondence colorings; see~\cite{bernshteyn2019johansson} for an extension of Molloy’s result to correspondence coloring.}~\cite{Molloy}. Additionally, we note that proving $c_1(K_{t, t, t}) < 1$ is very difficult even for $t = 1$ (see, e.g., \cite{Zdeborova, Achlioptas} and the discussion at the end of \S\ref{subsection: prior work}). 

\vspace{2mm}

\subsection*{Acknowledgments} We are very grateful to Anton Bernshteyn, Ross Kang, and Tom Kelly for their comments on a preliminary version of this paper.

\printbibliography

@BOOK{MolloyReed,
    AUTHOR = "Molloy, M. and Reed, B.",
    TITLE = "{Graph Colourings and the Probabilistic Method}",
    PUBLISHER = "Springer",
    YEAR = "2002",
}

@misc{sparsegraphs2024,
  author       = {Ross Kang},
  title        = {Sparse Graphs Coalition: 2024 Session 1},
  howpublished = {\url{https://sparse-graphs.mimuw.edu.pl/doku.php?id=sessions:2024sessions:2024session1}},
  year         = {2024},
  note         = {Problem session}
}

@incollection{KangKelly2023nibble,
  author    = {Dong Yeap Kang and Tom Kelly and Daniela Kühn and Abhishek Methuku and Deryk Osthus},
  title     = {Graph and hypergraph colouring via nibble methods: A survey},
  booktitle = {Proceedings of the 8th European Congress of Mathematics},
  pages     = {771--823},
  year      = {2023},
  publisher = {EMS Press},
  doi       = {10.4171/8ECM/11}
}

@article{Shearer1983,
  author    = {Shearer, James B.},
  title     = {A note on the independence number of triangle-free graphs},
  journal   = {Discrete Mathematics},
  volume    = {46},
  year      = {1983},
  pages     = {83--87},
}

@article{PS15,
    author = {S. Pettie and H.-H. Su},
    title = {Distributed coloring algorithms for triangle-free graphs},
    journaltitle = {Information and Computation},
    date = {2015},
    volume = {243},
    pages = {263--280},
}

@article{anderson2025coloring,
  title={Coloring graphs with forbidden almost bipartite subgraphs},
  author={Anderson, James and Bernshteyn, Anton and Dhawan, Abhishek},
  journal={Random Structures \& Algorithms},
  volume={66},
  number={4},
  pages={e70012},
  year={2025},
  publisher={Wiley Online Library}
}

@article{alon1996independence,
  title={Independence numbers of locally sparse graphs and a {R}amsey type problem},
  author={Alon, Noga},
  fjournal={Random Structures \& Algorithms},
  journal   = {Random Structures Algorithms},
  volume={9},
  number={3},
  pages={271--278},
  year={1996},
  publisher={Wiley Online Library}
}

@article{alon2020palette,
    author = {N. Alon and S. Assadi},
    title = {Palette sparsification beyond $(\Delta + 1)$ vertex coloring},
    journaltitle = {Approximation, Randomization, and Combinatorial Optimization. Algorithms and Techniques (APPROX/RANDOM)},
    date = {2020},
    pages = {\#6},
    addendum = {Full version: \url{https://arxiv.org/abs/2006.10456}},
}

@unpublished{anderson2024coloring,
  title={Coloring locally sparse graphs},
  author={Anderson, James and Dhawan, Abhishek and Kuchukova, Aiya},
  howpublished= {\url{https://arxiv.org/abs/2402.19271} (preprint)},
  year={2024}
}

@unpublished{dhawan2024palette,
  title={Palette Sparsification for Graphs with Sparse Neighborhoods},
  author={Dhawan, Abhishek},
  howpublished= {\url{https://arxiv.org/abs/2408.08256} (preprint)},
  year={2024}
}

@article{Molloy,
    author = {M. Molloy},
    title = {The list chromatic number of graphs with small clique number},
    journaltitle = {J. Combin. Theory},
    series = {B},
    volume = {134},
    pages = {264--284},
    date = {2019},
}

@article{AKSConjecture,
    AUTHOR = "N. Alon and M. Krivelevich and B. Sudakov",
    TITLE = "{Coloring graphs with sparse neighborhoods}",
    JOURNAL = "J. Combin. Theory",
    series = {B},
    YEAR = "1999",
    volume = {77},
    pages = {73--82},
}

@article{bernshteyn2019johansson,
  title={The Johansson-Molloy theorem for DP-coloring},
  author={A. Bernshteyn},
  journal={Random Structures \& Algorithms},
  volume={54},
  number={4},
  pages={653--664},
  year={2019},
  publisher={Wiley Online Library}
}

@unpublished{DKPS,
    author = {E. Davies and R.J. Kang and F. Pirot and J.-S. Sereni},
    title = {Graph structure via local occupancy},
    howpublished = {\url{https://arxiv.org/abs/2003.14361} (preprint)},
    date = {2020},
}

@article{Vizing,
    author = {V. G. Vizing},
    title = {On an estimate of the chromatic class of a p-graph},
    journaltitle = {Diskret. Analiz},
    volume = {3},
    pages = {25--30},
    year = {1964},
}

@report{Joh_triangle,
	author = {A. Johansson},
	title = {Asymptotic choice number for triangle free graphs},
	type = {Technical Report 91--95},
	institution = {DIMACS},
	date = {1996},
}

@thesis{Jamall,
	author = {M.S. Jamall},
	title = {Coloring Triangle-Free Graphs and Network Games},
	type = {Ph.D. Thesis},
	institution = {University of California, San Diego},
	date = {2011},
	location = {San Diego},
}

@article{molloy2014colouring,
  title={Colouring graphs when the number of colours is almost the maximum degree},
  author={M. Molloy and B. Reed},
  journal={Journal of Combinatorial Theory, Series B},
  volume={109},
  pages={134--195},
  year={2014},
  publisher={Elsevier}
}

@unpublished{campos2023new,
  title={A new lower bound for sphere packing},
  author={Campos, Marcelo and Jenssen, Matthew and Michelen, Marcus and Sahasrabudhe, Julian},
  howpublished= {\url{https://arxiv.org/abs/2312.10026} (preprint)},
  year={2023}
}

@article{Kim95,
	author = {J.H. Kim},
	title = {On Brooks' Theorem for sparse graphs},
	journaltitle = {Comb. Probab. Comput.},
	date = {1995},
	volume = {4},
	pages = {97--132},
}

@article{davies2018average,
  title={On the average size of independent sets in triangle-free graphs},
  author={Davies, Ewan and Jenssen, Matthew and Perkins, Will and Roberts, Barnaby},
  journal={Proceedings of the American Mathematical Society},
  volume={146},
  number={1},
  pages={111--124},
  year={2018}
}

@article{shearer1983note,
  title={A note on the independence number of triangle-free graphs},
  author={Shearer, James B},
  journal={Discrete Mathematics},
  volume={46},
  number={1},
  pages={83--87},
  year={1983},
  publisher={Elsevier}
}

@article{shearer1995independence,
  title={On the independence number of sparse graphs},
  author={Shearer, James B},
  journal={Random Structures \& Algorithms},
  volume={7},
  number={3},
  pages={269--271},
  year={1995},
  publisher={Wiley Online Library}
}

@article{Achlioptas,
	author = {D. Achlioptas and A. Coja-Oghlan},
	title = {Algorithmic barriers from phase transitions},
	journaltitle = {IEEE Symposium on Foundations of Computer Science (FOCS)},
	date = {2008},
	pages = {793--802},
	addendum = {Full version: \url{https://arxiv.org/abs/0803.2122}},
}

@article{RV,
	author = {M. Rahman and B. Vir{\'{a}}g},
	title = {Local algorithms for independent sets are half-optimal},
	journaltitle = {Ann. Probab.},
	volume = {45},
	number = {3},
	pages = {1543--1577},
	date = {2017},
}

@article{wein2020optimal,
  title={Optimal low-degree hardness of maximum independent set},
  author={Wein, Alexander S},
  journal={Mathematical Statistics and Learning},
  volume={4},
  number={3},
  pages={221--251},
  year={2022}
}

@article{karp1976probabilistic,
  title={The probabilistic analysis of some combinatorial search algorithms.},
  author={Karp, Richard M.},
  year={1976}
}

@unpublished{buys2025triangle,
  title={Triangle-free graphs with the fewest independent sets},
  author={Buys, Pjotr and Heuvel, Jan van den and Kang, Ross J},
  howpublished= {\url{https://arxiv.org/abs/2503.10002} (preprint)},
  year={2025}
}

@article{ajtai1980note,
  title={A note on Ramsey numbers},
  author={M. Ajtai and J. Koml{\'o}s and E. Szemer{\'e}di},
  journal={Journal of Combinatorial Theory, Series A},
  volume={29},
  number={3},
  pages={354--360},
  year={1980},
  publisher={Elsevier}
}

@article{ajtai1981turan,
  title={On Tur{\'a}n’s theorem for sparse graphs},
  author={M. Ajtai and P. Erd{\H{o}}s and J. Koml{\'o}s and E. Szemer{\'e}di},
  journal={Combinatorica},
  volume={1},
  pages={313--317},
  year={1981},
  publisher={Springer}
}

@article{dhawan2025bounds,
  title={Bounds for the Independence and Chromatic Numbers of Locally Sparse Graphs},
  author={Dhawan, Abhishek},
  journal={Annals of Combinatorics},
  pages={1--28},
  year={2025},
  publisher={Springer}
}

@inproceedings{KovariSosTuran,
  title={On a problem of Zarankiewicz},
  author={K{\H{o}}v{\'a}ri, P and T S{\'o}s, Vera and Tur{\'a}n, P{\'a}l},
  booktitle={Colloquium Mathematicum},
  volume={3},
  pages={50--57},
  year={1954},
  organization={Polska Akademia Nauk}
}

@unpublished{J96-Kr,
author = {Johansson, Anders},
title = {The choice number of sparse graphs},
note = {Unpublished Manuscript},
year = {1996}
}

@article{BollobasIndependence,
	author = {B. Bollob{\'{a}}s},
	title = {The independence ratio of regular graphs},
	journaltitle = {Proc. Amer. Math. Soc.},
	date = {1981},
	volume = {83},
	number = {2},
	pages = {433--436},
}

@article{AndersonBernshteynDhawan,
  title={Coloring graphs with forbidden bipartite subgraphs},
  author={Anderson, James and Bernshteyn, Anton and Dhawan, Abhishek},
  journal={Combinatorics, Probability and Computing},
  volume={32},
  number={1},
  pages={45--67},
  year={2023},
  publisher={Cambridge University Press}
}

@article{bonamy2022bounding,
  title={Bounding $\chi$ by a fraction of $\Delta$ for graphs without large cliques},
  author={Bonamy, Marthe and Kelly, Tom and Nelson, Peter and Postle, Luke},
  journal={Journal of Combinatorial Theory, Series B},
  volume={157},
  pages={263--282},
  year={2022},
  publisher={Elsevier}
}

@unpublished{davies2025hard,
  title={The hard-core model in graph theory},
  author={Davies, Ewan and Kang, Ross J},
  howpublished= {\url{https://arxiv.org/abs/2501.03379} (preprint)},
  year={2025}
}

@inproceedings{martinsson2025random,
  title={Random independent sets in triangle-free graphs},
  author={Martinsson, Anders and Steiner, Raphael},
  booktitle={Forum of Mathematics, Sigma},
  volume={13},
  pages={e156},
  year={2025},
  organization={Cambridge University Press}
}

@unpublished{gamarnik2025optimal,
  title={Optimal Hardness of Online Algorithms for Large Independent Sets},
  author={Gamarnik, David and K{\i}z{\i}lda{\u{g}}, Eren C and Warnke, Lutz},
  howpublished= {\url{https://arxiv.org/abs/2504.11450} (preprint)},
  year={2025}
}

@unpublished{bradshaw2025toward,
  title={Toward Vu's conjecture},
  author={Bradshaw, Peter and Dhawan, Abhishek and Methuku, Abhishek and Wigal, Michael C},
  howpublished= {\url{https://arxiv.org/abs/2508.16818} (preprint)},
  year={2025}
}

@article{coja2015independent,
  title={On independent sets in random graphs},
  author={Coja-Oghlan, Amin and Efthymiou, Charilaos},
  journal={Random Structures \& Algorithms},
  volume={47},
  number={3},
  pages={436--486},
  year={2015},
  publisher={Wiley Online Library}
}

@article{Zdeborova,
	author = {L. Zdeborov\'a and F. Krzakala},
	title = {Phase transitions in the coloring of random graphs},
	journaltitle = {Phys. Rev. E},
	date = {2007},
	volume = {76},
	pages = {031131},
}

@article{cambie2022independent,
  title={Independent transversals in bipartite correspondence-covers},
  author={Cambie, Stijn and Kang, Ross J},
  journal={Canadian Mathematical Bulletin},
  volume={65},
  number={4},
  pages={882--894},
  year={2022},
  publisher={Canadian Mathematical Society}
}

@article{bhatnagar2016decay,
  title={Decay of correlations for the hardcore model on the d-regular random graph},
  author={Bhatnagar, Nayantara and Sly, Allan and Tetali, Prasad},
  journal={Electronic Journal of Probability},
  volume={21},
  year={2016},
  publisher={Institute of Mathematical Statistics}
}

@inproceedings{weitz2006counting,
  title={Counting independent sets up to the tree threshold},
  author={Weitz, Dror},
  booktitle={Proceedings of the thirty-eighth annual ACM symposium on Theory of computing},
  pages={140--149},
  year={2006}
}

@article{galvin2006slow,
  title={Slow mixing of Glauber dynamics for the hard-core model on regular bipartite graphs},
  author={Galvin, David and Tetali, Prasad},
  journal={Random Structures \& Algorithms},
  volume={28},
  number={4},
  pages={427--443},
  year={2006},
  publisher={Wiley Online Library}
}

\end{document}